  \definecolor{verydarkblue}{rgb}{0,0,0.5}
\newtheorem{sat}{Theorem}[section]		
\newtheorem{lem}[sat]{Lemma}
\newtheorem{kor}[sat]{Corollary}			
\newtheorem{prop}[sat]{Proposition}
\newtheorem{defi}{Definition}
\newtheorem*{defi*}{Definition}			
\newtheorem*{bei*}{Example}
\newtheorem*{sat*}{Theorem}				
\newtheorem*{kor*}{Corollary}
\newtheorem*{rmk*}{Remark}				
\newtheorem*{quest*}{Question}	
\newtheorem*{claim}{Claim}	
\let\ssection=\section
\renewcommand{\section}{\setcounter{equation}{0}\ssection}
\newtheorem*{namedtheorem}{\theoremname}
\newcommand{\theoremname}{testing}
\newenvironment{named}[1]{\renewcommand{\theoremname}{#1}\begin{namedtheorem}}{\end{namedtheorem}}
\theoremstyle{remark}
\newtheorem*{bem}{Remark}
\newtheorem*{namedtheoremr}{\theoremnamer}
\newcommand{\theoremnamer}{testing}
\newcommand{\BR}{\mathbb R}			
\newcommand{\BN}{\mathbb N}			
			\newcommand{\BZ}{\mathbb Z}
\newcommand{\BI}{\mathbb I}
\newcommand{\CA}{\mathcal A}
\newcommand{\CG}{\mathcal G}		
		\newcommand{\CL}{\mathcal L}
\newcommand{\CM}{\mathcal M}		
		\newcommand{\CP}{\mathcal P}
\newcommand{\CS}{\mathcal S}		\newcommand{\CT}{\mathcal T}
\newcommand{\actson}{\curvearrowright}
\newcommand{\D}{\partial}
\DeclareMathOperator{\Map}{Map}
\newcommand{\comment}[1]{}
\DeclareMathOperator{\Homeo}{Homeo}
\DeclareMathOperator{\const}{const}
\DeclareMathOperator{\supp}{supp}
\newcommand{\fsubd}{\mathrel{{\scriptstyle\searrow}\kern-1ex^d\kern0.5ex}}
\newcommand{\bsubd}{\mathrel{{\scriptstyle\swarrow}\kern-1.6ex^d\kern0.8ex}}
\renewcommand{\epsilon}{\varepsilon}
\renewcommand{\le}{\leqslant}
\renewcommand{\ge}{\geqslant}
\renewcommand{\emptyset}{\varnothing}
\begin{document}

\title[]{Mapping class group orbit closures for non-orientable surfaces}
  \author[V. Erlandsson]{Viveka Erlandsson}
  \address{School of Mathematics, University of Bristol \\ Bristol BS8 1UG, UK {\rm and}  \newline ${ }$ \hspace{0.2cm} Department of Mathematics and Statistics, UiT The Arctic University of  \newline ${ }$ \hspace{0.2cm} Norway}
  \email{v.erlandsson@bristol.ac.uk}
  \thanks{The first and third authors gratefully acknowledge support from EPSRC grant EP/T015926/1.}
    \author[M. Gendulphe]{Matthieu Gendulphe}
    \address{}
  \email{matthieu@gendulphe.com}
  \author[I. Pasquinelli]{Irene Pasquinelli}
  \address{School of Mathematics, University of Bristol \\ Bristol BS8 1UG, UK}
  \email{irene.pasquinelli@bristol.ac.uk}
\author[J. Souto]{Juan Souto}
\address{UNIV RENNES, CNRS, IRMAR - UMR 6625, F-35000 RENNES, FRANCE}
\email{jsoutoc@gmail.com}

\begin{abstract}
Let $S$ be a connected non-orientable surface with negative Euler characteristic and of finite type. We describe the possible closures in $\CM\CL$ and $\CP\CM\CL$ of the mapping class group orbits of measured laminations, projective measured laminations and points in Teichm\"uller space. In particular we obtain a characterization of the closure in $\CM\CL$ of the set of weighted two-sided curves. 
\end{abstract}

\maketitle

\section{Introduction}

In this paper we study the closures of the orbits of the action of the mapping class group of a complete connected yperbolic surface on the space of measured laminations, projective measured laminations and Teichm\"uller space. When reading this the reader might well be surprised. They might be thinking that all of this is already known; that it is a classical results that the action of the mapping class group on $\CP\CM\CL$ is minimal in the sense that all orbits are dense \cite{FLP}, that $\CP\CM\CL$ is the limit set of the action of the mapping class group on Teichm\"uller space, and that the orbit closures of the action of $\Map$ on $\CM\CL$ were already described by Mirzakhani and Lindenstrauss \cite{Lindenstrauss-Mirzakhani}. And the reader would be correct if they restricted themselves to orientable surfaces. Things are actually quite different in the non-orientable world.

For starters, it is due to Scharlemann \cite{Scharlemann} and Danthony-Nogueira \cite{Danthony-Nogueira} that the set of (projective) measured laminations which have a one-sided closed leaf is open and has full measure, where a simple closed essential curve is {\em one-sided} if it has a regular neighborhood homeomorphic to a M\"obius band---otherwise it is {\em two-sided}. Since the set of measured laminations without one-sided component is also mapping class group invariant, we get in particular that the action $\Map(S)\actson\CP\CM\CL(S)$ is not minimal whenever $S$ is non-orientable: the set
$$\CP\CM\CL^+(S)=\{\lambda\in\CP\CM\CL(S)\text{ without closed one-sided components}\}$$
is a non-empty, closed, invariant proper subset. Our first result is that the action of the mapping class group on $\CP\CM\CL^+(S)$ is minimal, or rather that this set is the unique closed minimal subset of $\CP\CM\CL(S)$:

\begin{sat}\label{thm minimal set}
  Let $S$ be a connected, possibly non-orientable, non-exceptional hyperbolic surface of finite topological type. The set $\CP\CM\CL^+(S)$ is the unique non-empty closed subset of $\CP\CM\CL(S)$ which is invariant and minimal under the action of $\Map(S)$.
\end{sat}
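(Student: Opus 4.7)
The plan is to prove the stronger statement that $\overline{\Map(S)\cdot\lambda}\supseteq\CP\CM\CL^+(S)$ for every $\lambda\in\CP\CM\CL(S)$. Applied to $\lambda\in\CP\CM\CL^+(S)$, this gives $\overline{\Map(S)\cdot\lambda}=\CP\CM\CL^+(S)$ (by closedness and invariance of the right-hand side), which is the minimality of $\CP\CM\CL^+(S)$; applied to arbitrary $\lambda$, it shows that every non-empty closed $\Map(S)$-invariant subset contains $\CP\CM\CL^+(S)$, so no other minimal subset can exist. The set $\CP\CM\CL^+(S)$ is itself non-empty, $\Map(S)$-invariant, and closed: its complement is open by the Scharlemann--Danthony-Nogueira result cited above, and filling laminations provide plenty of elements.

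I would approach the inclusion via pseudo-Anosov dynamics. For any pseudo-Anosov $\phi\in\Map(S)$, the attracting and repelling laminations $\lambda_\pm(\phi)\in\CP\CM\CL(S)$ are filling, hence lie in $\CP\CM\CL^+(S)$, and $\phi^n\nu\to\lambda_+(\phi)$ in $\CP\CM\CL(S)$ for every $\nu\ne\lambda_-(\phi)$. Thus, given a target $\mu\in\CP\CM\CL^+(S)$, it suffices to exhibit pseudo-Anosovs $\phi_k\in\Map(S)$ with $\lambda_+(\phi_k)\to\mu$ and $\lambda_-(\phi_k)\ne\lambda$: taking sufficiently large powers of $\phi_k$ applied to $\lambda$ then produces a sequence in $\Map(S)\cdot\lambda$ converging to $\mu$. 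The side condition $\lambda_-(\phi_k)\ne\lambda$ is harmless because distinct pseudo-Anosovs have distinct pairs of fixed points in $\CP\CM\CL(S)$, so along any sequence with pairwise distinct repelling laminations at most one can coincide with $\lambda$; alternatively one first replaces $\lambda$ by a $\Map(S)$-translate avoiding the finitely many relevant repelling laminations, which does not affect the orbit closure.

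The main obstacle is therefore the density of attracting laminations of pseudo-Anosovs in $\CP\CM\CL^+(S)$. On orientable surfaces this is a classical theorem proved by Thurston and Penner: one carries the target $\mu$ on a train track $\tau$ and builds a pseudo-Anosov preserving $\tau$ as a high power of a suitable product of Dehn twists along curves dual to $\tau$, so that the attracting lamination is carried by $\tau$ with weights given by a Perron--Frobenius eigenvector and approximates $\mu$ to arbitrary accuracy. To transport this to $S$, I would pass to the orientation double cover $\pi:\tilde S\to S$ with deck involution $\sigma$. Pullback produces a $\Map(S)$-equivariant homeomorphism $\iota:\CP\CM\CL(S)\to\CP\CM\CL(\tilde S)^\sigma$ onto the $\sigma$-fixed set, under which $\CP\CM\CL^+(S)$ corresponds to $\sigma$-invariant laminations without $\sigma$-invariant closed leaf, and $\Map(S)$ is realized inside the centralizer of $\sigma$ in $\Map(\tilde S)$; pseudo-Anosovs on $S$ correspond to $\sigma$-equivariant pseudo-Anosovs on $\tilde S$. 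The plan is then to run the Thurston--Penner construction $\sigma$-equivariantly: start from a $\sigma$-invariant train track on $\tilde S$ carrying the lift $\tilde\mu$ and Dehn twist along $\sigma$-orbits of dual curves, so that the resulting equivariant pseudo-Anosov descends to $S$ with attracting lamination approximating $\mu$. Verifying that such equivariant train-track and curve-system configurations exist and that the associated transition matrix is irreducible---so the construction really produces a pseudo-Anosov, not a reducible map---is the main technical point of the argument.
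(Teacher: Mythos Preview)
Your reduction is sound and, in fact, mirrors the paper's own logic: the paper deduces Theorem~\ref{thm minimal set} from Theorem~\ref{thm orbit closures PML}, whose proof amounts precisely to showing that every $\Map(S)$-orbit closure in $\CP\CM\CL(S)$ contains $\CP\CM\CL^+(S)$, and the mechanism is again pseudo-Anosov (or Dehn-twist) dynamics pushing $\lambda$ toward elements of $\CP\CM\CL^+(S)$. So the architecture of your argument is correct.

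The gap is in the step you flag yourself as ``the main technical point'': density of pseudo-Anosov attracting laminations in $\CP\CM\CL^+(S)$. Passing to the orientation cover buys you nothing here. Since the deck involution $\sigma$ is orientation-reversing, one has $\sigma D_\gamma\sigma^{-1}=D_{\sigma\gamma}^{-1}$, so the only $\sigma$-equivariant multitwists are products $D_\gamma D_{\sigma\gamma}^{-1}$ along $\sigma$-orbits $\{\gamma,\sigma\gamma\}$ with $\gamma\cap\sigma\gamma=\emptyset$; these are exactly the lifts of Dehn twists along two-sided curves on $S$. Your ``equivariant Penner construction'' therefore reduces to running Penner on $S$ itself with two-sided twist curves, and for the attractor to approximate a given $\mu\in\CP\CM\CL^+(S)$ you need two-sided curves carried by arbitrarily fine train tracks carrying $\mu$. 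That is precisely the content of Theorem~\ref{main theorem} together with Proposition~\ref{prop one-vertex tt}, and it is where all the work in the paper lies (uniform train tracks, Theorem~\ref{prop lm}, the quantified Scharlemann Lemma~\ref{lem quantified Scharlemann}). The introduction makes this explicit: if one knew a priori that filling uniquely ergodic laminations---a fortiori pseudo-Anosov attractors---were dense in $\CP\CM\CL^+(S)$, ``everything we do here could be done in 5 to 10 pages.'' Your proposal assumes exactly this density without supplying an independent argument for it.
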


In Theorem \ref{thm minimal set}, as in the remaining of the paper, we say that a hyperbolic surface $S$ is {\em exceptional} if it is either a pair of pants or non-orientable with $\chi(S) = -1$. Otherwise the surface is {\em non-exceptional}. The non-orientable exceptional surfaces 
are the two-holed projective plane, the one-holed Klein bottle, and the connected sum of three projective planes. The mapping class group, the spaces of measured laminations and projective measured laminations are well understood if the surface is exceptional, and the interested reader will have no difficulty to clarify matters on their own for those cases. In any case we discuss briefly exceptional surfaces in section \ref{subsec exceptional} below.
\medskip

The closure of the mapping class group orbit of a two-sided curve $\gamma$ is a closed invariant subset of $\CP\CM\CL^+$. Theorem \ref{thm minimal set} implies thus that $\CP\CM\CL^+(S)=\overline{\Map(S)\cdot\gamma}$. It follows a fortiori that the set of projective classes of two-sided curves is dense in $\CP\CM\CL^+(S)$. This answers a question which seems to have been around for some time \cite{Bestvina}:

\begin{sat}\label{main theorem}
  Let $S$ be a connected, possibly non-orientable, non-exceptional hyperbolic surface of finite topological type. The set of two-sided curves is dense in $\CP\CM\CL^+(S)$.
\end{sat}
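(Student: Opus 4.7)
The plan is to deduce Theorem \ref{main theorem} directly from Theorem \ref{thm minimal set}, which does essentially all of the work. First, I would check that there exists at least one two-sided simple closed essential curve $\gamma$ on $S$: this is routine once $S$ is non-exceptional (for instance, one can take the boundary of a regular neighborhood of a one-sided curve, or any essential curve contained in an orientable subsurface of $S$). Its projective class $[\gamma]$ trivially lies in $\CP\CM\CL^+(S)$ since the only closed leaf of $\gamma$ is $\gamma$ itself, which is two-sided.

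Now consider the orbit closure
\[
X \;=\; \overline{\Map(S)\cdot[\gamma]} \;\subset\; \CP\CM\CL(S).
\]
Since $\CP\CM\CL^+(S)$ is closed and $\Map(S)$-invariant, and contains $[\gamma]$, one inclusion is automatic:
\[
X \;\subseteq\; \CP\CM\CL^+(S).
\]
For the reverse inclusion, $X$ is a non-empty closed $\Map(S)$-invariant subset of the compact space $\CP\CM\CL(S)$, so by Zorn's lemma it contains some closed invariant minimal subset. Theorem \ref{thm minimal set} asserts that $\CP\CM\CL^+(S)$ is the unique such minimal subset, and hence
\[
\CP\CM\CL^+(S) \;\subseteq\; X.
\]

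Combining both inclusions gives $X = \CP\CM\CL^+(S)$, that is, the $\Map(S)$-orbit of the single two-sided curve $[\gamma]$ is already dense in $\CP\CM\CL^+(S)$. Since homeomorphisms of $S$ send a regular neighborhood of a curve to a regular neighborhood of the image curve, the mapping class group preserves the property of being two-sided; so every element of this orbit is the projective class of a two-sided curve. Thus the set of two-sided curves is dense in $\CP\CM\CL^+(S)$, as claimed.

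I do not expect any substantive obstacle: once Theorem \ref{thm minimal set} is in hand, the result is essentially a one-line corollary. The only auxiliary facts needed are the compactness of $\CP\CM\CL(S)$ (ensuring minimal subsets exist), invariance of $\CP\CM\CL^+(S)$ under $\Map(S)$ (immediate from the topological definition of one-sidedness), and the existence of at least one two-sided simple closed curve on any non-exceptional surface.
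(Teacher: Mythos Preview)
Your deduction of Theorem \ref{main theorem} from Theorem \ref{thm minimal set} is logically valid, and indeed the paper itself observes this implication in the introduction. The problem is that in the paper's logical architecture the dependence runs the other way: Theorem \ref{thm minimal set} is obtained (in Section \ref{sec:orbit closures PML}) as an immediate consequence of Theorem \ref{thm orbit closures PML}, which in turn rests on Theorem \ref{thm orbit closures ML}, whose proof (specifically Proposition \ref{prop orbit closure ML no atoms}) invokes Theorem \ref{main theorem} to know that weighted two-sided curves are dense in $\CM\CL^+(S)$. So using Theorem \ref{thm minimal set} here would be circular. The authors flag this explicitly at the start of Section \ref{sec:two-sided}: ``although we might have given the impression that Theorem \ref{main theorem} was a corollary of Theorem \ref{thm minimal set}, it is rather the other way around.''

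The paper's actual proof of Theorem \ref{main theorem} is where the real work happens: one takes the ergodic decomposition of a given $\mu\in\CM\CL^+(S)$, uses Theorem \ref{prop lm} to produce disjoint sub-train tracks $\tau_j^\epsilon$ of a uniform train track, one for each ergodic component, and then must show each $\tau_j^\epsilon$ carries a two-sided curve. This last step is the crux, handled via Proposition \ref{prop one-vertex tt} (characterizing one-vertex train tracks that carry no two-sided curve) together with the quantified Scharlemann Lemma \ref{lem quantified Scharlemann}.

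A minor side remark: your first proposed example of a two-sided curve, the boundary of a regular neighborhood of a one-sided curve, is not essential (it bounds a M\"obius band), so it does not count as two-sided in the paper's conventions. Lemma \ref{lem filling pair} is the clean way to produce a two-sided curve on a non-exceptional surface.
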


If Theorem \ref{main theorem} follows from Theorem \ref{thm minimal set} because $\CP\CM\CL^+$ is minimal, the fact that it is actually the unique non-empty closed minimal subset of $\CP\CM\CL(S)$ implies that $\CP\CM\CL^+(S)$ is contained in the closure of every orbit $\lambda\in\CP\CM\CL(S)$. To describe the actual closure of $\Map(S)\cdot\lambda$ for an arbitrary $\lambda$ we need to introduce some notation.

Following Lindenstrauss-Mirzakhani \cite{Lindenstrauss-Mirzakhani} we consider the decomposition $\lambda=\gamma_\lambda+\lambda'$ where $\gamma_\lambda$ is the atomic part of $\lambda$, and let $R_\lambda$ be the union of those connected components of $S\setminus\gamma_\lambda$ which contain a non-compact leaf of $\lambda$. The pair $(R_\lambda,\gamma_\lambda)$ is, in the terminology of \cite{Lindenstrauss-Mirzakhani}, a {\em complete pair}. We associate to the pair $(R_\lambda,\gamma_\lambda)$ first the set $\gamma_\lambda+\CM\CL^+(R_\lambda)$ of measured laminations of the form $\gamma_\lambda+\mu$ where $\mu\in\CM\CL^+(R_\lambda)$ is a measured lamination supported by $R_\lambda$ and without one-sided leafs, and then the orbit of this set under the mapping class group action:
$$\CG_\lambda=\cup_{\phi\in\Map(S)}\phi\big(\gamma_\lambda+\CM\CL^+(R_\lambda)\big)\subset\CM\CL(S).$$
Unsurprisingly we denote by $\CP\CG_\lambda\subset\CP\CM\CL(S)$ the image of $\CG_\lambda$ in the space of projective measured laminations.

With all this notation in place we can give a precise description for the closure of $\Map(S)\cdot\lambda$ in $\CP\CM\CL$ for an arbitrary $\lambda$.

\begin{sat}\label{thm orbit closures PML}
    Let $S$ be a connected, possibly non-orientable, non-exceptional hyperbolic surface of finite topological type. We have $\overline{\Map(S)\cdot\lambda}=\CP\CG_\lambda\cup\CP\CM\CL^+(S)$ for any projective measured lamination $\lambda\in\CP\CM\CL(S)$.
\end{sat}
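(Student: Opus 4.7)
The plan is to prove the two inclusions $\overline{\Map(S)\cdot\lambda}\supseteq\CP\CG_\lambda\cup\CP\CM\CL^+(S)$ and $\overline{\Map(S)\cdot\lambda}\subseteq\CP\CG_\lambda\cup\CP\CM\CL^+(S)$ separately. The first inclusion splits into two parts: $\CP\CM\CL^+(S)\subseteq\overline{\Map(S)\cdot\lambda}$ is immediate from Theorem \ref{thm minimal set}, since the orbit closure is a non-empty closed $\Map(S)$-invariant subset. For $\CP\CG_\lambda\subseteq\overline{\Map(S)\cdot\lambda}$, by $\Map(S)$-invariance it suffices to produce, for each $\mu\in\CM\CL^+(R_\lambda)$, a sequence $\psi_n\in\Map(R_\lambda,\partial R_\lambda)\subseteq\Map(S)$ (fixing $\gamma_\lambda$) with $\psi_n(\lambda)=\gamma_\lambda+\psi_n(\lambda')\to\gamma_\lambda+\mu$ in $\CM\CL(S)$. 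I would apply Theorem \ref{thm minimal set} inductively to each component of $R_\lambda$ (whose topological complexity is strictly smaller, with exceptional components treated separately) to obtain projective density of $\Map(R_\lambda)\cdot[\lambda']$ in $\CP\CM\CL^+(R_\lambda)$. The delicate point here is that one needs to arrange $\psi_n(\lambda')\to\mu$ not just projectively but in $\CM\CL(R_\lambda)$, which can be done by a careful choice of representative within each projective class, using suitable scaling classes (e.g.\ powers of subsurface pseudo-Anosovs) to match norms.

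For the reverse inclusion, suppose $\phi_n(\lambda)\to[\nu]$ in $\CP\CM\CL(S)$ and choose $t_n>0$ with $t_n\phi_n(\lambda)\to\tilde\nu\in\CM\CL(S)\setminus\{0\}$. I would split $t_n\phi_n(\lambda)=t_n\phi_n(\gamma_\lambda)+t_n\phi_n(\lambda')$; both summands are bounded above by $t_n\phi_n(\lambda)$ as positive measures, so by compactness and subsequencing they converge separately to give $t_n\phi_n(\gamma_\lambda)\to\eta$ and $t_n\phi_n(\lambda')\to\rho$ in $\CM\CL(S)$ with $\tilde\nu=\eta+\rho$. The crucial input is that each $\phi_n(\lambda')\in\CM\CL^+(S)$ (since $\lambda'\in\CM\CL^+(R_\lambda)\subseteq\CM\CL^+(S)$ and the mapping class group preserves $\CM\CL^+(S)$), and $\CM\CL^+(S)$ is closed in $\CM\CL(S)$ by the Scharlemann--Danthony--Nogueira result, so $\rho\in\CM\CL^+(S)$. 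The dichotomy is whether $\eta=0$: if so, then $\tilde\nu=\rho\in\CM\CL^+(S)$ and $[\nu]\in\CP\CM\CL^+(S)$, finishing this case. If $\eta\neq 0$, I pass to a further subsequence to stabilize the underlying components of $\phi_n(\gamma_\lambda)$; composing with a fixed mapping class sending the stabilized multicurve back to $\gamma_\lambda$, I may assume $\phi_n$ preserves $\gamma_\lambda$ setwise, apply the induction to $R_\lambda$ to place the continuous part of $\rho$ in $\CM\CL^+(R_\lambda)$, and identify $[\tilde\nu]$ as an element of $\CP\CG_\lambda$.

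The hardest step I expect is the analysis in the case $\eta\neq 0$. One must carefully decompose $\rho$ into its part supported on $R_\lambda$ and its possibly nonzero part supported on $\gamma_\lambda$ arising from mass collapsing in the limit, combine the latter with $\eta$ to form the atomic piece of $\tilde\nu$ on $\gamma_\lambda$, realize the subsequentially constant permutation of the components of $\gamma_\lambda$ induced by $\phi_n$ by a genuine mapping class, and then match weights so as to write $\tilde\nu$ in the canonical form $\phi(c\gamma_\lambda+c\mu')$ defining $\CG_\lambda$. Further bookkeeping is required to handle possibly exceptional connected components of $R_\lambda$ and to ensure the induction on complexity is well-founded at each stage.
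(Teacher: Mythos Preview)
Your proposal has a \textbf{circularity problem}: you invoke Theorem \ref{thm minimal set} to get $\CP\CM\CL^+(S)\subset\overline{\Map(S)\cdot\lambda}$, but in the paper's logical order Theorem \ref{thm minimal set} is deduced \emph{from} Theorem \ref{thm orbit closures PML}, not before it. The paper instead obtains $\CP\CM\CL^+(S)\subset\overline{\Map(S)\cdot\lambda}$ by noting that the Dehn-twist argument inside Proposition \ref{prop orbit closure ML no atoms} shows every projective two-sided curve lies in the orbit closure, and then appealing to Theorem \ref{main theorem}. Similarly, for $\CP\CG_\lambda\subset\overline{\Map(S)\cdot\lambda}$ the paper simply quotes Theorem \ref{thm orbit closures ML} (already proved), rather than trying to upgrade a projective statement on subsurfaces to convergence in $\CM\CL$; your sketch for that upgrade is essentially a reproof of Theorem \ref{thm orbit closures ML} and you have not supplied one.

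For the inclusion $\overline{\Map(S)\cdot\lambda}\subset\CP\CG_\lambda\cup\CP\CM\CL^+(S)$, your direct limit analysis is in the right spirit but the $\eta\neq 0$ case has a genuine gap. You claim one can ``stabilize the underlying components of $\phi_n(\gamma_\lambda)$'', but if $t_n\to 0$ while $\ell(\phi_n(\gamma_\lambda))\to\infty$ the underlying curves need not stabilize at all; $\eta$ could be an arbitrary lamination (e.g.\ the stable lamination of a pseudo-Anosov) rather than a multicurve. The correct dichotomy is on whether the limit $\tilde\nu$ has a one-sided atom: if not, $\tilde\nu\in\CM\CL^+(S)$; if so, Scharlemann's stability of one-sided atoms forces that atom to come from the multicurve part with weight bounded below, whence $t_n$ is bounded away from $0$, the sequence actually converges in $\CM\CL(S)$, and one finishes via closedness of $\CG_\lambda$ (Lemma \ref{l closed}). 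This is exactly the content of Lemma \ref{lem really bad notation}, and once that lemma is in hand the inclusion $\subset$ is a one-liner: $\Map(S)\cdot\lambda\subset\CP\CG_\lambda$ and the target is closed. The paper's route is thus considerably shorter and avoids the induction-on-complexity bookkeeping you anticipate needing.
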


The reader might be wondering what happens if we consider the action of the mapping class group on $\CM\CL(S)$ instead. In fact, even if this question had not crossed their mind it would have to be considered: understanding orbit closures for the action $\Map(S)\actson\CM\CL(S)$ in an integral part of the proof of Theorem \ref{thm orbit closures PML}. We prove:

\begin{sat}\label{thm orbit closures ML}
  Let $S$ be a connected, possibly non-orientable, non-exceptional hyperbolic surface of finite topological type. We have $\overline{\Map(S)\cdot\lambda}=\CG_\lambda$ for any measured lamination $\lambda\in\CM\CL(S)$.
\end{sat}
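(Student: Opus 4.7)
The plan is to prove the two inclusions separately, reducing the main content of the $(\supseteq)$ direction to a ``filling atom-free'' core case on which Theorem~\ref{thm minimal set} (and, on orientable components, the result of Lindenstrauss--Mirzakhani) can be applied.

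For $\overline{\Map(S)\cdot\lambda}\subseteq\CG_\lambda$, consider $\phi_n\in\Map(S)$ with $\phi_n\lambda\to\mu$ and decompose $\lambda=\gamma_\lambda+\lambda'$. Additivity of intersection number on disjointly supported laminations yields $i(\phi_n\gamma_\lambda,\alpha)\le i(\phi_n\lambda,\alpha)\to i(\mu,\alpha)$ for every simple closed curve $\alpha$, so the sequence $\phi_n\gamma_\lambda$ lies in a compact subset of $\CM\CL(S)$. Since weighted multicurves with prescribed positive weights form a discrete subset of $\CM\CL(S)$, passing to a subsequence makes $\phi_n\gamma_\lambda$ eventually constant. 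Fix $N$ large and set $\alpha_n:=\phi_N^{-1}\phi_n\in\Stab(\gamma_\lambda)$ for $n\ge N$; a further subsequence guarantees that the $\alpha_n$ all induce the same permutation on the components of $\gamma_\lambda$, and absorbing its inverse into $\phi_N$ we may assume that each $\alpha_n$ preserves every component of $\gamma_\lambda$ (and hence of $R_\lambda$) setwise. Then $\alpha_n\lambda=\gamma_\lambda+\alpha_n\lambda'$ with $\alpha_n\lambda'\in\CM\CL^+(R_\lambda)$, and $\alpha_n\lambda\to\phi_N^{-1}\mu$. Since $\CM\CL^+(R_\lambda)$ is closed in $\CM\CL(S)$---its complement, laminations with a one-sided closed leaf in $R_\lambda$, being open by Scharlemann \cite{Scharlemann} and Danthony--Nogueira \cite{Danthony-Nogueira}---the limit of $\alpha_n\lambda'$ lies in $\CM\CL^+(R_\lambda)$, so $\phi_N^{-1}\mu\in\gamma_\lambda+\CM\CL^+(R_\lambda)$ and $\mu\in\CG_\lambda$.

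For the reverse inclusion $\CG_\lambda\subseteq\overline{\Map(S)\cdot\lambda}$, the $\Map(S)$-invariance of the left-hand side reduces matters to showing $\gamma_\lambda+\nu\in\overline{\Map(S)\cdot\lambda}$ for every $\nu\in\CM\CL^+(R_\lambda)$. The restriction map $\Stab(\gamma_\lambda)\to\prod_R\Map(R)$ (product over connected components $R$ of $R_\lambda$, modulo boundary Dehn twists which act trivially on $\CM\CL(R)$) has sufficiently large image to realize any product of orbits, so we reduce, component by component, to the following core statement: if $\lambda^0\in\CM\CL^+(R)$ is atom-free and fills $R$, then $\overline{\Map(R)\cdot\lambda^0}\supseteq\CM\CL^+(R)$. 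When $R$ is orientable this is due to Lindenstrauss--Mirzakhani \cite{Lindenstrauss-Mirzakhani}. When $R$ is non-orientable, Theorem~\ref{thm minimal set} produces, for any given target $\nu^0\in\CM\CL^+(R)$, a sequence $\alpha_n\in\Map(R)$ with $[\alpha_n\lambda^0]\to[\nu^0]$ in $\CP\CM\CL^+(R)$.

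The delicate step is to promote projective convergence to honest convergence in $\CM\CL$: one has $t_n\alpha_n\lambda^0\to\nu^0$ for positive scalars $t_n$, which must be absorbed into the choice of $\alpha_n$. We expect to resolve this either by lifting to the orientation double cover $\tilde R\to R$, where $\lambda^0$ corresponds to a lamination invariant under the deck involution $\iota$ and the orientable Lindenstrauss--Mirzakhani theorem on $\tilde R$ produces the desired sequence which descends by $\iota$-equivariance, or by exploiting the abundance of pseudo-Anosov elements of $\Map(R)$ with distinct dilatations to precompose $\alpha_n$ with suitable pseudo-Anosov powers that absorb $t_n$ at the cost of only a vanishing projective perturbation. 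This scaling upgrade is where the bulk of the work for Theorem~\ref{thm orbit closures ML} lies.
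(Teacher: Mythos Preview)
Your argument for $\overline{\Map(S)\cdot\lambda}\subseteq\CG_\lambda$ is essentially the content of Lemma~\ref{l closed} and is fine.

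For the reverse inclusion there are two serious problems. First, your appeal to Theorem~\ref{thm minimal set} is circular in the logical structure of this paper: Theorem~\ref{thm minimal set} is \emph{deduced from} Theorem~\ref{thm orbit closures ML} (via Theorem~\ref{thm orbit closures PML}) in Section~\ref{sec:orbit closures PML}, not the other way around. The paper's actual proof of the core statement (Proposition~\ref{prop orbit closure ML no atoms}) rests instead on Theorem~\ref{main theorem}---the density of two-sided curves in $\CP\CM\CL^+$---together with a direct Dehn-twist argument: one first produces translates $\lambda_\epsilon\in\Map(S)\cdot\lambda$ of arbitrarily small length and positive intersection with a target two-sided curve $\gamma_0$ (using finiteness of train-track types, Lemma~\ref{lem finite train tracks}), and then applies the right power of $D_{\gamma_0}$ and Ivanov's Lemma~\ref{lem ivanov} to converge to any prescribed $c\cdot\gamma_0$.

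Second, even granting Theorem~\ref{thm minimal set}, your ``scaling upgrade'' from projective to honest convergence is not resolved, and neither suggestion works as stated. The double-cover idea fails because Lindenstrauss--Mirzakhani describes $\Map(\tilde R)$-orbit closures, whereas only the centralizer of the deck involution $\iota$ in $\Map(\tilde R)$ descends to $\Map(R)$; you would need an $\iota$-equivariant orbit-closure theorem, which is not a formal consequence. The pseudo-Anosov idea fails because a pseudo-Anosov acts by scaling only at its two fixed laminations: precomposing $\alpha_n$ by a pseudo-Anosov power does not multiply $\alpha_n\lambda^0$ by a scalar but pushes it toward the attracting lamination, destroying the projective control you had. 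A smaller gap: your reduction assumes the restriction $\lambda^0$ fills the component $R$, but by definition of $R_\lambda$ it is only guaranteed to have some non-compact leaf there; the paper handles this by first applying a pseudo-Anosov so that $\lambda$ and the target $\gamma_0$ together fill.
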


In the orientable case, Theorem \ref{thm orbit closures ML} is due to Lindenstrauss-Mirzakhani (see Theorem 1.2 in \cite{Lindenstrauss-Mirzakhani}). They obtain it as a consequence of their main theorem, the classification of mapping class group invariant measures on $\CM\CL(S)$. It seems that, for the time being, the latter kind of result is out of scope in the non-orientable setting---for example, everything that Lindenstrauss and Mirzakhani do, relies on the fact that the moduli space of $S$ has finite volume, and this is false if the surface is non-orientable. The proof of Theorem \ref{thm orbit closures ML} that we give below is actually pretty elementary and can also be enjoyed by those readers who only care about orientable surfaces.
\medskip

As we just pointed out, the moduli space of a non-orientable surface has infinite volume. Indeed, the usual analogy between mapping class group and an arithmetic lattice breaks in the non-orientable case. In fact, in the non-orientable case it might well be closer to the mark to compare the mapping class group with a Kleinian group of the second kind, or rather with some higher rank version of this. Recall that a Kleinian group is of the second kind if its discontinuity domain is not empty, or equivalently if its limit set is a proper subset of the boundary of hyperbolic space. Thinking of the Thurston boundary $\D\CT(S)=\CP\CM\CL(S)$ of Teichm\"uller space as being the analogue to the boundary at infinity of hyperbolic space, we interpret the following result as describing the limit set of the action of the mapping class group on Teichm\"uller space:

\begin{sat}\label{thm limit set}
  Let $S$ be a connected, possibly non-orientable, non-exceptional hyperbolic surface of finite topological type. We have $\overline{\Map(S)\cdot X}\cap\D\CT(S)=\CP\CM\CL^+(S)$ for any point $X$ in Teichm\"uller space $\CT(S)$.
\end{sat}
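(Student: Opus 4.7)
Write $L:=\overline{\Map(S)\cdot X}\cap\D\CT(S)$; the goal is to show $L=\CP\CM\CL^+(S)$. I prove the two inclusions separately.

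For the inclusion $\CP\CM\CL^+(S)\subseteq L$, the plan is to invoke Theorem~\ref{thm minimal set}. The set $L$ is closed and $\Map(S)$-invariant by construction, and it is non-empty: pick any pseudo-Anosov $\phi\in\Map(S)$ (which exists since $S$ is non-exceptional); the classical North-South dynamics on Thurston's compactification give $\phi^n X\to\lambda^+(\phi)\in L$, where the attracting fixed point $\lambda^+(\phi)\in\CP\CM\CL(S)$ is the projective class of a filling measured lamination and therefore lies in $\CP\CM\CL^+(S)$. Theorem~\ref{thm minimal set} then forces $L\supseteq\CP\CM\CL^+(S)$.

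For the reverse inclusion $L\subseteq\CP\CM\CL^+(S)$, argue by contradiction: assume $\phi_n X\to\lambda$ in $\overline{\CT(S)}$ with $\lambda=c\gamma+\lambda'$ for some one-sided simple closed curve $\gamma$ and $c>0$. Normalising by $M_n\to\infty$ so that $\ell_{\phi_n X}(\alpha)/M_n\to i(\alpha,\lambda)$ for every simple closed curve $\alpha$, we have $\ell_X(\phi_n^{-1}\gamma)=\ell_{\phi_n X}(\gamma)=o(M_n)$. The plan is to reduce to the stabiliser $\Stab(\gamma)\subset\Map(S)$: writing each $\phi_n=\psi_n\sigma_n$ with $\sigma_n\in\Map(S)$ carrying a fixed one-sided curve $\gamma_0$ to $\phi_n^{-1}\gamma$ and $\psi_n\in\Stab(\gamma)$, the question reduces (up to convergence of the auxiliary base points $\sigma_n^{-1}X$) to the action of $\Stab(\gamma)$ on $\CT(S)$. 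The essential geometric input, unavailable in the two-sided setting, is that the regular neighbourhood of $\gamma$ is a M\"obius band, so there is no infinite-order Dehn twist along $\gamma$; consequently $\Stab(\gamma)$ acts on $\CT(S)$ through the mapping class group of the strictly simpler cut surface $S\setminus\gamma$, and the length $\ell_\cdot(\gamma)$ is preserved exactly along every $\Stab(\gamma)$-orbit. An induction on topological complexity, applied to $S\setminus\gamma$, then shows that the Thurston limits of $\Stab(\gamma)$-orbits in $\overline{\CT(S)}$ are projective laminations supported on $S\setminus\gamma$ and cannot contain $\gamma$ itself as an atomic component, producing the required contradiction.

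\textbf{Main obstacle.} The delicate step is the reduction to $\Stab(\gamma)$: only $\ell_X(\phi_n^{-1}\gamma)=o(M_n)$ is controlled, not boundedness, so when $\ell_X(\phi_n^{-1}\gamma)\to\infty$ the auxiliary base points $\sigma_n^{-1}X$ escape every compact set of $\CT(S)$ and one cannot simply extract a constant-base-point subsequence. Making this rigorous is where most of the technical work lies, and a cleaner route is likely to lift the entire discussion through the orientation double cover $\tilde S\to S$: the sequence $\phi_n X\in\CT(S)\hookrightarrow\CT(\tilde S)$ converges to the $\sigma$-equivariant lift $\tilde\lambda$ of $\lambda$, and on the orientable surface $\tilde S$ one may invoke Mumford compactness together with classical orbit closure results to transfer the one-sidedness of $\gamma$ into a constraint visible on $\CT(\tilde S)$, ruling out the putative limit.
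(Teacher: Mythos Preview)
Your argument for the inclusion $\CP\CM\CL^+(S)\subseteq L$ is correct and is exactly what the paper does: $L$ is non-empty, closed and invariant, so by Theorem~\ref{thm minimal set} it contains the unique minimal set $\CP\CM\CL^+(S)$.

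For the reverse inclusion your approach is genuinely different from the paper's, and the gap you identify is real and, as far as I can see, not easily closed. The decomposition $\phi_n=\psi_n\sigma_n$ gives no control over the $\sigma_n^{-1}X$, so there is no fixed base point to which the induction hypothesis on $S\setminus\gamma$ could be applied; and even if you could arrange $\psi_n\in\Stab(\gamma)$ acting on a fixed $X$, nothing prevents the Thurston limit from acquiring $\gamma$ as an atom: the only consequence of $\ell_{\psi_nX}(\gamma)$ being bounded is $\iota(\gamma,\lambda)=0$, which is automatic. The orientation-cover route is no better, since $\Map(S)$ sits inside $\Map(\tilde S)$ as a very thin subgroup and the classical results about $\Map(\tilde S)$-orbit closures say nothing about $\Map(S)$-orbits.

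The paper avoids all of this by working with \emph{two-sided} curves rather than chasing the hypothetical one-sided atom. Fix a filling pair $\alpha,\beta$ of two-sided curves (Lemma~\ref{lem filling pair}); then $\ell_X(\cdot)$ and $\iota(\alpha,\cdot)+\iota(\beta,\cdot)$ are uniformly comparable on $\CM\CL(S)$. Along a sequence $\phi_n X\to[\lambda]$ one scales so that $\epsilon_n\phi_n(\alpha)\to\mu$ and $\epsilon_n\phi_n(\beta)\to\mu'$ in $\CM\CL(S)$; since $\alpha,\beta$ are two-sided and $\CM\CL^+(S)$ is closed (Scharlemann), both limits lie in $\CM\CL^+(S)$. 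One checks $\epsilon_n\to 0$, hence $\iota(\mu,\mu')=0$ and $\lambda':=\mu+\mu'\in\CM\CL^+(S)$. The comparability now gives $\iota(\lambda,\cdot)\le K\,\iota(\lambda',\cdot)$, and a short lemma (Lemma~\ref{lemma support}) converts this into $\supp(\lambda)\subset\supp(\lambda')$, forcing $\lambda\in\CM\CL^+(S)$. The point is that closedness of $\CM\CL^+$ is what you can use directly; trying to rule out a specific one-sided atom by hand is working against the grain.
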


After seeing all these beautiful theorems announced, the reader is surely burning in desire to read the proofs. Let us release some of the preassure in their chest and explain how to prove that filling and uniquely ergodic measured laminations are limits of two-sided curves. 

\subsection*{An argument}
Let us suppose that $\lambda$ is a filling and uniquely ergodic measured lamination. As every projective measured lamination, $\lambda$ can be approximated by a sequence $(\alpha_i)$ of weighted multicurves. Some of these might well be one-sided but, invoking for example the classification theorem of surfaces, we can get for all $i$ a two-sided simple curve $\beta_i$ with $\iota(\beta_i,\alpha_i)=0$. Passing to a subsequence we can assume that the sequence $(\beta_i)$ converges projectively to some $\mu\in\CP\CM\CL(S)$. Continuity of the intersection number implies then that $\iota(\mu,\lambda)=0$. Now, since $\lambda$ is filling we get that both $\mu$ and $\lambda$ have the same support. Since it is also uniquely ergodic we get that $\lambda$ and $\mu$ differ by a factor, meaning that they are equal in $\CP\CM\CL$. This proves that  the filling uniquely ergodic lamination $\lambda\in\CP\CM\CL(S)$ is the limit of the sequence of two-sided curves $(\beta_i)$. 
\medskip

At this point the reader might be ready to point out we also proved that every element in the closure of the set of filling uniquely ergodic measured laminations is also a limit of two sided curves, thinking that the set of filling uniquely ergodic measured laminations must be dense. Well, they are dense in say $\CP\CM\CL$ if the surface is orientable, but as we pointed out earlier this is not true for non-orientable surfaces. In the non-orientable case we get from say Theorem \ref{thm minimal set} that the set of filling uniquely ergodic laminations is dense in $\CP\CM\CL^+$. If the reader knew how to prove this without using the results of this paper, then everything we do here could be done in 5 to 10 pages. Indeed, the bulk of the work is to deal with measured laminations which are not ergodic.

\subsection*{Section-by-section summary}

Once this introduction is concluded, we have a section on preliminaries. We recall a few well-known facts about topology of surfaces, mapping class groups, laminations, and hyperbolic surfaces. There is nothing new here but the reader who is not used to thinking about non-orientable surfaces might still want to have a look.

In Section \ref{sec:uniform train tracks} we recall what train tracks are and introduce what we call {\em uniform}, or rather {\em $(C,\lambda)$-uniform} train tracks. In a nutshell, these are train tracks carrying a given lamination $\lambda$ and where, from the point of view of $\lambda$, all edges have comparable lengths. Existence of such train tracks, or rather the fact that every train track can be refined to such a train track, will be proved in Appendix \ref{appendix}. What we do in Section \ref{sec:uniform train tracks} is to prove two technical results---needed later on---about such uniform train tracks. We think that such uniform train tracks might turn out to be useful in other settings as well.

In any case, uniform train tracks play a key role in the proof of the following theorem in Section \ref{sec:coloring}:

\begin{named}{Theorem \ref{prop lm}}[Informal version]
  Let $\lambda\in\CL(S)$ be a lamination and let $\mu_1,\mu_2$ be distinct unit length ergodic measured laminations with support $\lambda$. If $\tau$ is a sufficiently nice uniform train track carrying $\lambda$ then there are disjoint non-empty sub-train tracks $\tau_1$ and $\tau_2$ such that any unit length measured lamination carried by $\tau_i$ is near $\mu_i$.
\end{named}

The actual statement of Theorem \ref{prop lm} is pretty technical---the informal version given here just captures the gist of it.

\begin{bem}
Theorem \ref{prop lm} is motivated by a result of Lenzhen-Masur \cite[Proposition 1]{Lenzhen-Masur}. It is not completely clear to us if, as stated, we could just have used what they prove instead of working with Theorem \ref{prop lm}. But in any case we would have had to comment on the proof because they work in the world of quadratic holomorphic differentials and hence, at least formally, deal only with orientable surfaces. We found however much simpler to just give a direct proof of Theorem \ref{prop lm}. Maybe a reader who is better versed than ourselves in taking geometric limits and exploiting compactness properties of spaces of manifolds would be able to see clearly enough through their argument to modify the proof so that it also works in the non-orientable case.
\end{bem}

Theorem \ref{prop lm} is maybe the main technical result of this paper and will play a key role in the proof of Theorem \ref{main theorem}. It has however applications that might be of independent interest. For example we obtain the following as a corollary:

\begin{named}{Corollary \ref{kor number of measures}}
Let $c(S)$ and $c^+(S)$ be, respectively, the maximal number of components of a multicurve and of a two-sided multicurve in $S$. Every lamination $\lambda\in\CL(S)$ supports at most $c(S)$ mutually singular ergodic transverse measures. Moreover, if $\lambda$ has no one-sided leaves then it supports at most $c^+(S)$ mutually singular ergodic transverse measures.
\end{named}

If $S$ has non-orientable genus $k$ and $r$ boundary components then we have 
$$c(S)=2k-3+r\text{ and }c^+(S)=\left\{
\begin{array}{ll}
\frac 12(3k-7+2r) & \text{ if }k\text{ is odd}\\
\frac 12(3k-8+2r) & \text{ if }k\text{ is even}
\end{array}
 \right\}, $$
meaning that $c(S)$ is about $k/2$ larger than $c^+(S)$.

\begin{bem}
Corollary \ref{kor number of measures} is due to Levitt \cite{Levitt} in the orientable case. He uses a beautiful idea due to Katok \cite{Katok} and we really encourage the reader to have a look at Levitt's argument---it is very nice mathematics.
\end{bem}

Let us continue now with the summary of the paper. In Section \ref{sec:two-sided} we prove Theorem \ref{main theorem}. Indeed, although we might have given the impression that Theorem \ref{main theorem} was a corollary of Theorem \ref{thm minimal set}, it is rather the other way around: all the theorems mentioned earlier build on Theorem \ref{main theorem}. The main idea of its proof is as follows: Let $\mu\in\CM\CL$ be a representative of the element in $\CP\CM\CL^+$ that we want to approximate and consider its ergodic decomposition. For the sake of concreteness say that $\mu=\mu_1+\mu_2$ with $\mu_i$ ergodic. Let then $\tau$ be a ``sufficiently nice'' train track carrying the support of $\mu$ and let $\tau_1,\tau_2\subset\tau$ be the sub-train tracks provided by Theorem \ref{prop lm}. We can then approximate $\mu$ by weighted (simple) multicurves $c_1\cdot\gamma_1+c_2\cdot\gamma_2$ with $\gamma_i$ carried by $\tau_i$. But we must make sure that we can choose the $\gamma_i$ to be two-sided. To do that we will need to first characterize which train tracks do not carry any two-sided curves and use then a (rather minimal) quantification of Scharlemann's result on the openness of the set of measured lamination which have a one-sided component.

Armed with Theorem \ref{main theorem} we attack the other results stated in the introduction. In Section \ref{sec:orbit closures ML} we prove Theorem \ref{thm orbit closures ML}. The proof only relies on the density of two-sided curves, that is Theorem \ref{main theorem}, and of elementary facts about measured laminations. In Section \ref{sec:orbit closures PML} we deduce  Theorem \ref{thm orbit closures PML} and Theorem \ref{thm minimal set} from Theorem \ref{thm orbit closures ML}. Finally, in Section \ref{sec:orbit closures Tecih} we prove Theorem \ref{thm limit set}.

As we mentioned earlier, we conclude with an appendix in which we prove the existence of uniform train tracks.

\subsection*{On the genesis of this paper}
This paper grew out of the work of the second of us. In fact, one of the inclusions in the orbit closure results mentioned above were proved by the second author in the unpublished preprint \cite{Matthieu}, and equality was established for surfaces of genus one. The genus one assumption guarantees that the complement of a one-sided curve is orientable and hence that one can use all the standard theory to deal with measured laminations disjoint from such a curve. The novel ingredients here are Theorem \ref{main theorem} and the fact that we give a proof of Theorem \ref{thm orbit closures ML} which does not rely on the Lindenstrauss-Mirzakhani classification of mapping class group invariant measures on the space of measured laminations.

It should be said that the orbits closure theorems were only a fraction of the content of \cite{Matthieu}---we hope to revisit other parts of the said paper at a later point.

\subsection*{Another paper}
At the moment of posting this paper to the arXiv we noticed that just a couple of days earlier Khan \cite{Khan} had posted a very nice paper on this topic. Some of his results are of compeltely different nature than what we are doing here: for example, in Theorem 5.2 he shows that certain subsets of Teichm\"uller space are non quasi-convex, answering in the negative a question asked in \cite{Matthieu}. Other results are however somewhat weaker results than the ones we present here. For example, his Theorem 3.3, although stated with different words, asserts that certain kinds of measured laminations, for example orientable ergodic laminations, are limits of two-sided curves. It goes without saying that prior to the completion of our respective papers, neither him nor ourselves were aware of each other's work.

We really encourage the reader to have a look at Khan's paper because the arguments he gives when dealing with particular cases of laminations are much prettier and simpler than what awaits them if they continue reading, following their way to the city of woe, to eternal pain, tra perduta gente.

\subsection*{Acknowledgements}
The last author would like to thank Anna Lenzhen for interesting conversation and pointing him to the paper \cite{Lenzhen-Masur}---it was an incredibly useful hint.

\section{Preliminaries}\label{sec:preli}
We assume that the reader feels at home in the world of surfaces, that they know what the mapping class group is, that they are familiar with laminations, measured laminations and Teichm\"uller spaces. We breeze through these concepts below, but should the reader wish more details we refer to the beautiful books \cite{Farb-Margalit, Casson-Bleiler, Penner-Harer}. However, even if the reader knows all these things inside out, it might well be that they have only encountered them in the orientable setting. This is why we start by recalling some basic facts about the topology of non-orientable surfaces.

\subsection{Non-orientable surfaces}
We should probably say once what a surface is: it is nothing other than a 2-dimensional smooth manifold, possibly with non-empty boundary. Moreover, unless we say explicitly otherwise, we will assume that our surfaces $S$ are connected and have {\em finite topological type}, by what we understand that $S$ is obtained from a compact surface $\Sigma$ by deleting some of its boundary components, possibly none and possibly all. We will indeed be mostly interested in the case that the surface $S$ has empty boundary, meaning that $S=\Sigma\setminus\D\Sigma$ where $\Sigma$ is a compact surface.

As in the orientable case, the classification theorem is the foundation on which one builds any work on surfaces:

\begin{named}{Classification theorem}
  Two compact connected surfaces $\Sigma$ and $\Sigma'$ are homeomorphic if and only if the following two hold:
  \begin{itemize}
  \item $\Sigma$ and $\Sigma'$ have the same number of boundary components and the same Euler characteristic.
  \item $\Sigma$ and $\Sigma'$ are either both orientable or both non-orientable.
  \end{itemize}
\end{named}
  
  We refer for example to \cite{Stillwell} for a proof of this theorem (under the, always satisfied, assumption that the involved surfaces are triangulable).

%

A simple curve in a surface is {\em essential} if it neither bounds a disk, nor an annulus, nor a M\"obius band. Equivalently, a simple curve is essential if and only if it is homotopically non-trivial, non-peripheral, and primitive. Note that, as long as $S$ has negative Euler characteristic, every orientation reversing simple curve is essential. Note also that the boundary of a regular neighborhood of such an orientation reversing simple curve, although it is simple and homotopically non-trivial, is not essential. In the sequel we will say that an essential simple curve is {\em one-sided} if it reverses orientation and {\em two-sided} if its preserves orientation. A {\em topological multicurve} is the union of pairwise non-isotopic disjoint simple curves---it is essential, (resp. two-sided, resp. one-sided) if all its components are.

  To conclude with these topological preliminaries, recall that we say that a surface $S$ with negative Euler characteristic is {\em exceptional} if it is either a pair of pants or non-orientable with $\chi(S) = -1$. For clarification, here is a list of the possible exceptional surfaces:
  \begin{itemize}
  \item {\em $S$ orientable:} Pair or pants
  \item {\em $S$ non-orientable:} Two-holed projective plane, one-holed Klein bottle, connected sum of three projective planes.
  \end{itemize}
The main reason to ignore exceptional surfaces is that, in all cases except the connected sum of three projective planes, they support no measured laminations---at least none containing non-compact leaves. Also the case of connected sum of three projective planes is a bit special, and for technical reasons we exclude it since sometimes we need to have space enough to find curves with certain properties. For example we will want to know the following:

\begin{lem}\label{lem filling pair}
  If $S$ is a non-exceptional connected surface then there are two-sided simple curves $\alpha,\beta\subset S$ which fill $S$.
  \end{lem}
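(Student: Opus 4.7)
The proof splits into an orientable and a non-orientable case. If $S$ is orientable and non-exceptional, the existence of two simple closed curves filling $S$ is classical (see, e.g., \cite{Farb-Margalit}), and in an orientable surface every essential simple closed curve is automatically two-sided, so nothing more is needed.

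Suppose now that $S$ is non-orientable with $\chi(S)\le -2$, and write $k\ge 1$ for the non-orientable genus and $r\ge 0$ for the number of boundary components. The idea is to reduce to the orientable case via a decomposition $S=\Sigma\cup M_1\cup\cdots\cup M_\ell$ where $\Sigma\subset S$ is a connected orientable subsurface and each $M_i$ is a Möbius band attached along one boundary component $\partial M_i$ of $\Sigma$. I take $\ell=1$ if $k$ is odd and $\ell=2$ if $k$ is even, realised by $\Sigma\cong\Sigma_{(k-\ell)/2,\,r+\ell}$; a quick case check using the list of exceptional surfaces shows that $\Sigma$ is orientable and non-exceptional for every non-exceptional $S$. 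The orientable case then yields a filling pair $\alpha_0,\beta_0\subset\Sigma$, which I may assume (after, if necessary, replacing the pair by its image under a suitable power of a Dehn twist along the $\partial M_i$'s, keeping the filling property) has the property that some subarc of $\alpha_0$ runs along the inner boundary of the peripheral annulus of $\Sigma\setminus(\alpha_0\cup\beta_0)$ adjacent to each $\partial M_i$.

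I now modify $\alpha_0$ to $\alpha\subset S$ by inserting $m_i$ pairwise disjoint \emph{detours} through each $M_i$, where a detour replaces a short subarc of $\alpha_0$ near $\partial M_i$ by an arc which crosses $\partial M_i$, traverses $M_i$ once across its core, and returns. I choose $m_1=2$ when $\ell=1$ and $m_1=m_2=1$ when $\ell=2$, so that $\sum m_i$ is even. Taking all detours disjoint yields a simple closed curve $\alpha$. Its mod-$2$ homology class satisfies $[\alpha]=[\alpha_0]+\sum_i m_i[e_i]$ in $H_1(S;\BZ/2)$, where $e_i$ denotes the core of $M_i$. Since every class coming from the orientable subsurface $\Sigma$ has vanishing mod-$2$ self-intersection, $[\alpha_0]\cdot[e_i]=0$ (as $\alpha_0$ and $e_i$ are disjoint), and $[e_i]\cdot[e_j]=\delta_{ij}$, a direct computation gives $[\alpha]\cdot[\alpha]\equiv\sum_i m_i\equiv 0\pmod 2$, so $\alpha$ is two-sided.

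It remains to check that $\alpha$ and $\beta:=\beta_0$ fill $S$. Every component of $\Sigma\setminus(\alpha_0\cup\beta_0)$ is a disk or a peripheral annulus of $\Sigma$; the only components that are not already disks or peripheral annuli of $S$ are the peripheral annuli of $\Sigma$ adjacent to the $\partial M_i$'s. Combining such an annulus with $M_i$ yields a thickened Möbius band, which is cut by the $m_i\ge 1$ detour arcs into $m_i$ disks: indeed, one arc from boundary to boundary across the core of a Möbius band cuts it into a disk, and any additional disjoint such arc becomes a chord of the resulting disk, subdividing it further. Hence every component of $S\setminus(\alpha\cup\beta)$ is a disk or a peripheral annulus of $S$, so $\alpha,\beta$ fill. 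The main obstacle in the argument is this combinatorial verification inside the thickened Möbius bands, together with the preliminary arrangement that $\alpha_0$ has subarcs available for all the detours.
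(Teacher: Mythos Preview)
Your overall strategy---reduce to an orientable subsurface $\Sigma$ and then push a filling pair back into $S$ via detours through the attached M\"obius bands---is close in spirit to the paper's argument, and your $\ell=2$ case works. But the $\ell=1$ case has a genuine gap.

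When $\ell=1$ you need $m_1=2$, i.e.\ two disjoint detour arcs in the single M\"obius band $M_1$, each crossing the core once. The feet of these detours, as you set things up, lie on arcs of $\alpha_0$ on the inner boundary of the peripheral annulus, and pushing across the collar preserves cyclic order. So the four feet $q_1,q_2,q_3,q_4\in\partial M_1$ come in \emph{unlinked} pairs: $(q_1,q_2)$ and $(q_3,q_4)$ with cyclic order $q_1,q_2,q_3,q_4$. But a M\"obius band does not contain two disjoint essential arcs with unlinked endpoint pairs: cutting $M_1$ along the first essential arc $d_1$ yields a disk $D$ whose boundary reads $d_1^+,\,c_+,\,d_1^-,\,c_-$ with $c_\pm$ the two arcs of $\partial M_1\setminus\{q_1,q_2\}$; both $q_3,q_4$ lie in the single arc $c_-$, so any chord of $D$ joining them is isotopic rel endpoints into $\partial M_1$ and hence not essential. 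Thus your two detours cannot both cross the core while remaining disjoint, and either $\alpha$ fails to be embedded or it misses the core $e_1$ (so $\alpha,\beta$ do not fill). Your preliminary Dehn-twist step does not help: $\alpha_0,\beta_0$ are disjoint from the boundary curve $\partial M_1$, so $D_{\partial M_1}$ fixes them.

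The paper avoids this by working the other way around: it cuts $S$ along a one-sided curve $\eta$, chooses two distinct points $x,y\in\eta$, and builds $\alpha$ from two arcs in the orientable complement whose endpoints are the preimages $x',x'',y',y''$. On $\partial N(\eta)$ these preimages occur in the \emph{linked} cyclic order $x',y',x'',y''$ (since $\partial N(\eta)\to\eta$ is a double cover), and linked endpoint pairs \emph{do} admit two disjoint essential arcs in the M\"obius band. That is exactly the configuration your construction cannot reach.
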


Recall that a collection $\{\alpha_1,\dots,\alpha_r\}$ of curves {\em fills} $S$ if $\sum_i\iota(\alpha_i,\gamma)>0$ for every essential curve $\gamma$---here $\iota(\cdot,\cdot)$ is the geometric intersection number.
\medskip

We are sure that Lemma \ref{lem filling pair} is well-known and that is why we just sketch the proof in the case of non-orientable surfaces. Via the classification theorem of surfaces we get that there is $\eta\subset S$ consisting of either a single one-sided simple curve (if $S$ has odd genus) or two disjoint one-sided simple curves (if the genus is even) such that $S\setminus\eta$ is connected and orientable. For the sake of concreteness we assume that we are in the former case, leaving the other case to the reader.

  Let $\Sigma$ be the metric completion of $S\setminus\eta$ and note that a connected component of $\D\Sigma$ maps in a two-to-one way onto $\eta$---the remaining components are mapped homeomorphically onto $\D S$. Choose two distinct points $x,y\in\eta$ and let $x',x''$ and $y',y''$ be the corresponding points in $\D\Sigma$. Since $\chi(\Sigma)=\chi(S\setminus\eta)=\chi(S)\le-2$ we can find two disjoint, non-boundary parallel simple arcs $\kappa',\kappa''\subset\Sigma$ with endpoints $\D\kappa'=\{x',y'\}$ and $\D\kappa''=\{x'',y''\}$. When mapped into $S$, the two arcs $\kappa'$ and $\kappa''$ match together and yield a two-sided curve $\alpha\subset S$.

  Now let $\beta'\subset S\setminus\eta$ be any essential curve which fills with $\kappa'\cup\kappa''$. The image $\beta$ of $\beta'$ in $S$ is the second curve we are after. This concludes the discussion of Lemma \ref{lem filling pair}.\qed
  \medskip

\subsection{Mapping class groups}
The mapping class group of a surface $S$ is the group
$$\Map(S)=\Homeo(S)/\Homeo_0(S)$$
of isotopy classes of homeomorphisms of $S$---the reader can replace homeomorphisms by diffeomorphisms if they so wish, and nothing will change.

Both in the orientable and in the non-orientable case, the prime example of an element in the mapping class group $\Map(S)$ is a {\em Dehn-twist} along a two-sided curve $\gamma$. A Dehn-twist $D_\gamma$ along $\gamma$ is a mapping class represented by a homeomorphism obtained as follows: conjugate the homeomorphism
$$(\BR/\BZ)\times[0,1]\to(\BR/\BZ)\times[0,1],\ \ (\theta,t)\mapsto(\theta+t,t)$$
 via a homeomorphism between the standard annulus $(\BR/\BZ)\times[0,1]$ and a regular neighborhood $A$ of $\gamma$, and extend the so obtained homeomorphism $A\to A$ by the identity to a self-homeomorphism $S\to S$.

If $\gamma$ and $\gamma'$ are disjoint two-sided curves then they have disjoint regular neighborhoods. This means that Dehn-twists along $\gamma$ and $\gamma'$ commute. 
 
 Below we will need to understand what happens to certain curves when we apply Dehn-twists to them. The following lemma due to Ivanov \cite[Lemma 4.2]{Ivanov} is priceless:

 \begin{lem}[Ivanov]\label{lem ivanov}
   Let $\alpha$ and $\beta$ be simple essential curves and assume that $\gamma=\gamma_1\cup\dots\cup\gamma_r$ is a two-sided simple topological multicurve. Let $D_{\gamma_i}$ be a Dehn-twist along $\gamma_i$ and for some $n_1,\dots,n_r\in\BZ$ let $T=D_{\gamma_1}^{n_1}\circ\ldots\circ D_{\gamma_r}^{n_r}$. Then we have
   \begin{align*}
   \iota(T(\alpha),\beta)&\le\left(\sum_i\vert n_i\vert\cdot\iota(\alpha,\gamma_i)\cdot\iota(\gamma_i,\beta)\right)+\iota(\alpha,\beta)\\
   \iota(T(\alpha),\beta)&\ge\left(\sum_i(\vert n_i\vert-2)\cdot\iota(\alpha,\gamma_i)\cdot\iota(\gamma_i,\beta)\right)-\iota(\alpha,\beta)
   \end{align*}
\end{lem}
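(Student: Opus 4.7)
The plan is to prove both inequalities by a direct geometric analysis inside disjoint annular neighborhoods of the twist curves, essentially reducing to a single-twist calculation carried out in the annular model.

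First I would fix a hyperbolic metric on $S$ and replace $\alpha$, $\beta$, and each $\gamma_i$ by their geodesic representatives, so that the three families are in pairwise minimal position. Since the $\gamma_i$ are two-sided and pairwise disjoint, one can choose pairwise disjoint annular neighborhoods $A_1,\dots,A_r$ of $\gamma_1,\dots,\gamma_r$, thin enough that each component of $\alpha\cap A_i$ and of $\beta\cap A_i$ is an embedded arc crossing $A_i$ from one boundary to the other. Each $D_{\gamma_i}^{n_i}$ is represented by a homeomorphism supported in $A_i$, and because the $A_i$ are disjoint these commute, so $T$ is represented by a homeomorphism $\Phi$ that is the identity off $\bigcup A_i$ and performs the $n_i$-fold twist inside $A_i$. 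Set $\tilde\alpha=\Phi(\alpha)$: outside $\bigcup A_i$ it agrees with $\alpha$, while inside each $A_i$ it consists of the $\iota(\alpha,\gamma_i)$ arcs of $\alpha\cap A_i$, each winding $n_i$ additional times around $\gamma_i$.

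For the upper bound, one counts $|\tilde\alpha\cap\beta|$ directly. Outside the annuli, $\tilde\alpha\cap\beta=\alpha\cap\beta$, contributing at most $\iota(\alpha,\beta)$ points. Inside $A_i$, identified with $(\BR/\BZ)\times[0,1]$ so that $\gamma_i$ is the core circle, there are $\iota(\beta,\gamma_i)$ transverse arcs of $\beta\cap A_i$, and a direct inspection of the flat model shows that each of the $\iota(\alpha,\gamma_i)$ winding arcs of $\tilde\alpha\cap A_i$ meets each transverse arc of $\beta\cap A_i$ in at most $|n_i|$ points. Summing yields $|\tilde\alpha\cap\beta|\le \sum_i|n_i|\,\iota(\alpha,\gamma_i)\,\iota(\gamma_i,\beta)+\iota(\alpha,\beta)$, and the upper bound follows from $\iota(T(\alpha),\beta)\le|\tilde\alpha\cap\beta|$.

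For the lower bound one must bound the number of intersection points that can be removed by isotopy, equivalently bound the number of bigons between $\tilde\alpha$ and $\beta$. The key observation is that the $\tilde\alpha$-side of a bigon cannot make a full turn around any $\gamma_i$: if it did, then together with a subarc of $\beta$ one would produce a bigon between $\beta$ and $\gamma_i$ inside $A_i$, contradicting the minimal position of $\beta$ and $\gamma_i$. Hence bigons are confined to the two ends of each spiral, and at most $2\,\iota(\alpha,\gamma_i)\,\iota(\gamma_i,\beta)$ intersections can be cancelled inside $A_i$, plus at most $\iota(\alpha,\beta)$ intersections outside the annuli. Combined with the fact that, taken geodesic in the flat metric, the spirals contribute exactly $|n_i|\,\iota(\alpha,\gamma_i)\,\iota(\gamma_i,\beta)$ intersections inside $A_i$, this gives the stated lower bound. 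The main obstacle is exactly this bigon bookkeeping: bigons may straddle $\partial A_i$ or connect two different annuli through the exterior, and to handle them cleanly I would pass to the annular cover $\widetilde S_i\to S$ associated to $\pi_1(\gamma_i)$, where $\gamma_i$ lifts to a single core geodesic and the spiral arcs become arcs of constant slope $n_i$, reducing the counting to a transparent linear argument in a strip. Two-sidedness of the $\gamma_i$ is essential throughout so that the annular neighborhoods and Dehn twists are well-defined, but no orientability hypothesis on $S$, $\alpha$, or $\beta$ is used.
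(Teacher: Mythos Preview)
Your argument is essentially correct and is in fact the standard direct proof of Ivanov's inequality, carried out in a way that makes no use of orientability of $S$. The paper, however, takes a different and much shorter route for the lower bound: rather than doing the bigon bookkeeping directly, it simply lifts $\alpha$, $\beta$, $\gamma$ and $T$ to the orientation double cover $S'\to S$ and quotes Ivanov's lemma in the orientable setting. Since each two-sided $\gamma_i$ lifts to two disjoint copies and intersection numbers of full preimages double, the orientable inequalities upstairs descend to the stated inequalities on $S$.

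The trade-off is clear: your approach is self-contained and reproves the orientable case along the way, at the cost of the delicate bigon accounting you yourself flag (bigons straddling $\partial A_i$, bigons running between distinct annuli). Passing to the annular cover of $\gamma_i$ is a legitimate way to tame this, and it works here because two-sidedness of $\gamma_i$ forces that cover to be an honest annulus. The paper's approach bypasses all of this by treating the orientable result as a black box, which is cleaner if one is willing to cite \cite{Ivanov}, but gives no insight into why the constants are what they are.
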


 Since Ivanov only considers the orientable case, let us comment briefly on the proof in the non-orientable case. First, note that the upper bound for $\iota(T(\alpha),\beta)$ is just the number that you get by applying the standard representative of $T$ to $\alpha$ and counting intersection points without bothering to get rid of bigons. To get the lower bound lift everything to the orientation cover $S'\to S$ and apply Ivanov's lemma there.\qed
\medskip 

Note now that, since measured laminations and a fortiori multicurves are determined by intersection numbers, we get from Ivanov's lemma, and with the same notation as therein, that 
$$\lim_{k\to\infty}\frac 1k\cdot T^k(\alpha)=\sum_i \big(\vert n_i\vert\cdot\iota(\alpha,\gamma_i)\big)\cdot\gamma_i.$$
Since for every multicurve $\gamma=\gamma_1\cup\dots\cup\gamma_r$ we can find a two-sided curve $\alpha$ with $\iota(\alpha,\gamma_i)\neq 0$ for all $i$ we get then the following well-known fact:

\begin{lem}\label{lem approximate multicurves by curves}
Every two-sided weighted multicurve is a limit of weighted two-sided curves.\qed
\end{lem}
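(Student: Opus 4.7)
Write the two-sided weighted multicurve as $\mu=\sum_{i=1}^r c_i\gamma_i$ with $c_i>0$ and $\gamma_1,\dots,\gamma_r$ pairwise disjoint two-sided simple curves. My plan is to realise $\mu$ as a limit of measured laminations of the form $\frac{1}{N}\cdot\frac{1}{k}\,T_N^{k}(\alpha)$ where $\alpha$ is a fixed two-sided simple curve, $T_N$ is a suitable product of Dehn twists along the $\gamma_i$, and we let first $k\to\infty$ and then $N\to\infty$; since Dehn twists are homeomorphisms, each $T_N^k(\alpha)$ is itself a two-sided simple curve, so the approximating objects are indeed weighted two-sided curves.

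First I would invoke the auxiliary fact, stated just before the lemma, that for the two-sided multicurve $\gamma=\gamma_1\cup\dots\cup\gamma_r$ one can find a two-sided simple curve $\alpha$ with $\iota(\alpha,\gamma_i)>0$ for every $i$. For any $N\in\BN$ set
\[
n_i^{(N)}=\left\lfloor \frac{Nc_i}{\iota(\alpha,\gamma_i)}\right\rfloor\in\BZ_{\ge 0},
\qquad
T_N=D_{\gamma_1}^{n_1^{(N)}}\circ\cdots\circ D_{\gamma_r}^{n_r^{(N)}},
\]
and note that the twists commute since the $\gamma_i$ are pairwise disjoint. Applying the displayed consequence of Ivanov's lemma (Lemma \ref{lem ivanov}) to $T_N$ yields
\[
\lim_{k\to\infty}\frac{1}{k}\,T_N^k(\alpha)\;=\;\sum_{i=1}^r\bigl(n_i^{(N)}\cdot\iota(\alpha,\gamma_i)\bigr)\,\gamma_i
\]
in $\CM\CL(S)$.

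Dividing by $N$ and using $n_i^{(N)}\iota(\alpha,\gamma_i)/N\to c_i$ as $N\to\infty$, the right-hand side divided by $N$ converges to $\sum_i c_i\gamma_i=\mu$. A standard diagonal argument then produces integers $k_N$ such that the sequence
\[
\beta_N\;=\;\frac{1}{Nk_N}\,T_N^{k_N}(\alpha)\;\in\;\CM\CL(S)
\]
converges to $\mu$. Since $T_N^{k_N}$ is a homeomorphism of $S$ and $\alpha$ is a two-sided simple curve, the underlying curve of $\beta_N$ is a two-sided simple curve, and $\beta_N$ is by construction a weighted two-sided curve. This proves the lemma.

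The only step requiring any real work is the choice of $\alpha$; everything else is a direct bookkeeping of Ivanov's estimates. In the proof I would either refer to the preceding paragraph where this is asserted, or sketch it by taking a filling pair of two-sided curves supplied by Lemma \ref{lem filling pair} and observing that at least one of them must intersect every component of the two-sided multicurve $\gamma$, up to combining them by a Dehn twist if necessary.
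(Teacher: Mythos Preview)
Your proof is correct and follows exactly the approach the paper sketches in the paragraph preceding the lemma: choose a two-sided curve $\alpha$ meeting every $\gamma_i$, apply the displayed limit formula derived from Ivanov's lemma to iterates of a multitwist, and scale to hit the desired weights. You have simply filled in the details (the choice of $n_i^{(N)}$ and the diagonal argument) that the paper leaves implicit.
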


Dehn-twists fix many curves: the curve one is twisting around, as well as all the curves disjoint from it. Pseudo-Anosov mapping classes are on the other extremum. Recall that, following Thurston, a mapping class is {\em pseudo-Anosov} if none of its positive powers fixes the homotopy class of an essential curve. As in the orientable setting, examples of pseudo-Anosov mapping classes can be constructed as follows: let $D_{\alpha}$ and $D_{\beta}$ be Dehn-twists along two two-sided curves $\alpha$ and $\beta$ which together fill the surface $S$ and consider the composition $D_\alpha^n\circ D_\beta^n$. It follows (compare with \cite{Johanna}) for example from Lemma \ref{lem ivanov} that $D_\alpha^n\circ D_\beta^n$ is pseudo-Anosov for all large $n$. Alternatively, the reader can consider the lifts of $D_\alpha$ and $D_\beta$ to the orientation cover and quote the result in the orientated case. Either way we get:

\begin{lem}\label{lem existence PA}
If $S$ is a connected non-essential hyperbolic surface then $\Map(S)$ contains pseudo-Anosov elements.\qed
  \end{lem}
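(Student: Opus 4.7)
\textbf{Proof plan for Lemma \ref{lem existence PA}.} The plan is to take the filling pair of two-sided curves supplied by Lemma \ref{lem filling pair}, form the composition of high powers of the associated Dehn twists, and argue that this element cannot preserve the isotopy class of any essential curve. Concretely, let $\alpha,\beta\subset S$ be two-sided simple curves that fill, let $D_\alpha,D_\beta$ be Dehn-twists along them, and set $\phi_n=D_\alpha^n\circ D_\beta^n$. I will show that $\phi_n$ is pseudo-Anosov for all sufficiently large $n$.

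I would prefer the route via the orientation cover, since the paper has already observed that the orientable version of the statement may be quoted. Let $p:\widetilde S\to S$ be the orientation double cover. Because $\alpha$ and $\beta$ are two-sided, each lifts to a disjoint union of two simple curves, $p^{-1}(\alpha)=\widetilde\alpha_1\cup\widetilde\alpha_2$ and $p^{-1}(\beta)=\widetilde\beta_1\cup\widetilde\beta_2$, and the lift of $D_\alpha$ (resp.\ $D_\beta$) is the multitwist $D_{\widetilde\alpha_1}D_{\widetilde\alpha_2}$ (resp.\ $D_{\widetilde\beta_1}D_{\widetilde\beta_2}$), which makes sense because disjoint two-sided Dehn-twists commute. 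Since $\alpha,\beta$ fill $S$, the multicurves $\widetilde\alpha=\widetilde\alpha_1\cup\widetilde\alpha_2$ and $\widetilde\beta=\widetilde\beta_1\cup\widetilde\beta_2$ fill $\widetilde S$. By Thurston's (or equivalently Penner's) construction for orientable surfaces, the lift $\widetilde\phi_n=D_{\widetilde\alpha}^{\,n}D_{\widetilde\beta}^{\,n}$ is pseudo-Anosov on $\widetilde S$ for all large enough $n$. Its invariant projective measured foliations are $\mathbb Z/2$-equivariant (being unique up to scale), hence descend to projective measured foliations on $S$; this shows $\phi_n$ is pseudo-Anosov on $S$.

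Alternatively, and this is the route hinted at by the text, one can give a direct proof using Lemma \ref{lem ivanov}. Suppose towards a contradiction that some power $\phi_n^{\,k}$ fixes the isotopy class of an essential curve $\gamma$. Iterating Lemma \ref{lem ivanov} shows that for any auxiliary two-sided curve $\delta$ with $\iota(\delta,\alpha)+\iota(\delta,\beta)>0$ one has
\[
\iota(\phi_n^{\,m}(\gamma),\delta)\ge c\,(n-2)^{m}\bigl(\iota(\gamma,\alpha)+\iota(\gamma,\beta)\bigr)-(\text{lower order}),
\]
for a positive constant $c=c(\alpha,\beta,\delta)$, provided $n\ge 3$. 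Since $\alpha,\beta$ fill, the right-hand side is unbounded in $m$, contradicting $\phi_n^{\,k}\gamma=\gamma$.

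The main obstacle is the step where one passes from the orientable case back to $S$: one has to know that the invariant foliations of the pseudo-Anosov lift are preserved by the deck involution (which follows from their uniqueness up to scale and the fact that $\widetilde\phi_n$ itself is the lift of $\phi_n$) and then argue that their quotients on $S$ are genuine, transversally measured, singular foliations that witness pseudo-Anosovness downstairs. This is standard but one must take a little care because $S$ is non-orientable and foliations on $S$ are only locally transversely orientable. Everything else is a routine application of the tools already assembled in the preliminaries.
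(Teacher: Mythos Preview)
Your two routes are exactly the two routes the paper sketches before stating the lemma, so the overall strategy matches. The Ivanov-lemma argument is essentially what the paper has in mind (compare with the reference to \cite{Johanna}); your displayed inequality is a bit schematic, but the growth estimate one actually gets by applying Lemma \ref{lem ivanov} first to $D_\beta^n$ and then to $D_\alpha^n$ does force $\iota(\phi_n^m(\gamma),\delta)\to\infty$ for large $n$, which is all you need.

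There is, however, a slip in your orientation-cover route. The deck involution of $p:\widetilde S\to S$ is orientation-reversing, so conjugating a right Dehn twist by it yields a left Dehn twist. Hence the lift of $D_\alpha$ is not $D_{\widetilde\alpha_1}D_{\widetilde\alpha_2}$ but rather $D_{\widetilde\alpha_1}D_{\widetilde\alpha_2}^{-1}$ (and similarly for $\beta$). In particular $\widetilde\phi_n$ is not a positive multitwist composed with a positive multitwist, so it is not literally an instance of Thurston's or Penner's construction, and you cannot quote those results as stated. The conclusion is still true---one can show $\widetilde\phi_n$ is pseudo-Anosov for large $n$ via Ivanov's bounds upstairs, or simply note that if $\phi_n$ fixed a curve then $\widetilde\phi_n$ would fix its preimage---but the justification you wrote does not quite work. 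Since your second route already does the job, the cleanest fix is to drop the appeal to Penner and rely on the Ivanov-lemma argument, which is what the paper does as well.
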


Non-orientable surfaces are confusing. This is why we add here some comments to put things into context, although none of these things will play a role in the sequel:
\smallskip

\noindent{\bf 1.)} Note that if $S$ is orientable then what we here called the mapping class group is ofter referred to as the {\em full mapping class group}, with the mapping class group itself being the subgroup consisting of elements represented by orientation preserving homeomorphisms. That last sentence makes no sense for non-orientable surfaces.

\noindent{\bf 2.)} The reader might have been disturbed because we wrote ``a'' instead of ``the'' in the sentence ``a Dehn-twist along $\gamma$''. It is not a typo. The point is that, even up to isotopy, there is no uniqueness for the homeomorphism between $(\BR/\BZ)\times[0,1]$ and the regular neighborhood $A$ of our given two-sided curve $\gamma$, the homeomorphism we used to define the Dehn-twist. In the orientable world one can decide to choose it to be orientation preserving. This leads to the notion of {\em right Dehn-twist}. If one chooses it to be orientation reversing then one has the {\em left Dehn-twist}---these notions make no sense in the non-orientable world and hence ``a'' is the correct article.

\noindent{\bf 3.)} Recall that the adjective ``two-sided'' only applies to essential curves. If we were to mimic the construction of a Dehn-twist but starting with a non-essential orientation preserving simple curve which is the boundary of a M\"obius band then we would get the trivial mapping class. On the other hand we get for example from Lemma \ref{lem ivanov} that Dehn-twists along two-sided curves have infinite order in $\Map(S)$. 

\noindent{\bf 4.)} There is nothing like a Dehn-twist along a one-sided curve. In fact, every homeomorphism of a surface which fixes the complement of a M\"obius band is isotopic to the identity.
\medskip

We refer to \cite{Farb-Margalit} for facts on the mapping class group, and to \cite{Paris-notes} and the references therein for facts on mapping class groups of specifically non-orientable surfaces.

\subsection{Hyperbolic metrics}

It follows from the classification theorem of surfaces, together with elementary constructions in hyperbolic geometry, that every connected surface of negative Euler characteristic admits a complete hyperbolic metric with totally geodesic boundary and finite volume. In fact, the Gau\ss-Bonnet theorem implies that the condition on the Euler-characteristic is not only sufficient but also necessary for the existence of such a metric. Under a hyperbolic surface, orientable or not, we will understand in the sequel a surface endowed with such a complete hyperbolic metric with (possibly empty) totally geodesic boundary and finite volume.

As in the orientable setting, we let the Teichm\"uller space $\CT(S)$ of $S$ be the set of all isotopy classes of hyperbolic metrics on $S$. The pull-back action of the group of diffeomorphisms on the set of hyperbolic metrics induces an action $\Map(S)\actson\CT(S)$ of the mapping class group on Teichm\"uller space. 

Both in the orientable and in the non-orientable settings one can come up with many reasonable ways to endow $\CT(S)$ with a topology. With the proviso that what looks reasonable to somebody, might look bonkers to everybody else, all those definitions agree. The obtained topology on $\CT(S)$ is mapping class group invariant and $\CT(S)$ becomes homeomorphic to a euclidean space. If for example one endows $\CT(S)$ with the topology induced by the embedding via length functions into the space $\BR_+^{\CS(S)}$ where $\CS(S)$ is the set of essential curves, then a concrete homeomorphism between $\CT(S)$ and the euclidean space of appropriate dimension is obtained via Fenchel-Nielsen coordinates. See \cite{Papa-Penner} for a discussion on the Teichm\"uller space and on Fenchel-Nielsen coordinates for non-orientable surfaces.

\subsection{Laminations and measured laminations}
Let $S$ be a hyperbolic surface with empty boundary, orientable or not, and recall that we require it to be complete and of finite volume. A {\em lamination} is a compact subset of $S$ which admits a decomposition into disjoint simple geodesics. Those geodesics are unique---they are the {\em leaves} of the lamination. A basic example of a lamination on $S$ is the closed geodesic isotopic to an simple essential curve. Simple geodesic multicurves are a slight generalization of this example: a {\em simple geodesic multicurve} is a lamination obtained by taking the collection of simple closed geodesics corresponding to an essential simple topological multicurve.

The set $\CL(S)$ of all laminations on $S$ is compact when endowed with the topology induced by the Hausdorff distance. Also, if $\phi:S\to S'$ is a homeomorphism between hyperbolic surfaces then there is a homeomorphism $\phi_*:\CL(S)\to\CL(S')$ sending simple closed geodesics $\gamma$ to the geodesic freely homotopic to $\phi(\gamma)$. Moreover, homotopic homeomorphisms $S\to S'$ induce the same homeomorphism $\CL(S)\to\CL(S')$. It follows that there is an action $\Map(S)\actson\CL(S)$ of the mapping class group on the space $\CL(S)$ of laminations.
\medskip

A {\em measured lamination} $\lambda$ is a lamination $\lambda_{\CL}$ endowed with a transverse measure. Recall that a {\em transverse measure} is an assignment of a Radon measure $\lambda^I$ on every segment $I$ transversal to the lamination in such a way that segments which are isotopic relative to the lamination have the same measure
$$\iota(\lambda,I)\stackrel{\text{def}}=\lambda^I(I).$$
The {\em support} $\supp(\lambda)$ is the smallest sublamination of $\lambda_{\CL}$ such that $\lambda^I=0$ for every segment $I$ with $I\cap\supp(\lambda)=\emptyset$. The set of all measured laminations in $S$ is denoted by $\CM\CL(S)$.

Again, the most basic example of a measured lamination is given by an essential simple closed geodesics $\gamma$: the underlying lamination is the geodesic $\gamma$ itself and the transverse measure is such that it assigns to the segment $I$ the counting measure of the set $I\cap\gamma$. We will often identify the homotopy class of the essential curve $\gamma$, the geodesics $\gamma$ in this class, and the corresponding element $\gamma\in\CM\CL(S)$.

The union of non-intersecting laminations is a lamination and the sum of transverse measures on two disjoint laminations is a transverse measure on the union. It follows that the sum $\lambda_1+\lambda_2$ of two measured laminations is a well-defined measured lamination as long as the supports of $\lambda_1$ and $\lambda_2$ do not meet transversally. For example, if $\gamma_1,\dots,\gamma_r$ are the components of a simple geodesic multicurve and if $c_1,\dots, c_r$ are positive reals, then $\sum c_i\gamma_i$ is a perfectly sound measured lamination, a {\em weighted multicurve}. Along the same lines, any measured lamination $\lambda\in \CM\CL(S)$ has a unique decomposition $\lambda=\gamma + \lambda'$ where $\gamma$ is a weighted multicurve corresponding to the atomic part of $\lambda$ and where $\lambda'$ is a measured lamination whose support does not contain any closed leaves. 

We say that a measured lamination $\lambda$ is {\em ergodic} if whenever $X\subset\supp(\lambda)$ is a saturated subset of its support (that is, a union of leaves) then for every segment $I$ we have that either $\lambda^I(I\cap X)=0$ or $\lambda^I(I\setminus X)=0$. Two measured laminations $\lambda_1,\lambda_2\in\CM\CL(S)$ are {\em mutually singular} if for every segment $I$ transversal to $\lambda_\CL=\supp(\lambda_1)\cup\supp(\lambda_2)\in\CL(S)$ there is $Z\subset I$ with $\lambda_1^I(Z)=0$ and $\lambda_2^I(I\setminus Z)=0$.

Up to scaling, every lamination supports only finitely many ergodic measured laminations (recall for that matter that we give a concrete bound in  Corollary \ref{kor number of measures}). Moreover, every measured lamination can be written as the sum of ergodic ones. To be more precise, for every $\lambda\in\CM\CL(S)$ there are pairwise mutually singular ergodic measured laminations $\mu_1,\dots,\mu_r\in\CM\CL(S)$ with $\supp(\mu_i)\subset\supp(\lambda)$ and with
\begin{equation}\label{eq ergodic decomposition}
\lambda=\mu_1+\dots+\mu_r.
\end{equation}
In fact, the collection $\{\mu_1,\dots,\mu_r\}$ is unique. The measured laminations $\mu_1,\dots,\mu_r$ are the {\em ergodic components} of $\lambda$ and \eqref{eq ergodic decomposition} is its {\em ergodic decomposition}.
\medskip

Measured laminations can be seen as currents (see \cite{BonahonFrench,book} for details) and hence can be identified with measures on the unit tangent bundle $T^1S$ which are invariant under both the geodesic flow and the geodesic flip. To describe how this identification goes, suppose that $\gamma\in\CM\CL(S)$ is a closed geodesic and note that it corresponds to two orbits $\gamma_+$ and $\gamma_-$ of period $\ell_S(\gamma)$ of the geodesic flow: traversing the geodesic in one direction and in the other direction. The flip and flow invariant measure $\hat\gamma$ associated to $\gamma$ is the convex combination of the Lebesgue measure along those two orbits. In symbols, this means that for $f\in C^0(T^1S)$ we have
$$\int_{T^1S}f(v)\, d\hat\gamma(v)=\frac 12\left(\int_0^{\ell_S(\gamma)}f(\rho_t(v))\, dt+\int_0^{\ell_S(\gamma)}f(\rho_t(-v))\, dt\right)$$
where $v\in T^1S$ is any vector tangent to the geodesic $\gamma$. The factor $\frac 12$ is there to ensure that the total measure of $\hat\gamma$ agrees with the length of $\gamma$. In general we refer to the total measure
$$\ell_S(\mu)=\Vert\hat\mu\Vert$$
of $\hat\mu$ as the {\em length of $\mu\in\CM\CL(S)$}. 

\begin{bem}
We mentioned above the {\em ergodic decompostion of a measured lamination}. Well, to our surprise we found out that it is not easy to give a reference for its existence---to our surprise because it is definitively common knowledge. Let us thus briefly explain a way to get it. Given a measured lamination $\mu\in\CM\CL(S)$, let $\hat\mu$ be the associated flip and flow invariant measure. In other words, $\mu$ is a $(\BR\rtimes\BZ/2\BZ)$-invariant measure on $T^1S$. Applying the usual ergodic decomposition theorem (or maybe noting that the argument used to prove it for iterates of an individual map applies word-by-word for actions of a group such as $\BR\rtimes\BZ/2\BZ$) we get that $\hat\mu$ can be written as a linear combination of ergodic measures, or rather as an integral over the space of ergodic measures. Now, via the identification between flip and flow invariant measures and currents, we get that each one of the measures we are integrating over arises from an ergodic measured lamination carried by the support of $\mu$. This means that we can write $\mu$ as an integral over the space of ergodic measured laminations with support contained in $\supp(\mu)$. As we pointed out earlier (compare with Corollary \ref{kor number of measures}) the latter set is finite, meaning that instead of an integral we have a sum as in \eqref{eq ergodic decomposition}.
\end{bem}


The space of measures on $T^1S$ is naturally endowed with the weak-*-topology. We get thus a topology on $\CM\CL(S)$ via the embedding of the latter in the space of flip and flow invariant measures on $T^1S$. We list a few properties of the space $\CM\CL(S)$:

\begin{itemize}
\item $\CM\CL(S)$ is a metrizable space and the corresponding projective space $\CP\CM\CL(S)$ is compact. In fact, it turns out that $\CM\CL(S)$ and $\CP\CM\CL(S)$ are homeomorphic to a Euclidean space and a sphere respectively, but we will not need these facts.
\item The set of weighted closed geodesics $c\cdot\gamma$ with $c>0$ is dense in $\CM\CL(S)$.
\item The geometric intersection number of two essential curves extends to a continuous bilinear symmetric map $\iota:\CM\CL(S)\times\CM\CL(S)\to\BR_{\ge 0}$, the {\em intersection form}.
  \item A concrete distance on $\CM\CL(S)$ can be obtained as follows: it is known (see for example \cite{Hamenstadt}) that there are finite collections $\CA$ of curves which {\em separate measured laminations}, by what we mean that for any two distinct $\mu,\mu'\in\CM\CL(S)$ there is $\alpha\in\CA$ with $\iota(\mu,\alpha)\neq\iota(\mu',\alpha).$ Such a finite collection $\CA$ separating measured laminations induces a distance
\begin{equation}\label{distance CA}
d_\CA(\mu,\mu')=\max_{\alpha\in\CA}\vert\iota(\mu,\alpha)-\iota(\mu',\alpha)\vert
\end{equation}
     on $\CM\CL(S)$.
   \item The mapping class group acts on $\CM\CL(S)$ by homeomorphisms, preserving the intersection form.
   \item It is the key part of Thurston's classification of mapping classes that every pseudo-Anosov mapping class has north-south dynamics on $\CP\CM\CL(S)$. Moreover, the two fixed points $\lambda_{\pm}$ are such that $\iota(\lambda_{\pm},\gamma)>0$ for every simple curve $\gamma$. 
\end{itemize}

Although there has been no sign of it in what we said so far, la raison d'\^etre of this paper is that measured laminations behave really differently in the non-orientable compared to the orientable setting. The reason behind it all is the following theorem of Scharlemann \cite{Scharlemann}:

\begin{sat*}[Scharlemann]
Let $S$ be a non-orientable hyperbolic surface and let $\lambda\in\CM\CL(S)$ be a measured lamination in $S$. If the support of $\lambda$ contains a one-sided closed geodesic $\gamma$, then $\gamma$ is also contained in the support of every measured lamination in a neighborhood of $\lambda$.
\end{sat*}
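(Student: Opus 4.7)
The plan is to exploit the geometry of the Möbius-band neighborhood of the one-sided geodesic $\gamma$ together with continuity of intersection pairings on $\CM\CL(S)$.

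I would first establish a topological lemma: if $N$ is a regular Möbius-band neighborhood of $\gamma$ with boundary $\partial N$, then any simple arc in $N$ with both endpoints on $\partial N$ crosses $\gamma$ at most once. The proof is by lifting to the orientation double cover $\tilde N\to N$, which is an annulus whose core double-covers $\gamma$: a simple arc in $N$ lifts to one or two disjoint simple arcs in $\tilde N$, each of which (being a simple arc in an annulus from boundary to boundary) crosses the core at most once, and tracking the covering degree forces the original arc to cross $\gamma$ at most once.

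Assume for contradiction that a sequence $\mu_n\to\lambda$ satisfies $\gamma\notin\supp(\mu_n)$ for all $n$. Since each $\supp(\mu_n)$ is closed and disjoint from $\gamma$, no leaf of $\mu_n$ is asymptotic to $\gamma$; hence every leaf of $\mu_n$ meeting $N$ enters and exits through $\partial N$ in finite time, and $\supp(\mu_n)\cap N$ is a disjoint union of simple geodesic arcs with endpoints on $\partial N$. Choose a short transverse arc $I\subset N$ meeting $\gamma$ at a single point, with endpoints outside $\supp(\lambda)$ (so that $\iota(\cdot,I)$ is continuous at $\lambda$) and so short that each arc of $\supp(\mu_n)\cap N$ meets $I$ at most once. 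Each such arc contributes $2$ to $\iota(\mu_n,\partial N)$ and at most $1$ to $\iota(\mu_n,I)$, yielding the key inequality
\[
\iota(\mu_n,\partial N)\;\geq\;2\,\iota(\mu_n,I).
\]
Passing to the limit using continuity of both sides at $\lambda$ gives $\iota(\lambda,\partial N)\geq 2\iota(\lambda,I)$. Writing $\lambda=c\gamma+\lambda'$ with $c$ the atomic weight of $\gamma$, the atom $c\gamma$ contributes $c$ to $\iota(\lambda,I)$ but nothing to $\iota(\lambda,\partial N)$ (since $\gamma\cap\partial N=\emptyset$), so $\iota(\lambda',\partial N)\geq 2c+2\iota(\lambda',I)$.

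The cleanest case is when $\gamma$ is an isolated leaf of $\lambda$, i.e.\ $\supp(\lambda')$ is disjoint from $\gamma$: shrinking $N$ pushes $\iota(\lambda',\partial N)$ to zero, giving the contradiction $0\geq 2c>0$. The main obstacle I anticipate is the non-isolated case, where leaves of $\lambda'$ accumulate on $\gamma$ and contribute positively to $\iota(\lambda,\partial N)$ for every $N$; I expect to resolve this by running the argument with a nested family $N_k\searrow\gamma$ and $I_k\searrow\{x_0\}$, arguing that each accumulating leaf contributes matching quantities to both sides of the inequality for large $k$, so the non-atomic contributions cancel in the limit and only $2c$ survives. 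The borderline situation $c=0$ (where $\gamma\in\supp(\lambda)$ only as an accumulation of other leaves) is handled by a variant: one shows that a positive transverse mass of $\mu_n$ must concentrate near $\gamma\cap I$ in the limit, and combined with the Möbius-band inequality this again forces $\gamma\in\supp(\mu_n)$ for large $n$.
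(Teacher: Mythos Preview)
Your approach has a genuine gap at the key inequality. First a minor issue: your topological lemma is false for arbitrary simple arcs. A simple arc in an annulus from boundary to boundary can cross the core any odd number of times --- for instance the graph $t\mapsto(t,f(t))$, $t\in[0,\tfrac12]$, with $f$ oscillating across $\tfrac12$ --- so the double-cover argument does not work. The statement you actually need, that a \emph{geodesic} arc in $N$ meets the geodesic $\gamma$ at most once, is true and follows immediately by lifting to the universal cover of $S$: two geodesics in $\BH^2$ meet at most once.

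The serious problem is the step ``each arc of $\supp(\mu_n)\cap N$ meets $I$ at most once'', and hence the inequality $\iota(\mu_n,\partial N)\ge 2\,\iota(\mu_n,I)$. Although each geodesic arc of $\supp(\mu_n)\cap N$ crosses $\gamma$ at most once, it can wind around the M\"obius band many times while staying on one side of $\gamma$, crossing your short transversal $I$ each time it goes around. For fixed $n$ the winding is bounded, but there is no uniform bound and hence no single $I$ that works for all $n$. In fact this winding is exactly what your contradiction hypothesis forces: in the isolated case you yourself compute $\iota(\mu_n,I)\to c>0$ while $\iota(\mu_n,\partial N)\to 0$, so the arcs must cross $I$ more and more often per entry into $N$, and your inequality fails for large $n$ rather than producing $0\ge 2c$.

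The fix is to replace the arc $I$ by a \emph{closed} essential curve meeting $\gamma$ exactly once: intersection with a closed curve is continuous on $\CM\CL(S)$, and a closed curve cannot be crossed extra times by winding inside a neighborhood. No such curve exists in the M\"obius band $N$ itself, but one does exist in any two-holed projective plane $P\subset S$ with core $\gamma$, namely the dual one-sided curve $\eta$. One then checks that $\gamma$ is an atom of $\lambda$ if and only if $\iota(\lambda,\eta)>\max_{\alpha\subset\partial P}\iota(\lambda,\alpha)$, a manifestly open condition. This is the paper's argument.
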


Since this result is of capital importance for our paper, let us sketch the proof. A first basic fact is that every two-holed projective plane $P$ contains exactly two homotopy classes of essential curves: both of them are one-sided and they intersect each other once. If we declare one of them to be {\em the core of $P$}, then the other one is {\em dual to the core}. Now, the basic observation is that any one-sided curve $\gamma$ arises as the core of some two-holed projective plane, and if $P\subset S$ is such a two-holed projective plane with core $\gamma$ and if $\eta\subset P$ is dual to $\gamma$, then a measured lamination $\lambda\in\CM\CL(S)$ has $\gamma$ as an atom if and only if 
\begin{equation}\label{eq Scharlemann}
\max_{\alpha\subset\D P}\iota(\lambda,\alpha)<\iota(\lambda,\eta)
\end{equation}
This inequality is an open condition and this proves the theorem. 

\begin{bem}
Note also that if $c\cdot\gamma$ is a one-sided atom of $\lambda$ and if $P$ and $\eta$ are as above, then $\gamma$ has weight $c=2\cdot(\iota(\lambda,\eta)-\max_{\alpha\subset\D P}\iota(\lambda,\alpha))$ and this quantity depends continuously on $\lambda$. This means that not only are one-sided atoms stable, but also their weight.
\end{bem}

Before moving on we should point out that Danthony-Nogueira \cite{Danthony-Nogueira} proved a much stronger version of Scharlemann's theorem: the set of measured laminations with a one-sided atom is not only open but also has full measure, and hence is dense.

\subsection{Non-orientable exceptional surfaces}\label{subsec exceptional}

As mentioned already, there are exactly three non-orientable surfaces with $\chi(S)=-1$, namely the two-holed projective plane (i.e. the one-holed M\"obius band), the one-holed connected sum of two projective planes (i.e. one-holed Klein bottle), and the (closed) connected sum of three projective planes (i.e. the connected sum of a torus and a M\"obius band). We denote a surface of genus $g$ and $r$ punctures as $N_{g,b}$ and so since $\chi(S)=2-g-r$, these surfaces are $N_{1, 2}$, $N_{2, 1}$ and $N_{3,0}$ respectively. Below we describe their associated spaces of measured laminations and mapping class groups. We refer to \cite{Matthieu2, Matthieu3} for more details.

We already discussed  $N_{1,2}$ above: there are only two simple closed curves $\gamma$ and $\eta$, both of which are one-sided, and $\gamma$ and $\eta$ intersect. Moreover, any non-compact simple geodesic must spiral towards either a boundary component or one of these closed curves. It follows that every measured lamination is either a multiple of $\gamma$ or a multiple of $\eta$. In particular, $\CM\CL^+(N_{1, 2})=\emptyset$ and $\CP\CM\CL(N_{1,2}) = \CP\CM\CL^-(N_{1,2})$ consists of two points. The mapping class group $\Map(N_{1,2})$ is the finite group $\mathbb{Z}/2\mathbb{Z}\times \mathbb{Z}/2\mathbb{Z}$. 

Next we consider $N_{2, 1}$, the one-holed Klein bottle. There is exactly one two-sided simple closed curve $\alpha$ and any other simple closed curve intersects it. Letting $\gamma_0$ be a one-sided closed curve, all other one-sided closed curves are obtained from $\gamma_0$ through Dehn twists along $\alpha$ and hence the set of one-sided closed curves is given by the bi-infinite sequence $(\gamma_n)$ where $\gamma_n=D^n_{\alpha}(\gamma_0)$ for each $n\in\mathbb{Z}$. Note that $\gamma_n$ is disjoint from exactly two other closed curves: $\gamma_{n-1}$ and $\gamma_{n+1}$. Moreover, any non-compact simple geodesic spirals towards one of the closed curves or around the boundary component. Consequently, any measured lamination is a simple multicurve having at most two components. In particular $\CP\CM\CL^+(N_{2,1})$ consists of one point and any $[\lambda]\in\CP\CM\CL^-(N_{2,1})$ is of the form $[t\gamma_n + (1-t)\gamma_{n+1}]$ for some $n$ and $t\in[0,1]$. Moreover, $\gamma_n$ converges projectively to $\alpha$ as $n$ tends to $\pm\infty$. So $\CP\CM\CL(N_{2, 1})$ is a circle with a marked point corresponding to $\alpha$. The mapping class group $\Map(N_{2,1})$ is $D_{\infty}\times \mathbb{Z}/2\mathbb{Z}$, where $D_{\infty}$ is the infinite dihedral group. 

\begin{bem}
We stress that neither $N_{1,2}$ nor $N_{2,1}$ carry measured laminations whose support has a non-compact leaf.
\end{bem}

Finally we deal with the connected sum of three projective planes. In this surface there exists a unique one-sided closed curve $\gamma$ disjoint from every two-sided closed curve and such that $N_{3,0}\setminus\{\gamma\}$ is orientable.  Note that $N_{3, 0}\setminus\gamma$ is a one-holed torus $T$. Hence we can embed $\CM\CL(T)$ into $\CM\CL(N_{3,0})$. Moreover, any two-sided simple closed curve is homotopic to one in $T$, and in fact, any $\lambda\in\CM\CL^+(N_{3,0})$ is contained in $T$. Any other one-sided curve intersects $\gamma$ in one point. We identify the circle $\CP\CM\CL(T)$ with  $\CP\CM\CL^+(N_{3,0})$ in the 2-sphere $\CP\CM\CL(N_{3,0})$ and it divides $\CP\CM\CL^-(N_{3,0})$ into two disks: one corresponding to measured laminations having $\gamma$ as a leaf and the other corresponding to all multicurves having a one-sided leaf other than $\gamma$ as a component. Note that since every mapping class  must fix $\gamma$ we can identify $\Map(N_{3,0})$ with $\Map(T)$, the (full) mapping class group of the one-holed torus.

\section{Uniform train tracks}\label{sec:uniform train tracks}

Throughout this section we assume that $S$ is a connected non-exceptional hyperbolic surface---we will write $\ell(\cdot)=\ell_S(\cdot)$ for lengths in $S$ of curves, arcs, and measured laminations. We assume also that $\lambda\in\CL(S)$ is a geodesic lamination without closed leaves. If the reader prefers, they can think of $\lambda$ being recurrent (that is, the support of a measured lamination) and connected, and of $S$ being non-orientable.

In the informal version of Theorem \ref{prop lm} in the introduction we said something about a sufficiently nice train track. Our goal now is to clarify what kind of train tracks we will consider and prove two technical lemmas needed in later sections. We start by recalling what train tracks are.

\subsection{Train tracks}\label{sec train track} 
A {\em train track} $\tau$ in a hyperbolic surface $S$ is a smoothly embedded graph such that every vertex is contained in the interior of a smoothly embedded arc $I\subset\tau$ and such that none of its complementary regions is a disk with at most 2 cusps, an annulus, a M\"obius band, or a disk with at most one puncture. See \cite{book}, or rather \cite{Hatcher-paper,Penner-Harer}, for details.

Except as an aid in a construction in the appendix, we will always assume that train tracks don't have superflous vertices: connected components homeomorphic to a circle have a single vertex and all other connected components have vertices which are at least trivalent. This implies that the complexity of a train track $\tau$ is bounded by the topology of the surface $S$ it lives in. We can for example bound the possible number of vertices by noting that each complementary region of $\tau$ contributes a multiple of $-\pi$ to the Gau\ss-Bonnet integrand and has at most 3 cusps:
$$\vert V(\tau)\vert = \text{ number of vertices in }\tau\le 6\vert\chi(S)\vert.$$
Since both the number of vertices and the number of edges are maximal if the train track is trivalent, we also get the following bound on the number of edges: 
$$\vert E(\tau)\vert= \text{ number of edges in }\tau\le 9\vert\chi(S)\vert.$$
These bounds imply that, from a combinatorial point of view, there are only finitely many train tracks. Since we will need this fact here and there, we record it as a lemma:

\begin{lem}\label{lem finite train tracks}
The surface $S$ has only finitely many mapping class group orbits of train tracks. \qed
  \end{lem}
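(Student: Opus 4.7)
The plan is to encode a train track $\tau\subset S$ by a finite piece of combinatorial data and invoke the change of coordinates principle to pass from homeomorphism types of the pair $(S,\tau)$ to $\Map(S)$-orbits.

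First I would fix a small regular ``ribbon'' neighborhood $N(\tau)\subset S$ of the train track. Up to homeomorphism of pairs, $N(\tau)$ together with $\tau$ is determined by: the underlying abstract graph (vertices and edges of $\tau$), the tangency data at each vertex (which half-edges are smoothed together), and the cyclic-or-reflecting order of half-edges at each vertex needed to reconstruct the fattening. Because of the bounds $|V(\tau)|\le 6|\chi(S)|$ and $|E(\tau)|\le 9|\chi(S)|$ recalled just before the lemma, there are only finitely many possibilities for this combinatorial datum.

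Next I would handle the complement. Each component of $S\setminus N(\tau)$ is a surface with boundary whose homeomorphism type is constrained by the train track axioms (no disks with at most two cusps, no annuli, no M\"obius bands, no once-punctured disks) and by the fact that its Euler characteristics must sum, together with $\chi(N(\tau))$, to $\chi(S)$. By the classification theorem there are finitely many choices for the homeomorphism type of each complementary component and finitely many ways in which their boundaries can be identified with the boundary circles of $N(\tau)$; so altogether $(S,\tau)$ has only finitely many possible homeomorphism types.

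Finally, I would invoke the change of coordinates principle. Given two train tracks $\tau,\tau'\subset S$ realising the same combinatorial datum, the combinatorial datum provides a homeomorphism $N(\tau)\to N(\tau')$ carrying $\tau$ onto $\tau'$; it restricts to a homeomorphism between the corresponding boundary circles, and because the complementary surfaces are pairwise homeomorphic by the classification theorem, this boundary map extends over each complementary piece. Gluing produces a self-homeomorphism of $S$ sending $\tau$ to $\tau'$, hence a mapping class doing the same. Therefore $\Map(S)$-orbits of train tracks are in bijection with homeomorphism types of pairs $(S,\tau)$, of which there are finitely many.

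The main obstacle is not conceptual but bookkeeping: one must verify that the classification-theorem extension can be chosen compatibly on all boundary circles simultaneously, which in the non-orientable setting requires a little care because complementary pieces may themselves be non-orientable and their boundary components may be permuted by self-homeomorphisms; these issues are, however, absorbed by building this permutation data into the combinatorial datum from the start.
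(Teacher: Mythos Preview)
Your proposal is correct and is essentially a fleshed-out version of the paper's own argument, which consists only of the sentence immediately preceding the lemma: ``These bounds imply that, from a combinatorial point of view, there are only finitely many train tracks.'' You have simply made explicit the change-of-coordinates step that the paper leaves implicit.
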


A lamination $\lambda$ is {\em carried} by a train track $\tau$ if there is a continuous map
\begin{equation}\label{eq:lamination carrying homotopy}
\Psi:[0,1]\times\lambda\to S,\ \ (t,x)\mapsto\Psi_t(x)
\end{equation}
such that $\Psi_t$ is an embedding for all $t\in[0,1)$, $\Psi_0$ is the identity and $\Psi_1$ is an immersion with image contained in $\tau$. If the support of a measured lamination is carried by $\tau$ then we say that the measured lamination itself is carried by $\tau$. The set of measured laminations carried by $\tau$ is denoted by $\CM\CL(\tau)$. It is a closed subset of $\CM\CL(S)$. Also, the set of weighted multicurves is dense in $\CM\CL(\tau)$---recall that one of the main results of this paper, Theorem \ref{main theorem}, is to understand what is the closure of the set of weighted {\em two-sided} multicurves.

  Still with the same notation suppose that a measured lamination $\lambda\in\CM\CL(\tau)$ is carried by the train track $\tau$, let $e$ be an edge of $\tau$, and let $I$ be a smooth segment meeting $\tau$ transversally in a single point of $e$. The {\em weight} $\omega_\lambda(e)$ of $\lambda$ on $e$ is then the limit
$$\omega_\lambda(e)\stackrel{\rm def}=\lim_{t\to 1}\iota(\lambda,\Psi_t^{-1}(I)),$$
  where $\Psi_t$ is as in \eqref{eq:lamination carrying homotopy}. It is well-known that $\lambda\in\CM\CL(\tau)$ is uniquely determined by the vector $(\omega_\lambda(e))_{e\in E(\tau)}\in\BR_{\ge 0}^{E(\tau)}$ whose entries are the weights of the edges of $\tau$ with respect to $\lambda$. We denote by
  $$W(\tau)=\{(\omega_\lambda(e))_{e\in E(\tau)}\in\BR_{\ge 0}^{E(\tau)}\text{ with }\lambda\in\CM\CL(\tau)\}$$
the set of all the so-obtained vectors. It is well-known that the map $\CM\CL(\tau)\to W(\tau)$ is a homeomorphism and that $W(\tau)$ is the intersection of a linear subspace of $\BR^{E(\tau)}$, the set of solutions of the switch equations, with the positive quadrant $\BR_{\ge0}^{E(\tau)}$.

\subsection{Almost geodesic train tracks}\label{sec uniform}
  A train track $\tau$ in our hyperbolic surface $S$ is {\em $\epsilon_0$-geodesic} if the edges have geodesic curvature less than $\epsilon_0$ at every point. We will fix $\epsilon_0$ such that the following holds:
\begin{itemize}
\item[(*)] If $\gamma$ is any bi-infinite path in the hyperbolic plane with geodesic curvature less than $\epsilon_0$ and if $\gamma_*$ is the geodesic which stays at bounded distance of $\gamma$, then the closest point projection $\gamma_*\to\gamma$ is well-defined, moves points less than distance 1 away, and distorts length by at most a factor 2.
\end{itemize}
Fixing $\epsilon_0$ once and forever so that all of this holds, we will refer to an $\epsilon_0$-geodesic train track as an {\em almost geodesic train track}.

The reason to consider $\epsilon_0$ such that (*) holds is that if $\tau$ is an almost geodesic train track and if $\lambda$ is a lamination carried by $\tau$, then applying to each leaf the closest point projection to its image under $\Psi_1$ as in \eqref{eq:lamination carrying homotopy} we get a continuous map
\begin{equation}\label{eq: carrying map}
  \Phi=\Phi^{\lambda,\tau}:\lambda\to\tau
\end{equation}
satisfying:
\begin{itemize}
\item $d_S(\Phi(x),x)\le 1$ for all $x\in\lambda$, and
\item the restriction of $\Phi$ to each individual leaf of $\lambda$ is a smooth immersion satisfying $\frac 12\le\vert\frac{\D\Phi}{\D t}\vert\le 2$ at every point.
\end{itemize}
The map $\Phi$ as in \eqref{eq: carrying map} is the {\em carrying map} of $\lambda$ in $\tau$. With this notation we say that $\lambda$ {\em fills} $\tau$ if $\Phi$ is surjective---if this is the case we say that $\tau$ carries $\lambda$ {\em in a filling way}.
\medskip

\begin{bem}
It is well known that for any lamination $\lambda$ there is an almost geodesic train track $\tau$ carrying $\lambda$, say in a filling way, and that if $\lambda$ has no closed leaves we can find such a $\tau$ whose edges are as long as we want. There are different ways to construct such a train track, but for example one can follow Thurston \cite[Section 8.9]{Thurston-notes} and take a neighborhood of $\lambda$ and then collapse it along a foliation whose leaves are short segments transversal to $\lambda$. Another way is to create a one-vertex train track using the first return map to a short transversal of $\lambda$. We encourage the reader to think of yet other ways---there are many. 
\end{bem}

Before moving on note that if $\lambda\in\CM\CL(\tau)$ is carried by an almost geodesic train track $\tau$ then we have
\begin{equation}\label{eq combinatorial length}
\ell(\lambda)\le \sum_{e\in E(\tau)}\omega_\lambda(e)\cdot\ell(e)\le 2\cdot\ell(\lambda).
\end{equation}
We will often want to replace the actual length of the lamination by the middle term in \eqref{eq combinatorial length}---this is one of the reasons explaining why factors of 2 keep appearing in this paper.

\subsection{Uniform train tracks}\label{sec uniform2}

The chain of inequalities \eqref{eq combinatorial length} probably makes apparent that almost geodesic train tracks are geometric objects---objects that reflect the geometry of $S$. It is often however very comfortable to work with train tracks as if they were purely combinatorial objects. In some sense, {\em uniform train tracks} are the kind of train tracks where these two points of view are married. In a nutshell, a uniform train track is an almost geodesic train track all of whose edges have comparable lengths. The actual definition is a bit more subtle because we are using $\lambda$ to measure the length of edges. That is why we first need some terminology.

With notation as above, suppose that $\tau$ is an almost geodesic train track carrying our lamination $\lambda$, and let $\Phi:\lambda\to\tau$ be the carrying map. A pair $\{e,e'\}$ consisting of two half-edges of $\tau$ based at the same vertex is said to be a {\em turn} of $\tau$. A turn is {\em $\lambda$-legal} if it is represented by $\Phi(I)$ for some smooth segment $I\subset\lambda$. An edge $e\in E(\tau)$ of $\tau$ is {\em $\lambda$-loopy} if both its half-edges are based at the same vertex and form a $\lambda$-legal turn---note that a $\lambda$-loopy edge is necessarily two-sided, see Figure \ref{pic loopy}. Finally we will say that $\tau$ is {\em $\lambda$-generic} if it has no bivalent vertices, if all its vertices have at most valence 4, and if moreover all valence 4 vertices correspond to $\lambda$-loopy edges.

\begin{figure}[h]
\centering
\includegraphics[width=0.8\textwidth]{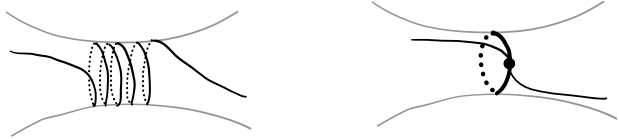}
\caption{On the left a local picture of a lamination $\lambda$; on the right a local picture of a train track $\tau$ carrying $\lambda$. The thick edge is a $\lambda$-loopy edge of $\tau$}
\label{pic loopy}
\end{figure}

\begin{bem}
  If the lamination $\lambda$ is understood from the context then we might simply say that a turn is {\em legal} instead of $\lambda$-legal, that an edge is {\em loopy} instead of $\lambda$-loopy, and that the train track is question is {\em generic} instead of $\lambda$-generic and so on. We stress once again that the simple closed curve associated to a loopy edge is two-sided.
  \end{bem}

Still with the same notation, we define the {\em $\lambda$-length} of a closed edge $e\in E(\tau)$ as follows:
\begin{equation}\label{eq talked to Hugo1}
\ell^{\lambda,\tau}(e)=\max_{I\in\pi_0(\Phi^{-1}(e))}\ell(\Phi(I)).
\end{equation}
Note that the $\lambda$-length $\ell^{\lambda,\tau}(e)$ and the usual length $\ell(e)$ agree if $e$ is not $\lambda$-loopy. It is false if $e$ is loopy---in this case $\ell^{\lambda,\tau}(e)=k\cdot\ell(e)$ for some $k\ge 2$. 

We will write $\ell^{\lambda}(e)$ instead of $\ell^{\lambda,\tau}(e)$ when the train track $\tau$ is understood from the context. Accordingly, there will be no $\tau$ in the superscript in our notation for the $\lambda$-length of $\tau$ itself and for the minimum of the $\lambda$-lengths of the edges of $\tau$:
\begin{equation}\label{eq talked to Hugo2}
\ell^{\lambda}(\tau)=\sum_{e\in E(\tau)}\ell^{\lambda,\tau}(e)\ \text{  and  }\ m^\lambda(\tau)=\min_{e\in E(\tau)}\ell^{\lambda,\tau}(e).
\end{equation}
With this notation in place, we are now ready to formally define what is a {\em $(C,\lambda)$-uniform train track}.

\begin{defi*}
  Suppose that a lamination $\lambda$ in a hyperbolic surface $S$ has no closed leaves. An almost geodesic train track $\tau$ is {\em $(C,\lambda)$-uniform} for some $C>0$ if $\lambda$ is carried by $\tau$ in a filling way and if $\ell^{\lambda}(\tau)\le C\cdot m^\lambda(\tau).$
\end{defi*}

The existence of uniform train tracks, or more specifically the fact that any train track can be refined to a uniform one, will be proved in the appendix of this paper. We say $\tau$ can be {\em refined} to a train track $\tau'$ if there is a smooth map 
$$\Psi: [0,1]\times \tau'\to S$$
such that $\Psi_t$ is an embedding for all $t\in[0,1)$ and such that $\Psi_1$ is an immersion with image $\tau$
(compare to the definition of a lamination being carried by a train track in Section \ref{sec train track}).  With slight abuse of terminology we set $\Phi^{\tau', \tau}=\Psi_1: \tau'\to \tau$ which we call a carrying map and refer to $\tau'$ as a {\em refinement} of $\tau$. With this definition in place, we state the proposition we will prove in Appendix A: 

\begin{prop}\label{prop-useful traintrack}
  Let $S$ be a hyperbolic surface. There is a constant $C>0$ such that for any geodesic lamination $\lambda\subset S$ without closed leaves, any train track $\tau_0$ carrying $\lambda$, and any $L$ large enough, there is a refinement $\tau$ of $\tau_0$ which is $\lambda$-generic, $(C,\lambda)$-uniform and satisfies $\ell^{\lambda}(\tau)\ge L$.
\end{prop}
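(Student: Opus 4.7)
The plan is to construct $\tau$ by choosing a finite family of short transversal arcs to $\lambda$ with carefully controlled spacing, and then collapsing each transversal to a point to obtain a graph embedded near $\lambda$. This is essentially a first-return construction, in the spirit of the Thurston-style collapse alluded to in Section \ref{sec uniform}.

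First, I would use that $\lambda$ is carried by $\tau_0$ to fix a small regular neighborhood $N_0$ of $\tau_0$, foliated by short fibers transverse to $\tau_0$, that contains $\lambda$. Any smooth graph $\tau$ built inside $N_0$ through which $\lambda$ runs can be slid along this foliation onto $\tau_0$; so any train track constructed inside $N_0$ that carries $\lambda$ in a filling way will automatically be a refinement of $\tau_0$, and the refinement condition becomes essentially free.

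Next, for the target $L$, I would choose a family $I_1,\dots,I_m$ of pairwise disjoint short transversal arcs to $\lambda$ inside $N_0$, each of length at most a small $\epsilon>0$ (small enough that condition $(*)$ applies to the resulting collapsed graph), with the property that every arc of a leaf of $\lambda$ of $\lambda$-length $L$ meets $\bigcup_j I_j$ while any two consecutive meetings of a single leaf with $\bigcup_j I_j$ occur at $\lambda$-distance in $[L/2,2L]$. Such a family can be produced greedily: start with any short transversal and iteratively add a new one at any point of $\lambda$ whose forward leaf arc of $\lambda$-length $2L$ still misses the current family. Compactness of $\lambda$ and the absence of closed leaves force this process to terminate in finitely many steps, and the lower spacing bound comes from making sure each new transversal is chosen at maximal allowed distance from the previous ones.

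Collapsing each $I_j$ along the normal foliation of a thin neighborhood to a single vertex then produces a graph $\tau\subset N_0$ whose edges are the collapsed leaf arcs running between consecutive transversal hits. By condition $(*)$ and the choice of $\epsilon$, the graph $\tau$ is almost geodesic and carries $\lambda$ in a filling way. By the spacing condition every edge has $\ell^{\lambda}(e)\in[L/2,2L]$; combined with the universal bound $|E(\tau)|\le 9|\chi(S)|$ from Section \ref{sec train track}, this gives
$\ell^{\lambda}(\tau)\le 18|\chi(S)|\cdot L$ and $m^{\lambda}(\tau)\ge L/2$,
hence $(C,\lambda)$-uniformity with $C=36|\chi(S)|$, a constant depending only on $S$. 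The inequality $\ell^{\lambda}(\tau)\ge L$ holds as soon as $m\ge 2$, which is automatic once $L$ is large.

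The hard part will be arranging $\lambda$-genericity. This is a purely local issue at each vertex: the number of edges incident to the collapsed point coming from $I_j$ is governed by the local branching of $\lambda$ across $I_j$. To cut vertices of valence larger than $4$ down to trivalent ones, one subdivides $I_j$ into shorter transversals so that each piece meets only two branches of the Cantor set $I_j\cap\lambda$; to eliminate bivalent vertices one can nudge such transversals off $\lambda$ entirely, merging the two flanking edges. A valence-$4$ vertex will survive only when a single edge leaves $I_j$ and returns on a short excursion, which is precisely the $\lambda$-loopy configuration that genericity allows. Each of these local moves replaces a few short arcs by a few short arcs and does not perturb the $[L/2,2L]$ length window on the other edges; but the combinatorial bookkeeping required to verify this carefully on every local model is exactly what forces the proof into the appendix rather than into a paragraph here.
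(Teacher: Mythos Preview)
Your transversal-and-collapse approach is quite different from the paper's, which proceeds by iterated local modifications of $\tau_0$ (splitting cusps, combing half-edges, unlooping loopy edges) designed to double $m^\lambda$ at each stage while adding only $O(|\chi(S)|\cdot m^\lambda)$ to $\ell^\lambda$; after enough iterations the ratio $\ell^\lambda/m^\lambda$ stabilises near a constant depending only on $S$, and genericity is arranged afterwards by a further round of subdivisions.

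The genuine gap in your sketch is the treatment of $\lambda$-loopy edges. Your spacing condition on consecutive transversal hits controls the \emph{geometric} length $\ell(e)$ of each edge of the collapsed graph, but uniformity is stated in terms of $\ell^{\lambda}(e)$, and for a loopy edge these differ by the winding number: $\ell^{\lambda}(e)=k\cdot\ell(e)$ where $k\ge 2$ is the maximal number of consecutive times a leaf of $\lambda$ runs around $e$. Nothing in your greedy procedure bounds $k$ in terms of $S$ alone. Concretely, when $\lambda$ spirals tightly, a single short transversal $I_j$ is hit by the same leaf many times in succession, each hit at spacing $\ell(e)\in[L/2,2L]$, and the resulting loopy edge has $\ell^{\lambda}(e)$ of order $kL$ with $k$ unbounded. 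So the assertion ``every edge has $\ell^{\lambda}(e)\in[L/2,2L]$'' is false in general, and the claimed constant $C=36|\chi(S)|$ is not justified. There is a related soft spot one step earlier: placing each new transversal at ``maximal allowed distance from the previous ones'' along one forward leaf arc gives no control over how close it lands to some earlier $I_i$ along a \emph{different} leaf, or along another strand of the same leaf, so the lower spacing bound $L/2$ is itself not established. The paper's iterative scheme is built precisely to sidestep this: rather than trying to prevent long loopy edges from appearing, it lets $m^\lambda$ grow until it catches up with their (fixed) $\lambda$-length, handling short loopy edges along the way with the explicit ``unlooping'' move. The genericity bookkeeping you flag is real, but it is downstream of this more basic failure of the uniformity estimate.
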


It is maybe worth mentioning that the constant $C$ is not obtained via a compactness theorem: one can actually give concrete estimates for it. We will for example see that  one can take $C$ to be $1000\vert\chi(S)\vert$ if we drop the requirement of $\tau$ being generic and, say, $2^{30\vert\chi(S)\vert}$ if we do not. Clearly, we are not trying to obtain optimal bounds.
\medskip 

Proposition \ref{prop-useful traintrack} will be proved in Appendix A but the reader might want to at least try to come up with a proof by themselves. We would love to have simpler argument than the one we give. 

\subsection{Two lemmas on uniform train tracks}
As we mentioned earlier, uniform train tracks are geometric objects (they are almost geodesic) but they act as if they were combinatorial objects (all edges have basically the same length, meaning that up changing our measuring unit from nanometer to kyndemil we can act as if all edges had unit length). We exploit this in the proof of two technical lemmas needed below. 

The first one basically asserts that if a measured lamination carried by a uniform train track spends little of its life in some edge, then one can find another measured lamination nearby which avoids that edge altogether. The second is a very weakly quantified version of Scharlemann's theorem.

\begin{lem}\label{lem projection}
  For every geodesic lamination $\lambda$ and all positive $C$ and $\delta$ there is $\epsilon<\delta$ such that for every $(C,\lambda)$-uniform, $\lambda$-generic train track $\tau$ the following holds: For every unit length $\mu\in\CM\CL(\tau)$ there is a second unit length measured lamination $\nu\in\CM\CL(\tau)$ with
  \begin{enumerate}
    \item $\ell(e)\cdot\vert\omega_\mu(e)-\omega_\nu(e)\vert<\delta$ for all $e\in E(\tau)$, and 
    \item $\omega_\nu(e)=0$ for every $e\in E(\tau)$ with $\ell(e)\cdot \omega_\mu(e)<\epsilon$.
  \end{enumerate}
\end{lem}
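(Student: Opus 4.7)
The plan is to produce $\omega_\nu$ via a compactness argument: if no uniform $\epsilon$ worked, a sequence of counterexamples would yield a limiting configuration from which the required $\omega_\nu$ can be extracted, contradicting the assumption.

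I would first set up the relevant linear algebra. The switch equations cut out a linear subspace $V \subset \BR^{E(\tau)}$ depending only on the combinatorial type of $\tau$, and $W(\tau) = V \cap \BR^{E(\tau)}_{\ge 0}$. For the set of light edges $S := \{e : \ell(e)\omega_\mu(e) < \epsilon\}$, the face $W_S(\tau) := W(\tau) \cap \{\omega : \omega|_S \equiv 0\}$ consists of weight systems vanishing on $S$. I seek $\omega_\nu \in W_S(\tau)$ of unit length close to $\omega_\mu$ in the $\ell$-weighted norm $\|\omega\|_\ell := \max_e \ell(e)|\omega(e)|$, so condition (1) reads $\|\omega_\mu - \omega_\nu\|_\ell < \delta$.

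To carry out the compactness argument, I would assume the lemma fails for some $\delta > 0$: there are sequences $(\tau_n, \omega_n)$ with $\omega_n$ unit length in $W(\tau_n)$, $S_n := \{e : \ell(e)\omega_n(e) < 1/n\}$, and no unit-length $\omega'_n \in W_{S_n}(\tau_n)$ with $\|\omega_n - \omega'_n\|_\ell < \delta$. By Lemma \ref{lem finite train tracks} I may pass to a subsequence where all $\tau_n$ share a combinatorial type $T$ and $S_n = S$ is constant. Normalizing so that $m^\lambda(\tau_n) = 1$ places $\ell^\lambda_n(e) \in [1, C]$, and after a further subsequence $\ell_n \to \ell_\infty$ with $\ell_\infty(e) > 0$ for all $e$ and $\omega_n \to \omega_\infty \in V \cap \BR^{E(T)}_{\ge 0}$. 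Since $\ell_n(e)\omega_n(e) < 1/n \to 0$ on $S$, the limit satisfies $\omega_\infty|_S \equiv 0$, so $\omega_\infty \in W_S(T)$. A small rescaling $c_n \omega_\infty$ with $c_n \to 1$ then yields a unit-length lamination in $\tau_n$ lying within $\delta$ of $\omega_n$ in $\|\cdot\|_\ell$ for large $n$, contradicting the assumption.

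The hard part will be the uniform compactness of the hyperbolic edge lengths, especially over loopy edges. For a $\lambda$-loopy edge $e$, the hyperbolic length $\ell(e) = \ell^\lambda(e)/k$ involves an integer winding number $k \ge 2$, so $\ell_n(e)$ could a priori tend to zero. However, each loopy edge is an essential closed curve in $S$ with $\ell(e) \ge \syst(S)$, and the $(C,\lambda)$-uniform hypothesis gives $\ell^\lambda(e) \le C$; together these bound $k$, and a further subsequence stabilizes its value. The $\lambda$-generic hypothesis fixes loopy edges at $4$-valent vertices with a specific local picture, keeping the combinatorial bookkeeping manageable. Finally, positivity of $c_n \omega_\infty$ on $E(\tau_n) \setminus S$ follows because $\omega_n(e) \ge \epsilon/\ell(e)$ there, and the limit preserves this lower bound.
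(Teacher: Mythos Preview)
Your overall strategy---argue by contradiction, pass to a subsequence sharing a fixed combinatorial type, extract a limiting weight vector, and transport it back---is exactly the paper's approach. The gap is in your handling of loopy edges.

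Your bound on the winding number $k$ mixes two incompatible normalizations. The inequality $\ell(e)\ge\syst(S)$ holds in the \emph{original} hyperbolic metric on $S$, while $\ell^\lambda(e)\le C$ holds only \emph{after} you have rescaled so that $m^\lambda(\tau_n)=1$. In the original metric one only has $\ell^\lambda(e)\le C\cdot m^\lambda(\tau_n)$, and $m^\lambda(\tau_n)$ is not bounded over all $(C,\lambda)$-uniform train tracks (indeed, Proposition~\ref{prop-useful traintrack} produces such train tracks with $m^\lambda$ as large as one likes). In the rescaled picture, the systole lower bound becomes $\syst(S)/m^\lambda(\tau_n)\to 0$. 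Either way, $k_n=\ell^\lambda(e)/\ell(e)$ is genuinely unbounded. Once $k_n\to\infty$ you have $\ell_n(e)\to 0$ for that loopy edge, so $\omega_n(e)$ need not stay bounded (the constraint is only $\ell_n(e)\,\omega_n(e)\le 2$), and your compactness step for $\omega_n$ fails precisely there. Your claim that $\ell_\infty(e)>0$ for all $e$ is therefore unjustified.

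The paper sidesteps this by observing that, because $\tau$ is $\lambda$-generic, the loopy edges form a disjoint union of circles $\tau^{**}$ and one has a splitting $W(\tau)=W(\tau^*)\oplus W(\tau^{**})$: the switch equations impose no coupling between loopy and non-loopy weights. On $\tau^{**}$ the problem is then trivial---each loopy weight can be set to zero or left alone independently---while on $\tau^*$ every edge is non-loopy, so $\ell(e)=\ell^\lambda(e)\in[m^\lambda(\tau),C\cdot m^\lambda(\tau)]$ and your compactness argument goes through verbatim. Insert this reduction and your proof is complete. (As a minor aside: you do not need positivity of $\omega_\infty$ on $E(T)\setminus S$; condition~(2) only demands vanishing on $S$, and non-triviality of $\omega_\infty$ already follows from $\sum_e\ell_\infty(e)\,\omega_\infty(e)\ge 1$.)
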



We encourage the reader to find counter examples to Lemma \ref{lem projection} if one drops the assumption that $\tau$ is uniform.

\begin{proof}
To begin with let $\tau^*$ be the train track obtained from $\tau$ by removing all open $\lambda$-loopy edges and let $\tau^{**}$ be the union of the closed $\lambda$ loopy edges. In other words, $\tau^{**}$ is a disjoint union of circles, $\tau=\tau^*\cup\tau^{**}$ and $\tau^*$ and $\tau^{**}$ only meet at the vertices of $\tau^{**}$. Note also that not only do we have $\BR_{\ge 0}^{E(\tau)}=\BR_{\ge 0}^{E(\tau^*)}\oplus\BR_{\ge 0}^{E(\tau^{**})}$ but also
  $$W(\tau)=W(\tau^*)\oplus W(\tau^{**}).$$
  Said in plain terms, every solution of the switch equations of $\tau^*$ and every solution of the switch equations of $\tau^{**}$ give, when taken together, a solution to the switch equations of $\tau$, and every solution arises in this way.

  All of this means that, up to maybe solving the problem for some smaller $\delta$, we can assume that $\tau$ is either the union of disjoint circles or a trivalent train track which has thus no loopy edges. For the sake of concreteness we will assume that we are in the second case---the case that $\tau$ is a union of disjoint circles can be dealt with using exactly the same argument, but we are sure that the reader can come up with a much simpler one. 

Anyways, arguing by contradiction, suppose that the claim fails to be true for some $\lambda$, $C$ and $\delta$, meaning that there are $\epsilon_i\to 0$ such that for each $\epsilon_i$ one can find some
  \begin{itemize}
  \item $(C,\lambda)$-uniform trivalent train track $\tau_i$,
  \item and $\mu_i\in\CM\CL(\tau_i)$ with $\ell(\mu_i)=1$
  \end{itemize}
  for which we cannot find any $\nu_i\in\CM\CL(\tau_i)$ satisfying (1) and (2) in the statement for $\epsilon=\epsilon_i$.

  We get from Lemma \ref{lem finite train tracks} that the surface $S$ contains only finitely many homeomorphism types of train tracks. This means that, up to passing to a subsequence, we can assume that there is some train track $\tau$ and for each $i$ a homeomorphism $\phi_i:S\to S$ with $\phi_i(\tau)=\tau_i$. The map $\phi_i$ induces a linear isomorphism---denoted by the same symbol---between the associated spaces of solutions of the switch equations:
  $$\phi_i:W(\tau)\to W(\tau_i)$$
  Our $\mu_i$ gives us a vector $\omega_{\mu_i}=(\omega_{\mu_i}(e))_{e\in E(\tau_i)}\in W(\tau_i)$. Consider the scaled vector
  $$w_i=\ell(\tau_i)\cdot\omega_{\mu_i}$$
  and note that
\begin{align*}
\sum_{e\in E(\tau_i)} w_i(e)&=\sum_{e\in E(\tau_i)}\ell(\tau_i)\cdot\omega_{\mu_i}(e)\ge\sum_{e\in E(\tau_i)}\ell(e)\cdot\omega_{\mu_i}(e)\ge\ell(\mu_i)=1
\end{align*}
where the final inequality holds because of \eqref{eq combinatorial length}.

To get a bound in the other direction note that, since $\tau_i$ is not only $(C,\lambda)$-uniform but also trivalent and hence has no loopy edges, we have $\ell(\tau_i)=\ell^\lambda(\tau_i)\le C\cdot\ell^\lambda(e)=C\cdot\ell(e)$ for every edge $e\in E(\tau_i)$. This implies that
  \begin{align*}
  \sum_{e\in E(\tau_i)} w_i(e)
  &=\sum_{e\in E(\tau_i)}\ell(\tau_i)\cdot\omega_{\mu_i}(e)\\
  &\le C\cdot\sum_{e\in E(\tau_i)}\ell(e)\cdot\omega_{\mu_i}(e)\\
  &\le C\cdot 2\cdot\ell(\mu_i)=2\cdot C
  \end{align*}
where again the last inequality holds because of \eqref{eq combinatorial length}. Now, the chain of inequalities
$$1\le\sum_{e\in E(\tau_i)}w_i(e)\le 2\cdot C$$
implies that the elements $(\phi_i)^{-1}(w_i)\in W(\tau)$, for all $i$, belong to a compact set and that, up to passing to a subsequence, the limit
  $$w=\lim_i(\phi_i)^{-1}(w_i)\in W(\tau)$$
exists and is non-zero. Now, if we let $\hat\nu_i\in\CM\CL(\tau_i)$ be the measured lamination corresponding to $\phi_i(w)$, then $\nu_i=\frac 1{\ell(\hat\nu_i)}\hat{\nu}_i$ satisfies (1) and (2) for $\epsilon=\frac\delta 2$ and all sufficiently large $i$. This is a contradiction to our assumption---we are thus done with the proof of Lemma \ref{lem projection}.
\end{proof}

We proceed now to our (rather weak) quantification of Scharlemann's theorem:

\begin{lem}\label{lem quantified Scharlemann}
For all $C$ and $\epsilon$ positive there is $\delta<\epsilon$ such that if $\tau\subset S$ is a $(C,\lambda)$-uniform $\lambda$-generic train track, if $\mu,\nu\in\CM\CL(\tau)$ are measured laminations carried by $\tau$ with
$$\ell(e)\cdot\vert\omega_\mu(e)-\omega_\nu(e)\vert<\delta$$
for all $e\in E(\tau)$, and if $\nu$ is such that
\begin{itemize}
\item $\nu$ has unit length,
\item $\nu$ has a one-sided atom with at least weight $\frac\epsilon{\ell(\tau)}$, and
\item $\omega_\nu(e)=0$ for every loopy edge $e$, 
\end{itemize}
then $\mu$ has a one-sided atom.
\end{lem}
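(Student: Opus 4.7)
The plan is to apply Scharlemann's criterion to $\nu$ and propagate the resulting inequality to $\mu$. Since $\nu$ has a one-sided atom $c\cdot\gamma$ with $c\geq \epsilon/\ell(\tau)$, fix a two-holed projective plane $P\subset S$ with core $\gamma$ and a dual one-sided curve $\eta$ so that
$$\iota(\nu,\eta)-\max_{\alpha\subset\D P}\iota(\nu,\alpha)\;=\;c/2\;\geq\;\epsilon/(2\ell(\tau)).$$
It suffices to show that, for each $\xi\in\{\eta\}\cup\pi_0(\D P)$, one has $|\iota(\mu,\xi)-\iota(\nu,\xi)|<c/4$: then the same strict inequality holds for $\mu$ with the same $P$ and $\eta$, and Scharlemann's criterion forces $\mu$ to have a one-sided atom.

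These differences are bounded by the standard estimate
$$|\iota(\mu,\xi)-\iota(\nu,\xi)|\;\leq\;\sum_{e\in E(\tau)}|\omega_\mu(e)-\omega_\nu(e)|\cdot|\xi\cap e|\;<\;\delta\sum_e\frac{|\xi\cap e|}{\ell(e)},$$
so the crux of the proof is to choose $P$ and $\eta$ with $\sum_e|\xi\cap e|/\ell(e)\leq K/\ell(\tau)$ for a constant $K=K(C,\chi(S))$; taking $\delta<\epsilon/(8K)$ then closes the argument. I would construct $P$ and $\eta$ explicitly around the cycle $\gamma^*\subset\tau$ carrying $\gamma$: let $M$ be a thin almost-geodesic M\"obius band neighborhood of $\gamma^*$; pick a point $x$ in the interior of some edge $e_0$ of $\gamma^*$, far from the vertices of $\tau$; let $\eta$ be the closed curve formed by an arc crossing $\gamma^*$ perpendicularly at $x$ together with a short return arc through $S\setminus M$; and take $P$ as a thin regular neighborhood of $\gamma^*\cup\eta$. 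The hypothesis $\omega_\nu(e)=0$ on loopy edges forces every edge of $\gamma^*$ to be non-loopy, so $\ell(e)=\ell^\lambda(e)\geq\ell^\lambda(\tau)/C\geq\ell(\tau)/(9C|\chi(S)|)$ by $(C,\lambda)$-uniformity; thus all edge-lengths near $\gamma^*$ are comparable to $\ell(\tau)$, the curve $\eta$ meets $\tau$ in $O(1)$ points and each component of $\D P$ meets $\tau$ in $O(|\chi(S)|)$ points, and the claimed bound on $\sum_e|\xi\cap e|/\ell(e)$ follows.

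The step I expect to be the most delicate is realizing $P$ and $\eta$ hyperbolically with the required intersection pattern: the perpendicular arc must sit safely inside $e_0$ so that it crosses no spoke of $\tau$ inside $M$, and the return arc through $S\setminus M$ must be short enough to cross only boundedly many edges. A cleaner fall-back, if the explicit construction becomes awkward, would be a contradiction-and-finiteness argument in the spirit of Lemma~\ref{lem projection}: extract sequences $(\tau_n,\mu_n,\nu_n)$ violating the conclusion with $\delta_n\to 0$, use Lemma~\ref{lem finite train tracks} to reduce to a fixed combinatorial train track $\tau$ and a fixed cycle $\hat\gamma\subset\tau$ pulling back the atoms, pass to a common limit of the rescaled weights $u_n(e)=\ell(e)\omega_{\nu_n}(e)$ and $u_n^\mu(e)=\ell(e)\omega_{\mu_n}(e)$ in the compact set $\{u\in\BR_{\ge 0}^{E(\tau)}:\sum_e u(e)\le 2\}$, and use openness of the Scharlemann condition in the limit to contradict the assumption that $\mu_n$ carries no one-sided atom.
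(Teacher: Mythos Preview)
Your fall-back is essentially what the paper does, and it is the route that actually works; the direct construction, as you suspected, has a real obstruction. The issue is not the bound on the number of edges in $\gamma^*$ (that bound is $2C/\epsilon$ rather than $O(|\chi(S)|)$, but since $\delta$ may depend on $\epsilon$ this is harmless). The genuine problem is the loopy edges: for a loopy edge $e$ only $\ell^\lambda(e)$, not $\ell(e)$, is bounded below by uniformity, so $1/\ell(e)$ can be arbitrarily large. Your return arc for $\eta$, and certainly the boundary $\partial P$ of a regular neighbourhood of $\gamma^*\cup\eta$, have no reason to avoid $\tau^{**}$, and a single crossing of a short loopy edge destroys the estimate $\sum_e|\xi\cap e|/\ell(e)\le K/\ell(\tau)$. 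There is no uniform hyperbolic construction of $P,\eta$ that controls this across all $(C,\lambda)$-uniform $\tau$; this is precisely why the paper passes through the finiteness reduction.

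In the contradiction argument the paper proceeds just as you outline---extract $(\tau_n,\mu_n,\nu_n,c_n\gamma_n)$, bound the combinatorial length of $\gamma_n$ by $2C/\epsilon$ using $\ell(\nu_n)=1$ and $c_n\ge\epsilon/\ell(\tau_n)$, and use Lemma~\ref{lem finite train tracks} to reduce to a fixed $\tau$, a fixed $\tau^{**}$, and a fixed one-sided $\gamma$ carried by $\tau\setminus\tau^{**}$---but then it does two things differently from your sketch. First, because $\gamma$ is now fixed and disjoint from the two-sided multicurve $\tau^{**}$, one can choose the two-holed projective plane $P$ (and hence $\eta$ and $\partial P$) inside $S\setminus\tau^{**}$; this is exactly what the hypothesis $\omega_\nu(e)=0$ on loopy edges buys you, and it is the step your proposal is missing. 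Second, the paper does \emph{not} pass to a limit of rescaled weights: your rescaling $u_n(e)=\ell(e)\omega_{\nu_n}(e)$ makes the Scharlemann linear functional have $n$-dependent coefficients $|\xi\cap e|/\ell_{\tau_n}(\phi_n(e))$, so a limit of $u_n$ says nothing directly. Instead one estimates for each $n$ that $|\iota(\hat\mu_n,\xi)-\iota(\hat\nu_n,\xi)|\le\text{const}\cdot\max_{e\notin E(\tau^{**})}|\omega_{\hat\mu_n}(e)-\omega_{\hat\nu_n}(e)|\le \text{const}\cdot C/(n\,\ell(\tau_n))$, which for large $n$ is smaller than the Scharlemann gap $c_n/2\ge\epsilon/(2\ell(\tau_n))$; here the restriction to non-loopy edges is legitimate precisely because $P\subset S\setminus\tau^{**}$.
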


The key point of this lemma is that the given $\delta$ does the trick for every train track $\tau$ satisfying the stated conditions. 

\begin{proof}
Seeking a contradiction we assume that the lemma fails for some $C$ and $\epsilon$, meaning that there are
\begin{itemize}
\item a sequence $(\tau_n)$ of $(C,\lambda)$-uniform $\lambda$-generic train tracks,
\item a sequence $(\nu_n)$ of unit length measured laminations carried by $\tau_n$ with $\omega_{\nu_n}(e)=0$ for every loopy edge of $\tau_n$,
\item a sequence $(c_n\cdot\gamma_n)$ of one-sided atoms of $\nu_n$ with weight $c_n\ge\frac \epsilon{\ell(\tau_n)}$, and 
\item a sequence $(\mu_n)$ of measured laminations satisfying 
$$\ell_S(e)\cdot\vert\omega_{\mu_n}(e)-\omega_{\nu_n}(e)\vert<\frac 1n$$
for every edge $e\in E(\tau_n)$ and every $n$, and such that $\gamma_n$ is not a one-sided atom of $\mu_n$.
\end{itemize}
As in the proof of Lemma \ref{lem projection}, let us denote by $\tau_n^{**}\subset\tau_n$ the union of the loopy edges of $\tau_n$ and note that we have
\begin{equation}\label{eq I am really sick of this}
\ell(e)=\ell^{\lambda, \tau_n}(e)\ge \frac 1C\ell^{\lambda}(\tau_n)\ge \frac 1C\cdot\ell(\tau_n)
\end{equation}
for every edge $e\in E(\tau_n)\setminus E(\tau_n^{**})$. Let $K_n$ be the number of times that $\gamma_n$ meets a vertex of $\tau_n$. Since $\nu_n$, and thus $\gamma_n$, do not travel through $\tau_n^{**}$, we get that
$$1=\ell(\nu_n)\ge \ell(c_n\cdot\gamma_n)\ge \frac 12\cdot\frac\epsilon{\ell(\tau_n)}\cdot K_n\cdot\frac 1C\cdot\ell(\tau_n)$$
where the last inequality arises as follows: the factor $\frac 12$ because we are measuring lengths in the almost geodesic train track $\tau_n$, the next factor is the lower bound on the weight $c_n$, next is the number of edges that $\gamma_n$ travels through in $\tau_n$, and finally the lower bound for the length of each such edge. Anyways, cleaning up we get that 
$$K_n\le\frac {2C}\epsilon$$
is uniformly bounded. 

Now, by  Lemma \ref{lem finite train tracks} there are finitely many homeomorphism types of train tracks. Since any train track carries only finitely many curves made out of a given number of edges, we get that, up to passing to a subsequence, we can assume that there is a train track $\tau$, a sub-train track $\tau^{**}\subset\tau$, a one-sided curve $\gamma$ carried by $\tau$ but disjoint of $\tau^{**}$, and a sequence $(\phi_n)$ of homeomorphisms with 
$$\tau_n=\phi_n(\tau),\ \ \tau_n^{**}=\phi_n(\tau^{**})\text{ and }\gamma_n=\phi_n(\gamma).$$
We let $\hat\nu_n=\phi_n^{-1}(\nu_n)$ and note that $c_n\cdot\gamma$ is a one-sided atom of $\hat\nu_n$ for all $n$. This means, as \eqref{eq Scharlemann} in the proof of Scharlemann's theorem and the remark following it, that whenever $P\subset S$ is a two-holed projective plane with core curve $\gamma$ and dual curve $\eta$ we have
\begin{equation}\label{I am totally sick of this}
\iota(\hat\nu_n,\eta)=\max_{\alpha\subset\D P}\iota(\hat\nu_n,\alpha)+\frac 12\cdot c_n
\end{equation}
for all $n$. Note that $\tau^{**}$ can be seen as a two-sided multicurve because it is homeomorphic to $\tau_n^{**}$, the union of loopy edges of $\tau_n$. It follows that, since $\gamma$ is disjoint of $\tau^{**}$, we can also choose the two-holed projective plane to be contained in $S\setminus\tau^{**}$. It follows that $\D P$ and $\eta$ only meet edges in $\tau\setminus\tau^{**}$.

We claim now that the measured laminations $\hat\mu_n=\phi_n^{-1}(\mu_n)$ also satisfy \eqref{eq Scharlemann} for all sufficiently large $n$. Indeed, taking into account that the difference between the intersection numbers of $\hat\mu_n$ and $\hat\nu_n$ with $\D P$ is bounded by the product of the number of edges of $\hat\tau$ that $\D P$ meets times the maximal difference between the weights that $\hat\mu_n$ and $\hat\nu_n$ give to those edges, we get
\begin{align*}
\vert\iota(\hat\mu_n,\D P)-\iota(\hat\nu_n,\D P)\vert
&\le\text{const}\cdot\max_{e\in E(\tau)\setminus E(\tau^{**})}\vert\omega_{\hat\mu_n}(e)-\omega_{\hat\nu_n}(e)\vert\\
&\le\text{const}\cdot\max_{e\in E(\tau_n)\setminus E(\tau_n^{**})}\vert\omega_{\mu_n}(e)-\omega_{\nu_n}(e)\vert\\
&\le\text{const}\cdot\frac 1n\cdot \max_{e\in E(\tau_n)\setminus E(\tau_n^{**})}\frac1{\ell(e)}\\
&\le\text{const}\cdot\frac 1n\cdot \frac C{\ell(\tau_n)}.
\end{align*}
Here the $\max$ is taken over $E(\tau)\setminus E(\tau^{**})$ because we have chosen $P$ disjoint of $\tau^{**}$. A similar computation yields that, up to possibly replacing the constant by another one,
$$\vert\iota(\hat\mu_n,\eta)-\iota(\hat\nu_n,\eta)\vert\le\frac{\text{const}}{n\cdot\ell(\tau_n)}$$
Taking these two bounds with \eqref{I am totally sick of this}, and changing once again our constant, we get that
$$\iota(\hat\mu_n,\eta)-\max_{\alpha\subset\D P}\iota(\hat\mu_n,\alpha)\ge \frac 12\cdot c_n-\frac{\text{const}}{n\cdot\ell(\tau_n)}$$
for all $n$. Since we have the bound $c_n\ge\frac\epsilon{\ell(\tau_n)}$ we deduce that 
$$\iota(\hat\mu_n,\eta)>\max_{\alpha\subset\D P}\iota(\hat\mu_n,\alpha)$$
for all large $n$. This means that \eqref{eq Scharlemann} holds and thus that $\hat\mu_n$ has $\gamma$ as an atom. Since the pairs $(\hat\mu_n,\gamma)$ and $(\mu_n,\gamma_n)$ just differ by the homeomorphism $\phi_n$ we get that for all large $n$ the measured lamination $\mu_n$ has the one-sided curve $\gamma_n$ as an atom. This is a contradiction, and we are done.
\end{proof}

\section{Separating ergodic components}\label{sec:coloring}

Continuing with the same notation as in the previous section we assume that $S$ is a connected non-exceptional hyperbolic surface and that $\lambda\in\CL(S)$ is a connected geodesic lamination without closed leaves. We will also assume that it is recurrent. Since $\lambda$ is connected, this last assumption just means that $\lambda=\supp(\mu)$ for every non-trivial measured lamination $\mu\in\CM\CL(\lambda)$ supported by $\lambda$.

Our goal now is to prove Theorem \ref{prop lm}: 

\begin{sat}\label{prop lm}
  Let $S$ be a connected and possibly non-orientable hyperbolic surface,  let $\lambda\in\CL(S)$ be a connected recurrent lamination, and let $\tau_0$ be a train track carrying $\lambda$. Let $\mu_1,\dots,\mu_r\in\CM\CL(\lambda)$ be distinct unit length ergodic measured laminations with support $\lambda$, and let $\CA\subset S$ be a collection of simple curves separating measured laminations.

  There is $C$ such that for every $\epsilon>0$ there are
  \begin{itemize}
  \item a $(C,\lambda)$-uniform $\lambda$-generic refinement $\tau^\epsilon$ of $\tau_0$, and
  \item unit length measured laminations $\nu_1^\epsilon,\dots,\nu_r^\epsilon\in\CM\CL(\tau^\epsilon)$
  \end{itemize}
  satisfying the following:
  \begin{enumerate}
  \item $\ell_S(e)\cdot \left\vert\omega_{\mu_i}(e)-\omega_{\nu_i^\epsilon}(e)\right\vert\le\delta(\epsilon)$ for every $i=1,\dots,r$ and $e\in E(\tau^\epsilon)$.
  \item If $\tau^\epsilon_j\subset\tau^\epsilon$ is the sub-train track filled by $\nu^\epsilon_j$ then we have $\tau^\epsilon_i\cap\tau^\epsilon_j=\emptyset$ for all $i\neq j\in\{1,\dots,r\}$.
  \item We have $d_{\CA}(\mu_i^\epsilon,\mu_i)\le\delta(\epsilon)$ for all $i$, $\epsilon$, and unit length $\mu_i^\epsilon\in\CM\CL(\tau_i^\epsilon)$.
  \end{enumerate}
Here $\delta(\cdot)$ is a function with $\lim_{\epsilon\to 0}\delta(\epsilon)=0$ and $d_{\CA}$ is as in \eqref{distance CA}.
\end{sat}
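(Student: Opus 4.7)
The plan is to construct $\tau^\epsilon$ as a sufficiently fine uniform refinement of $\tau_0$ and to obtain the $\nu_i^\epsilon$ by projecting each $\mu_i$ edgewise via Lemma \ref{lem projection}. Fix the constant $C$ provided by Proposition \ref{prop-useful traintrack}; this is the $C$ claimed in the statement. For each $\epsilon>0$, choose a large length $L=L(\epsilon)$ and apply Proposition \ref{prop-useful traintrack} to obtain a $(C,\lambda)$-uniform, $\lambda$-generic refinement $\tau^\epsilon$ of $\tau_0$ with $\ell^\lambda(\tau^\epsilon)\ge L$.

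The core step is a combinatorial ergodic separation of the edges of $\tau^\epsilon$. For a threshold depending on $\epsilon$, partition a subset of $E(\tau^\epsilon)$ into pairwise disjoint sets $E_1,\dots,E_r$ by declaring $e\in E_i$ when $\omega_{\mu_i}(e)$ strictly dominates each $\omega_{\mu_j}(e)$ for $j\ne i$. The key technical claim is that, for $L$ sufficiently large, the combinatorial mass of $\mu_i$ off $E_i$, i.e. $\sum_{e\notin E_i}\ell_S(e)\,\omega_{\mu_i}(e)$, can be made as small as desired. This is a quantitative expression of the mutual singularity of the ergodic measures $\mu_i$ and is proved by contradiction: a failing sequence of $(C,\lambda)$-uniform refinements, combined with the finiteness of combinatorial train-track types (Lemma \ref{lem finite train tracks}) and compactness in $\CM\CL(S)$, yields a limit in which two distinct ergodic measures on $\lambda$ share positive mass on a common saturated set, violating mutual singularity.

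Once the separation is in place, apply Lemma \ref{lem projection} to each $\mu_i$ with its parameter tuned so that the resulting unit length $\nu_i^\epsilon\in\CM\CL(\tau^\epsilon)$ vanishes on every edge outside $E_i$; this yields condition (1). By construction, the sub-train track $\tau_i^\epsilon$ filled by $\nu_i^\epsilon$ consists only of edges in $E_i$, so the disjointness (2) is automatic. For (3), exploit the connectedness and recurrence of $\lambda$: every non-trivial measured lamination carried by $\lambda$ has support exactly $\lambda$, hence every ergodic measured lamination carried by $\tau_i^\epsilon$ is one of the $\mu_k$; the disjoint-edge construction together with the closeness condition (1) rules out $\mu_k$ for $k\ne i$, so the simplex $\CM\CL(\tau_i^\epsilon)\cap\{\ell_S=1\}$ collapses to a small neighborhood of $\mu_i$. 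Combining this with the uniformity of $\tau^\epsilon$ and the bounded number of edges of $\tau_i^\epsilon$, one controls $\iota(\mu_i^\epsilon,\alpha)-\iota(\mu_i,\alpha)$ for each $\alpha\in\CA$, giving $d_\CA(\mu_i^\epsilon,\mu_i)\le\delta(\epsilon)$.

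The principal difficulty is the second step: Birkhoff's ergodic theorem provides no rate, so turning mutual singularity into an edgewise bound uniform over the family of $(C,\lambda)$-uniform refinements of $\tau_0$ requires a compactness/contradiction argument that crucially uses both the uniformity of the train track (so that all edges have comparable $\lambda$-length and the combinatorial length $\sum_e\ell(e)\omega(e)$ is a faithful proxy for the geometric length) and the finiteness of combinatorial types. Once this quantitative separation is obtained, the remaining steps are essentially bookkeeping.
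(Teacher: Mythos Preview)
Your overall architecture matches the paper's: fix $C$ via Proposition \ref{prop-useful traintrack}, build a long $(C,\lambda)$-uniform $\lambda$-generic refinement $\tau^\epsilon$, and obtain the $\nu_i^\epsilon$ by applying Lemma \ref{lem projection} to each $\mu_i$. The differences lie in how you prove the edge separation and condition (3), and both of your arguments have genuine gaps.

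Your argument for (3) is wrong. You claim that ``every ergodic measured lamination carried by $\tau_i^\epsilon$ is one of the $\mu_k$'', deducing this from the fact that every non-trivial measure supported on $\lambda$ has full support. But $\CM\CL(\tau_i^\epsilon)$ is much larger than the set of measures supported on $\lambda$: it contains, for instance, the weighted simple closed curves used immediately afterwards in Corollary \ref{kor lm}, none of which live on $\lambda$. So the unit simplex in $\CM\CL(\tau_i^\epsilon)$ does not collapse near $\mu_i$ for the reason you give. The paper handles (3) differently, via Proposition \ref{prop different colors}: using Birkhoff and Egorov (Lemma \ref{lem be}) together with Lemma \ref{lem same intersection}, it shows directly that any unit-length lamination supported on the edges in $E(\tau,\mu_i,\epsilon)$ has intersection numbers with $\CA$ close to those of $\mu_i$, because the long leaf-segments of $\lambda$ projecting to those edges already approximate $\iota(\mu_i,\cdot)$.

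Your compactness argument for the separation is also not sound as sketched. After identifying the $\tau_n$ with a fixed combinatorial model $\tau$ via homeomorphisms $\phi_n$, the pulled-back measures $\phi_n^{-1}(\mu_i)$ are no longer supported on $\lambda$; their limits in $W(\tau)$ are just points of a cone with no a priori relation to the ergodic decomposition of $\lambda$, and there is no reason they should be mutually singular or even distinct. So the contradiction you aim for does not materialize. The paper instead proves the separation (Lemma \ref{lem almost done}) by applying Birkhoff directly to the geodesic flow on $\lambda$: for $\bar\mu_i$-typical $x$, long segments $I_{x,L}^\lambda$ have intersection counts with $\CA$ close to $\iota(\mu_i,\cdot)$, and since $\CA$ separates $\mu_i$ from $\mu_j$ the $\mu_i$-heavy and $\mu_j$-heavy edges must be disjoint once $m^\lambda(\tau)$ exceeds some $L_\epsilon$. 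No rate is needed, because $L_\epsilon$ may depend on $\mu_1,\dots,\mu_r$ and $\CA$, and Proposition \ref{prop-useful traintrack} then supplies a single train track meeting the bound.
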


Given that the statement of Theorem \ref{prop lm} is rather technical, the reader might be wondering what it actually means. It might help to re-visit the informal version that we stated in the introduction and compare with the one here, but we also hope that its content is clearer after we have proved the following result:

\begin{kor}\label{kor lm}
Let $\lambda\in\CL(S)$ be a lamination and $\mu_1,\dots,\mu_r\in\CM\CL(\lambda)$ be distinct unit length ergodic measured laminations with support $\lambda$. There is a sequence of weighted simple multicurves $(\gamma^i)$ with $\gamma^i=c^i_1\cdot\gamma^i_1+\dots+c^i_r\cdot\gamma^i_r$ such that $\mu_j=\lim_{i\to\infty}c^i_j\cdot\gamma^i_j$ for all $j=1,\dots,r$. 
\end{kor}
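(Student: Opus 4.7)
The plan is to apply Theorem~\ref{prop lm} directly and combine it with the density of weighted simple multicurves inside $\CM\CL(\tau)$ for any train track $\tau$. First I would dispatch the trivial case: if $\lambda$ is a single simple closed geodesic, then necessarily $r=1$ and the claim is immediate. Otherwise, ergodicity of each $\mu_j$ forces $\lambda=\supp(\mu_j)$ to be minimal, so $\lambda$ is connected and contains no closed leaves, and the standing assumptions of Section~\ref{sec:coloring} hold. I would then fix any train track $\tau_0$ carrying $\lambda$ and a finite collection $\CA$ of simple curves separating measured laminations (which exists by the cited result of Hamenstädt).

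Apply Theorem~\ref{prop lm} with $\epsilon=1/n$ for each $n\ge 1$, producing a $(C,\lambda)$-uniform, $\lambda$-generic refinement $\tau^n$ of $\tau_0$, unit-length laminations $\nu_j^n$, and pairwise disjoint sub-train tracks $\tau_j^n\subset\tau^n$ filled by $\nu_j^n$. Next I would invoke the standard density of weighted simple multicurves in $\CM\CL(\tau_j^n)$ (rational points are dense in the polyhedral cone $W(\tau_j^n)$) to pick, for each $j$ and $n$, a unit-length weighted simple multicurve $c_j^n\cdot\gamma_j^n$ carried by $\tau_j^n$. Since $\tau_1^n,\dots,\tau_r^n$ are pairwise disjoint, the multicurves $\gamma_1^n,\dots,\gamma_r^n$ are pairwise disjoint simple multicurves in $S$; hence their weighted union $\gamma^n:=c_1^n\gamma_1^n+\cdots+c_r^n\gamma_r^n$ is a bona fide weighted simple multicurve, as required.

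It remains to show $c_j^n\gamma_j^n\to\mu_j$ in $\CM\CL(S)$ as $n\to\infty$. Since $c_j^n\gamma_j^n$ has unit length and is carried by $\tau_j^n$, part~(3) of Theorem~\ref{prop lm} gives $d_\CA(c_j^n\gamma_j^n,\mu_j)\le\delta(1/n)\to 0$. To upgrade this $d_\CA$-convergence to convergence in the standard topology of $\CM\CL(S)$, I would argue by compactness: the unit-length locus in $\CM\CL(S)$ is compact (it is homeomorphic to $\CP\CM\CL(S)$ via normalization), and on this compact Hausdorff space the continuous pseudometric $d_\CA$ is genuinely a metric by the separating property of $\CA$; the identity from the unit sphere with its standard topology to the same set equipped with $d_\CA$ is a continuous bijection between compact Hausdorff spaces, hence a homeomorphism, so $d_\CA$-convergence entails standard convergence. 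The only non-routine ingredient in the whole proof is this last topological step converting $d_\CA$-convergence into $\CM\CL$-convergence; everything else is an essentially direct consequence of Theorem~\ref{prop lm} together with the density of weighted simple multicurves in the polyhedral cone of a train track.
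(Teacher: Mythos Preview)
Your argument is correct and follows essentially the same route as the paper's own proof: apply Theorem~\ref{prop lm} along a sequence $\epsilon\to 0$, pick in each $\tau_j^\epsilon$ a unit-length weighted (multi)curve, use disjointness of the $\tau_j^\epsilon$ to assemble a simple multicurve, and invoke part~(3) for the convergence. The only addition you make is the explicit compactness argument upgrading $d_\CA$-convergence to convergence in $\CM\CL(S)$; the paper treats this as already known, having recorded in Section~\ref{sec:preli} that $d_\CA$ is a genuine distance on $\CM\CL(S)$ inducing its topology, so in the paper's framework this step is immediate.
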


Corollary \ref{kor lm} is due to Lenzhen-Masur \cite[Theorem C]{Lenzhen-Masur} in the orientable case.

\begin{proof}
We will just prove the claim under the assumption that $\lambda$ is connected and recurrent, leaving to the reader to make the necessary changes to deal with the disconnected case. These assumptions imply that $\lambda=\supp(\mu_j)$ for all $j$. Let also $\CA$ be as in the statement of Theorem \ref{prop lm} and, for $\epsilon\to 0$ let $\tau^\epsilon$ and $\tau_1^\epsilon,\dots,\tau_r^\epsilon$ be the produced train tracks. Let then $c_j^\epsilon\cdot\gamma_j^\epsilon\in\CM\CL(\tau_j^\epsilon)$ be a unit length weighted curve carried by $\tau^\epsilon_j$---as we mention earlier the set of weighted curves is dense, and in particular non-emtpy, in the set of measured laminations carried by a train track. Anyways, we have
$$\mu_j=\lim_{\epsilon\to 0}c^\epsilon_j\cdot\gamma^\epsilon_j$$
because of (3) in the theorem. Moreover, $c^\epsilon_1\cdot\gamma^\epsilon_1+\dots+c^\epsilon_r\cdot\gamma^\epsilon_r$ is a simple multicurve because, by (2) in the theorem, we have $\tau^i_\epsilon\cap\tau^j_\epsilon=\emptyset$ for all $i\neq j$. We are done.
\end{proof}

\subsection{Proof of Theorem \ref{prop lm}}

We turn now to the proof of Theorem \ref{prop lm}. We will complete it first modulo the following result that we deal with below. It basicall says that, as long as the shortest edge in the train track has sufficiently long $\lambda$-length, then the measured laminations $\mu_1,\dots\mu_r$ spend most of their time in different parts of the train track. More precisely, if we write
\begin{equation}\label{eq set of heavy edges}
E(\tau,\mu,\epsilon)=\{e\in E(\tau)\text{ with }\ell(e)\cdot\omega_{\mu}(e)>\epsilon\}
\end{equation}
then the proposition says that for all $\epsilon$ the sets $E(\tau,\mu_i,\epsilon)$ and $E(\tau,\mu_j,\epsilon)$ are disjoint for suitably chosen $\tau$ and $\mu_i\neq\mu_j$:

\begin{prop}\label{prop different colors}
  Suppose that $\lambda,\mu_1,\dots\,\mu_r$ and $\CA$ are as in Theorem \ref{prop lm}. For every $\epsilon$ there is $L_\epsilon$ such that for every almost geodesic train track $\tau$ carrying $\lambda$ and with $m^\lambda(\tau)\ge L_\epsilon$ the following holds:
  \begin{itemize}
  \item We have $E(\tau,\mu_i,\epsilon)\cap E(\tau,\mu_j,\epsilon)=\emptyset$ for all $i\neq j$.
    \item Moreover, any unit length $\nu_i\in\CM\CL(\tau)$ with
  $$\{e\in E(\tau)\text{ with }\omega_{\nu_i}(e)\neq 0\}\subset E(\tau,\mu_i,\epsilon)$$
      satisfies $d_\CA(\nu_i,\mu_i)<\delta(\epsilon)$ with $\lim_{\epsilon\to 0}\delta(\epsilon)=0$.
  \end{itemize}
\end{prop}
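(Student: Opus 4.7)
My approach uses the mutual singularity of the flip-and-flow invariant measures $\hat\mu_1,\dots,\hat\mu_r$ on $T^1 S$ together with Birkhoff's ergodic theorem and the finiteness of combinatorial types of train tracks (Lemma~\ref{lem finite train tracks}). Since the $\mu_i$ are distinct unit length ergodic measured laminations all supported on the connected recurrent lamination $\lambda$, the $\hat\mu_i$ form a finite family of mutually singular ergodic probability measures for the geodesic flow on $T^1\lambda$. For each $\eta>0$ and each pair $i\ne j$, mutual singularity yields a continuous $f_{ij}:T^1 S\to[0,1]$ with $\int f_{ij}\,d\hat\mu_i<\eta$ and $\int f_{ij}\,d\hat\mu_j>1-\eta$; combining Birkhoff's theorem with dominated convergence gives $T_0=T_0(\eta)$ such that for $T\ge T_0$ the Birkhoff average $A_Tf_{ij}$ is within $\eta$ in $L^1(\hat\mu_k)$ of $\int f_{ij}\,d\hat\mu_k$.

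\textbf{First conclusion.} I would argue by contradiction. Suppose there exist $\epsilon>0$, $i\ne j$, a sequence of almost geodesic train tracks $\tau_n$ carrying $\lambda$ with $m^\lambda(\tau_n)\to\infty$, and edges $e_n\in E(\tau_n,\mu_i,\epsilon)\cap E(\tau_n,\mu_j,\epsilon)$. By Lemma~\ref{lem finite train tracks}, extracting a subsequence, assume all $\tau_n$ have a common combinatorial type $\tau$ and that $e_n$ corresponds to a fixed edge $e\in E(\tau)$. The flow-box decomposition
\[
\int f_{ij}\,d\hat\mu_k=\sum_{e'\in E(\tau_n)}\int_{\mathrm{box}(e')}f_{ij}\,d\hat\mu_k,
\]
together with the fact that $\mathrm{box}(e')$ has $\hat\mu_k$-mass $\omega_{\mu_k}(e')\ell(e')$ (up to a flip factor), identifies the contribution of $\mathrm{box}(e_n)$ as $\omega_{\mu_k}(e_n)\ell(e_n)$ times the weighted average of $f_{ij}$ along leaf-arcs in $\Phi^{-1}(e_n)$. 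Since $\ell(e_n)\to\infty$, Birkhoff forces this arc-average to approach $\int f_{ij}\,d\hat\mu_k$, hence $<\eta$ for $k=i$ and $>1-\eta$ for $k=j$. But both weights $\omega_{\mu_i}(e_n)\ell(e_n)$ and $\omega_{\mu_j}(e_n)\ell(e_n)$ exceed $\epsilon$, and a quantitative comparison of the two flow-box integrals---using that $\mathrm{box}(e_n)$ is a single geometric flow-box whose leaf-arc averages are governed simultaneously by both Birkhoff estimates---produces the desired contradiction once $\eta$ is chosen small relative to $\epsilon$.

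\textbf{Second conclusion.} Once the first conclusion is in hand, applying the same flow-box argument yields $\sum_{e\notin E(\tau,\mu_i,\epsilon)}\omega_{\mu_i}(e)\ell(e)\le\delta(\epsilon)$, so almost all of $\mu_i$'s mass lives on $E(\tau,\mu_i,\epsilon)$. For a unit length $\nu_i\in\CM\CL(\tau)$ whose weights are supported on $E(\tau,\mu_i,\epsilon)$, and any $\alpha\in\CA$,
\[
|\iota(\nu_i,\alpha)-\iota(\mu_i,\alpha)|\le\sum_{e\in E(\tau,\mu_i,\epsilon)}|\omega_{\nu_i}(e)-\omega_{\mu_i}(e)|\,\#(\alpha\cap e)+\sum_{e\notin E(\tau,\mu_i,\epsilon)}\omega_{\mu_i}(e)\,\#(\alpha\cap e).
\]
The second sum is $O(\delta(\epsilon))$ via $\#(\alpha\cap e)\le\mathrm{const}\cdot\ell(e)$; the first is handled by noting that the first conclusion forces both $\omega_{\nu_i}$ and the restriction of $\omega_{\mu_i}$ to $E(\tau,\mu_i,\epsilon)$ to be ``$\mu_i$-ergodic-like'' weight vectors in the cone $W(\tau)\cap\BR_{\ge 0}^{E(\tau,\mu_i,\epsilon)}$, forcing them to agree up to $O(\delta(\epsilon))$ after normalization.

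\textbf{Main obstacle.} The delicate point in the first conclusion is that, naively, two mutually singular ergodic measures can both charge the same long edge via disjoint families of leaves, so one needs a careful argument relating the $L^1$-Birkhoff convergence to the geometric structure of the flow-box above a long edge: the same flow-box cannot simultaneously have $\hat\mu_i$-arc-averages close to $0$ and $\hat\mu_j$-arc-averages close to $1$ while carrying substantial $\hat\mu_i$ and $\hat\mu_j$ mass. Making this rigorous---perhaps by replacing the continuous $f_{ij}$ by an approximately flow-invariant set $U_i\subset T^1S$ built from $B_T$ level sets, and tracking the boundary error of size $T_0/\ell(e)$ uniformly in the combinatorial type of $\tau$---is what I expect to be the technical heart of the proof, together with the ``unique-ergodicity-on-the-dominant-sub-train-track'' principle underpinning the second conclusion.
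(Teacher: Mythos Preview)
Your outline has the right ingredients (Birkhoff, flow-boxes, mutual singularity) but there is a genuine gap, and it is precisely the obstacle you flag at the end. For a generic continuous test function $f_{ij}$, the Birkhoff average $A_T f_{ij}(x)$ depends on which leaf of $\lambda$ the point $x$ lies on, not merely on its image $\Phi(x)$ in the train track. So two mutually singular measures can indeed both put weight $>\epsilon/\ell(e)$ on the same long edge $e$ via disjoint families of leaves, and your flow-box integral argument does not by itself give a contradiction: the $\hat\mu_i$-arcs in $\mathrm{box}(e)$ can have averages near $0$ while the $\hat\mu_j$-arcs in the very same box have averages near $1$. Your suggestion to build an ``approximately flow-invariant set'' does not help, because the underlying difficulty is that your test functions are not controlled by the carrying map.

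The paper resolves this by choosing as test functions precisely the intersection counts $x\mapsto \frac{1}{2L}\vert I^\lambda_{x,L}\cap\alpha\vert$ with the curves $\alpha\in\CA$. The crucial (and elementary) Lemma~\ref{lem same intersection} says that these counts depend only on $\Phi(x)$ up to an additive constant $c(\CA)$: any two leaf-segments of length $2L$ projecting to the same path in $\tau$ are homotopic through short tracks and hence meet each $\alpha$ essentially the same number of times. Combined with Birkhoff (Lemma~\ref{lem be}) this gives disjoint Borel sets $X_{\mu_i},X_{\mu_j}\subset\lambda$ of nearly full $\bar\mu_i$- and $\bar\mu_j$-measure whose $\Phi$-images are already disjoint in $\tau$. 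The first bullet then follows by a pigeonhole in each edge.

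Your argument for the second bullet has a more serious gap. You try to bound $\sum_e|\omega_{\nu_i}(e)-\omega_{\mu_i}(e)|\cdot\#(\alpha\cap e)$ by asserting that $\omega_{\nu_i}$ and $\omega_{\mu_i}$ must be close after normalization (``unique-ergodicity on the dominant sub-train-track''). This is simply false in general: the sub-train-track spanned by $E(\tau,\mu_i,\epsilon)$ can carry a large cone of weights, and nothing forces an arbitrary unit-length $\nu_i$ living there to have weights near those of $\mu_i$. The paper's argument does not go through the weights at all. Instead it computes $\iota(\nu_i,\alpha)$ as an integral over $\nu_i$-segments, observes (again via Lemma~\ref{lem same intersection}) that each $\nu_i$-segment meets $\alpha$ essentially as many times as any $\lambda$-segment with the same $\Phi$-image, and then uses that $\nu_i$ lives over $E(\tau,\mu_i,\epsilon)$ to find, for $\bar\nu_i$-most $x$, a $\mu_i$-Birkhoff-generic $y_x\in X_{\mu_i}$ with $\Phi(y_x)=\Phi(x)$. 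This yields $\iota(\nu_i,\alpha)\approx\iota(\mu_i,\alpha)$ directly, with no comparison of weight vectors needed.
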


Proposition \ref{prop different colors} is basically a property of mutually singular ergodic measures invariant under some dynamical system. The main difference between Theorem \ref{prop lm} and Proposition \ref{prop different colors} is that the latter does not assert anything about the topology of the sets of edges $E(\tau,\mu_i,\epsilon)$---for all we know, this set could consist of a single edge, meaning that there is no non-trivial measured lamination satisfying the condition in the second statement of the proposition. We work with uniform train tracks exactly to get some control of these sets. This is were Lemma \ref{lem projection} will come handy.

Assuming Proposition \ref{prop different colors} for the time being, we prove Theorem \ref{prop lm}:

\begin{proof}[Proof of Theorem \ref{prop lm}]
First note that if $\lambda$ has a closed leaf then we actually have that $\lambda$ is itself a circle and hence that the theorem is trivially true. We suppose from now on that $\lambda$ has no closed leaf. 

  Let $C$ be the constant provided by Proposition \ref{prop-useful traintrack} and let $L_\epsilon$ be the constant provided by Proposition \ref{prop different colors} for our $\epsilon$. By Proposition \ref{prop-useful traintrack} there is a $(C,\lambda)$-uniform $\lambda$-generic refinement $\tau=\tau^\epsilon$ of $\tau_0$ with $\ell^{\lambda}(\tau)\ge C\cdot L_\epsilon$. Uniformity implies that $m^\lambda(\tau)\ge L_\epsilon$, and hence that Proposition \ref{prop different colors} applies. 

We start however by applying Lemma \ref{lem projection} to our $\epsilon$ and to each $\mu_i$, letting $\nu_i=\nu_i^\epsilon\in\CM\CL(\tau)$ be the obtained measured lamination. We are going to prove that these measured laminations have the properties claimed in the statement of the theorem.

  To begin with, note that (1) in the theorem holds because $\nu_i$ satisfies (1) in Lemma \ref{lem projection}. 

  To see that (2) is satisfied, note that, by Lemma \ref{lem projection}, the smallest sub-train track $\tau_i=\tau_i^\epsilon\subset\tau$ carrying $\nu_i$ is such that $E(\tau_i)\subset E(\tau,\mu_i,\epsilon)$. Now Proposition \ref{prop different colors} implies that $E(\tau_i)\cap E(\tau_j)=\emptyset$ for all $i\neq j$. This implies that $\tau_i\cap\tau_j=\emptyset$ because they are both sub-train tracks of the $\lambda$-generic train track $\tau$. We have proved that (2) in the theorem is also satisfied.

  To conclude, note that the final statement in Proposition \ref{prop different colors} implies that also (3) in the theorem is satisfied. 
\end{proof}

The remaining of this section is devoted to prove Proposition \ref{prop different colors}.

%
%
%

\subsection{Approximating segments}

Given a point $x\in\lambda$ and some $L>0$ we denote by
$$I_{x,L}^{\lambda}:[-L,L]\to\lambda$$
the path parametrized by arc length and mapping $0$ to $x$. We will not distinguish between the path $t\mapsto I_{x,L}^\lambda(t)$ and its opposite $t\mapsto I_{x,L}^\lambda(-t)$. In fact, abusing notation we often identify $I_{x,L}^\lambda$ with its image, acting as if $I_{x,L}^\lambda=I_{x,L}^\lambda([-L,L])$. We care about these segments because, for large $L$, they do a good job approximating $\lambda$. It will however be important to only work with segments $I_{x,L}^\lambda$ which are basically determined by the point $\Phi(x)$, where $\Phi$ is the carrying map of $\lambda$ in $\tau$. Note however first that $\Phi(I^\lambda_{x,L})$ and $\Phi(I^\lambda_{x',L})$ could actually be very different even if $\Phi(x)=\Phi(x')$: they might be close for quite a while, but then, upon passing a vertex, they might diverge into different edges of $\tau$. This is why we consider, for each closed edge $e\in E(\tau)$, the set
$$\BI^\lambda_L(e)=\{x\in\lambda\text{ with }\Phi(I_{x,L}^\lambda)\subset e\}$$
of those points $x$ such that $I_{x,L}^\lambda$ projects into $e$. Denote by 
$$\BI^\lambda_L(\tau)=\cup_{e\in E(\tau)}\BI_L^\lambda(e)$$
the union of all those sets. We stress that to define $\BI_L^\lambda(e)$ and thus $\BI_L^\lambda(\tau)$ we considered closed edges. This only matters, but then it matters a lot, if $e$ is a loopy edge. Anyways, the following lemma asserts that for points in $\BI_L^\lambda(\tau)$ the segment $I_{x,L}^\lambda$ is to all effects determined by $x$.

\begin{lem}\label{lem same intersection}
  For any finite set of closed geodesics $\CA$ there is a constant $c(\CA)$ such that if $\tau$ is an almost geodesic train track and if $\lambda$ is a lamination carried by $\tau$, then we have
  $$\left\vert\vert\alpha\cap I^\lambda_{x,L}\vert-\vert\alpha\cap I^\lambda_{x',L}\vert\right\vert\le c(\CA)$$
  for any $\alpha\in\CA$ and any two $x,x'\in\BI_L^\lambda(\tau)$ with $\Phi(x)=\Phi(x')$.
\end{lem}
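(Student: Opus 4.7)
The plan is to lift to the universal cover $\BH^2$, show that $\tilde I^\lambda_{x,L}$ and $\tilde I^\lambda_{x',L}$ lie at uniformly bounded Hausdorff distance from each other, and then conclude by a counting argument that the symmetric difference of their intersection sets with lifts of $\alpha$ is uniformly bounded.

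First I would pass to the universal cover and choose lifts $\tilde x$, $\tilde x'$ of $x$, $x'$, together with a lift $\tilde\Phi$ of the carrying map, so that $\tilde\Phi(\tilde x)=\tilde\Phi(\tilde x')=\tilde p$ for a common lift $\tilde p$ of $\Phi(x)=\Phi(x')$. The hypothesis that both $\Phi$-images lie in the single closed edge $e$ translates to the claim that both $\tilde\Phi$-images are contained in a common smoothly immersed almost geodesic arc $\tilde\sigma\subset\tilde\tau$ through $\tilde p$: for non-loopy $e$ this is a single lift of $e$; for loopy $e$ it is a concatenation of lifts of $e$ joined at lifts of the unique vertex $v$ of $e$, with both leaves tracing the same $\tilde\sigma$ because each is forced at every lift of $v$ to take the unique $\lambda$-legal turn connecting the two half-edges of $e$. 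This bookkeeping in the loopy case is the main technical point I expect to require care.

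By property (*), $\tilde\sigma$ lies within distance $1$ of an honest geodesic $\tilde e_*\subset\BH^2$; since each of $\tilde I^\lambda_{x,L}$, $\tilde I^\lambda_{x',L}$ is within distance $1$ of its $\tilde\Phi$-image, both geodesic segments stay in the $2$-neighborhood of $\tilde e_*$ throughout and both pass within distance $1$ of $\tilde p$. A standard hyperbolic-geometry argument then yields a constant $D$, depending only on $\epsilon_0$, such that any two geodesic segments of $\BH^2$ with these properties are at Hausdorff distance at most $D$ in $\BH^2$: the exponential divergence of geodesics in $\BH^2$ forces any geodesic remaining long inside the $2$-tube of $\tilde e_*$ to be nearly parallel to $\tilde e_*$, and two such nearly parallel geodesic segments passing within distance $1$ of a common point must stay close throughout.

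Finally, each intersection of $\alpha$ with $I^\lambda_{x,L}$ corresponds to a unique lift of $\alpha$ crossing $\tilde I^\lambda_{x,L}$, since two geodesics of $\BH^2$ meet at most once. A lift of $\alpha$ crossing $\tilde I^\lambda_{x,L}$ but not $\tilde I^\lambda_{x',L}$ must pass within $D$ of $\tilde I^\lambda_{x',L}$ without meeting it, and using the near-parallelism of the two segments one sees that it must then pass within distance $D$ of one of the two endpoints of $\tilde I^\lambda_{x',L}$; the symmetric statement holds with the roles of the two segments exchanged. Since the compact lamination $\lambda$ and each $\alpha\in\CA$ live in the thick part of $S$ where the injectivity radius is bounded below by some $\iota_0>0$, a standard orbit-counting argument bounds by a constant $N_\alpha=N(\alpha,D,\iota_0)$ the number of lifts of $\alpha$ in $\BH^2$ within distance $D$ of any fixed point in this region. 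Summing over the four endpoints yields
\[
\bigl|\,|\alpha\cap I^\lambda_{x,L}|-|\alpha\cap I^\lambda_{x',L}|\,\bigr|\le 4N_\alpha,
\]
and setting $c(\CA)=4\max_{\alpha\in\CA}N_\alpha$ proves the lemma.
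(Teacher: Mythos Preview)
Your argument is correct and rests on the same idea as the paper's: the two segments $I_{x,L}^\lambda$ and $I_{x',L}^\lambda$ stay uniformly close, so any discrepancy in their intersections with $\alpha$ is concentrated near the four endpoints. The paper packages this more briefly by asserting directly that (after possibly reversing one segment) there is a homotopy from $I_{x,L}^\lambda$ to $I_{x',L}^\lambda$ all of whose tracks have length at most $d_0=6$, and then bounding the difference by twice the maximal number of times $\alpha$ can meet a geodesic arc of length $d_0$; your route through the universal cover, the Hausdorff-distance estimate, and an orbit-counting bound is an explicit unpacking of that same homotopy picture. One small point worth sharpening: your Hausdorff bound really uses that the \emph{midpoints} $\tilde x,\tilde x'$ of the two lifted segments lie within distance $1$ of $\tilde p$ (not merely that the segments pass near $\tilde p$), which is indeed the case since $I_{x,L}^\lambda(0)=x$ and likewise for $x'$, and is what rules out the two equal-length segments being slid far apart along $\tilde e_*$.
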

\begin{proof}
There is $d_0$ such that whenever we have $\Phi(x)=\Phi(x')$ for $x,x'\in\BI^\lambda_L(\tau)$, then (up to reversing one of the segments if necessary) there is a homotopy from $I_{x,L}^\lambda$ to $I_{x',L}^\lambda$ whose tracks have length at most $d_0$---for example $d_0=6$ does the trick. Now, for any given closed geodesic $\alpha$ there is a bound $c(\alpha)$ for the maximal number of times that it can meet a geodesic segment of length $d_0$ in general position. This quantity, or rather its double, bounds then the difference between the number of times that $\alpha$ can meet $I_{x,L}^\lambda$ and $I_{x',L}^\lambda$ if $x,x'\in\BI^\lambda_L(\tau)$ are such that $\Phi(x)=\Phi(x')$. The claim follows when we take $c(\CA)=2\cdot\max_{\alpha\in\CA}c(\alpha)$.
\end{proof}

Recall now that any measured lamination, and thus any unit length $\mu\in\CM\CL(\lambda)$ can be interpreted as a flip and flow invariant probability measure $\hat\mu$ on $T^1S$. Let then $\bar\mu$ be the push-forward of $\hat\mu$ under the projection map $T^1S\to S$. We refer to $\bar\mu$ as {\em the probability measure on $S$ associated to the unit length lamination} $\mu$. 

We will want to make sure that we arrange things in such a way that $\BI_L^\lambda(\tau)$ has large $\bar\mu$-measure. Indeed, if one fixes the train track $\tau$ first and chooses $L$ second it might well be that $\BI^\lambda_L(\tau)$ is empty---it makes more sense to do things the other way around:

\begin{lem}\label{lem most of it is good}
For all $L_1$ and $\epsilon$ there is $L_2$ such that the following holds: If $\mu\in\CM\CL(\lambda)$ is a unit length measured lamination supported by $\lambda$ and if $\tau$ is an almost geodesic train track with $m^\lambda(\tau)>L_2$ then
$$\bar\mu(\BI_{L_1}^\lambda(\tau))\ge 1-\epsilon$$
where $\bar\mu$ is the probability measure on $S$ associated to $\mu$.
\end{lem}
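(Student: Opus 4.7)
The plan is to show that the complement of $\BI_{L_1}^\lambda(\tau)$ in $\lambda$ has small $\bar\mu$-mass. First I would observe that $x\in\BI_{L_1}^\lambda(\tau)^c$ exactly when the projected arc $\Phi(I_{x,L_1}^\lambda)$ fails to be contained in any single closed edge of $\tau$, equivalently when $I_{x,L_1}^\lambda$ contains a \emph{transition point}: a point $y\in\lambda$ with $\Phi(y)$ a vertex of $\tau$ where the projected leaf switches from one edge to a different one at $y$. Consequently, $\BI_{L_1}^\lambda(\tau)^c$ sits inside the $\lambda$-arclength $L_1$-neighborhood of the set of transition points.

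Next I would estimate the $\bar\mu$-measure of this neighborhood using the local product structure of the flip-and-flow invariant measure $\hat\mu$ on $T^1S$ (which projects to $\bar\mu$). Transition points are naturally indexed by pairs $(v,t)$, where $v$ is a vertex of $\tau$ and $t=\{h,h'\}$ is a turn at $v$ whose two half-edges belong to distinct edges; the set of transition points of type $(v,t)$ is a transverse cross-section of $\lambda$ of transverse measure $\omega_\mu(t)$. Flowing each such cross-section by $\pm L_1$ along leaves produces a flow box of $\bar\mu$-mass $2L_1\,\omega_\mu(t)$, and since these boxes cover $\BI_{L_1}^\lambda(\tau)^c$ one obtains
$$\bar\mu\bigl(\BI_{L_1}^\lambda(\tau)^c\bigr)\ \le\ 2L_1\sum_{t\ \mathrm{transition}}\omega_\mu(t).$$

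The heart of the proof is then the bound $\sum_{t\ \mathrm{transition}}\omega_\mu(t)\le C/L_2$ for a constant $C$ depending only on the topology of $S$. For a non-loopy edge $e$ one has $\ell(e)=\ell^\lambda(e)\ge L_2$, so the inequality $\sum_e\omega_\mu(e)\ell(e)\le 2\ell(\mu)=2$ from \eqref{eq combinatorial length} gives $\omega_\mu(e)\le 2/L_2$; summing over the topologically bounded number of non-loopy edges handles this part. For loopy edges the wrap turn $\{h_e,h_e'\}$ keeps the leaf inside $e$ and is \emph{not} a transition, so only the entry and exit turns at the vertex of $e$ contribute; one checks that the total entry/exit weight per loopy edge is also $O(1/L_2)$ by comparing the per-visit transverse measure with $\omega_\mu(e)$ and using that $\ell^\lambda(e)\ge L_2$ forces visits to $e$ to carry, on $\mu$-average, arclength at least proportional to $L_2$. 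Combining everything gives $\bar\mu(\BI_{L_1}^\lambda(\tau)^c)\le 2CL_1/L_2$, so taking $L_2\ge 2CL_1/\epsilon$ finishes the proof. The main subtlety is the loopy-edge case in the last step: distinct visits of leaves to a loopy edge can wrap it very different numbers of times, so the per-visit weights must be tracked carefully rather than being replaced by the cruder $\omega_\mu(e)$.
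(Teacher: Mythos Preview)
Your proof is correct and rests on the same underlying fact as the paper's: each connected component of $\Phi^{-1}(e)$ has $\lambda$-arclength at least of order $L_2$. The paper organizes the argument a little differently. Rather than counting transition turns globally, it works edge by edge and shows directly that
\[
\bar\mu\bigl(\Phi^{-1}(e)\setminus\BI_{L_1}^\lambda(e)\bigr)\ \le\ \epsilon\cdot\bar\mu\bigl(\Phi^{-1}(e)\bigr)
\]
for each $e$: every segment $s\in\pi_0(\Phi^{-1}(e))$ has length at least $\tfrac14 L_2$ (for loopy $e$ one uses $\ell^\lambda(e)\ge 2\ell(e)$ to see that $\ell(\Phi(s))\ge\tfrac12\ell^\lambda(e)$), while at most a portion of length $4L_1+4$ near its two endpoints can fail to lie in $\BI_{L_1}^\lambda(e)$. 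Summing over edges gives the lemma with $L_2>(16L_1+16)\epsilon^{-1}$. This packaging avoids your separate treatment of loopy edges: the per-segment length bound handles loopy and non-loopy edges uniformly, whereas your bound on the transverse measure of transitions requires the extra step for loopy edges that you sketch (and which is indeed correct, since each visit to a loopy edge has arclength $\gtrsim L_2$, forcing the entry/exit transverse measure to be $O(1/L_2)$). The two arguments are really repackagings of one another---your ``transition points'' are exactly the endpoints of the segments $s$ in the paper's proof.
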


\begin{proof}
With some $L_2$ to be determined below, suppose that $\tau$ is almost geodesic with $m^\lambda(\tau)>L_2$ and let $\Phi:\lambda\to\tau$ be the corresponding carrying map. Note that to get the desired bound it suffices to prove that
\begin{equation}\label{Rammstein playing}
\bar\mu(\Phi^{-1}(e)\setminus\BI_{L_1}^\lambda(e))<\epsilon\cdot\bar\mu(\Phi^{-1}(e))
\end{equation}
for all $e\in E(\tau)$.

The set $\Phi^{-1}(e)$ is the disjoint union of segments. If $e$ is not loopy then $\Phi$ maps each such segment $s$ homeomorphically onto $e$ and we thus have $\ell(\Phi(s))=\ell^{\lambda}(e)$ and hence that $\ell(s)\ge \frac 12\cdot\ell(e)>\frac 12\cdot L_2$ because $\Phi$ distorts distances by at most a factor of $2$. If $e$ is loopy then we can either have $\ell(\Phi(s))=\ell^\lambda(e)$ or $\ell(\Phi(s))=\ell^{\lambda}(e)-\ell(e)$ because some of the arcs may turn around $e$ once less than others. In any case, we get from the definition of $\lambda$-loopy that $\ell^{\lambda}(e)\ge 2\cdot\ell(e)$ for every $\lambda$-loopy edge $e$. This means that we still have
$$\ell(\Phi(s))\ge \frac 14\cdot L_2$$
for any component $s\in\pi_0(\Phi^{-1}(e))$, independently of $e$ being loopy or not. 

On the other hand we have for each such segment $s\in\pi_0(\Phi^{-1}(e))$ that 
$$\ell(s\setminus\BI_L^\lambda(e))\le 4\cdot L_1+4$$
because $s$ has two end-points, because $\Phi$ displaces points at most distance 1 and distorts distances by at most a factor $2$. 

This means that if we take $L_2>(16\cdot L_1+16)\cdot\epsilon^{-1}$ we have that
$$\ell(s\setminus\BI_L^\lambda(e))\le\epsilon\cdot\ell(s).$$
Since this holds for every $s\in\pi_0(\Phi^{-1}(e))$ we get  \eqref{Rammstein playing}, as we wanted.
\end{proof}

To conclude with these generalities note that if $e\in E(\tau)$ is an edge in an almost geodesic train track carrying $\lambda$ then we have $\bar\mu(\Phi^{-1}(e))\ge \frac 12\cdot\ell(e)\cdot\omega_\mu(e)$ because each segment in $\Phi^{-1}(e)$ has at least length $\frac 12\cdot\ell(e)$ and there are $\omega_\mu(e)$ worth of segments. If $e$ is loopy this is still true when we just cut $e$ open at a point (equivalently, when we work with open edges). We record a minimal generalisation of this fact:

\begin{lem}\label{lem mass of and edge}
If $\tau$ is an almost geodesic train track carrying $\lambda$ and $\mu\in\CM\CL(\lambda)$ is a unit length measured lamination supported by $\lambda$ then we have
$$\bar\mu(\Phi^{-1}(U))\ge \frac 12\cdot\ell(U)\cdot\omega_\mu(e)$$
for every edge $e\in E(\tau)$ and every measurable subset $U\subset e$. Here $\bar\mu$ is the probability measure on $S$ associated to $\mu$ and $\Phi:\lambda\to\tau$ is the carrying map.\qed
\end{lem}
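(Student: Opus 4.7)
The plan is to generalize verbatim the computation sketched in the remark immediately preceding the lemma. The key tool is the local product decomposition of $\bar\mu$: in a foliated neighborhood of a point of $\lambda$, $\bar\mu$ equals the product of the transverse measure $\mu^T$ on a transversal $T$ with arclength along leaves. I will first select a short transversal $T$ to $\tau$ crossing it transversally at exactly one interior point of $U$; shortness ensures that $T$ is also transversal to $\lambda$, and the defining property of the edge weight gives $\mu^T(T\cap\lambda)=\omega_\mu(e)$.

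The product decomposition then yields
\begin{equation*}
\bar\mu(\Phi^{-1}(U))\ge\int_{T\cap\lambda}\ell\bigl(s_y\cap\Phi^{-1}(U)\bigr)\,d\mu^T(y),
\end{equation*}
where $s_y\subset\lambda$ denotes the connected component of $\Phi^{-1}(e)$ on the leaf through $y$ containing $y$. To bound each integrand from below I will argue as follows. The restriction $\Phi|_{s_y}:s_y\to e$ is a smooth immersion of an interval into an interval, hence monotone, and by maximality its endpoints must map to vertices of $\tau$. For a non-loopy $e$ this forces $\Phi(s_y)=e\supset U$, so the distortion bound $|d\Phi/dt|\le 2$ together with the change-of-variables formula gives $\ell\bigl(s_y\cap\Phi^{-1}(U)\bigr)\ge\tfrac12\ell(U)$, and substituting produces the desired inequality.

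For the loopy case, I would follow the recipe outlined in the remark preceding the lemma and cut $e$ open at a single point: a leaf component that winds around $e$ some $k\ge 1$ times contributes a factor $k$ to both sides of the intended inequality---once to the length $\ell\bigl(s_y\cap\Phi^{-1}(U)\bigr)$ and once to the transverse count $\mu^T(T\cap\lambda)$---so the lower bound $\tfrac12\ell(U)\omega_\mu(e)$ persists. I do not anticipate a genuine obstacle; the only slightly delicate point is the careful bookkeeping of the product decomposition of $\bar\mu$ together with the transverse measure, but this is standard and is already implicit in the estimate quoted just above the lemma.
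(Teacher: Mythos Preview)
Your argument is correct and is essentially the same as the paper's: the paper states the lemma with a \qed immediately after the explanatory sentence preceding it, which says exactly that each segment in $\Phi^{-1}(e)$ has length at least $\tfrac12\ell(e)$ (because $\Phi$ distorts length by at most a factor $2$) and that there are ``$\omega_\mu(e)$ worth of segments,'' with the loopy case handled by cutting $e$ open at a point. You have simply made the product decomposition of $\bar\mu$ and the integration over the transversal explicit, which is precisely the content behind that sentence.
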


All of this concerns arbitrary measured laminations with support $\lambda$---things become much more interesting when we restrict ourselves to ergodic measured laminations.

\subsection{The Birkhoff game}

The segments $I_{x,L}^\lambda$ considered above will be important because they do not only approximate $\lambda$, but also, if $x$ is well-chosen, they approximate concrete measured laminations supported by $\lambda$. Indeed, applying the Birkhoff ergodic theorem, we get that for every $L^1$-function $f$ the limit
$$\lim_{L\to\infty}\frac 1{2L}\int_{-L}^L(f\circ I_{x,L})dt$$
of time averages exists for $\bar\mu$-almost every $x\in\supp(\mu)$. Moreover, if $\mu$ is ergodic then we get that this limit of time averages actually agrees with the space average $\int fd\bar\mu$. The convergence in the ergodic theorem is only pointwise, but we get from Egorov's theorem that it is uniform as long as we are willing to forgo a set of $x$'s with small but positive measure. In this way we get:

\begin{lem}\label{lem be}
  Let $\mu\in\CM\CL(\lambda)$ be a unit length ergodic measured lamination supported by $\lambda$ and let $\CA$ be a finite collection of closed geodesics. For every $\epsilon,\delta$ there are a compact set $X_{\mu,\CA,\delta,\epsilon}\subset\lambda$ with $\bar\mu(X_{\mu,\CA,\delta,\epsilon})\ge 1-\epsilon$ and some $L_0\ge 0$ with 
    $$\max_{\alpha\in\CA}\left\vert\iota(\mu,\alpha)-\frac 1{2L}\vert I_{x,L}^\lambda\cap\alpha\vert\right\vert\le\delta$$
for all $x\in X_{\mu,\CA,\delta,\epsilon}$ and $L\ge L_0$.\qed
\end{lem}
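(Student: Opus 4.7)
The plan is to recognize the counting function $x\mapsto \vert I^\lambda_{x,L}\cap\alpha\vert$ as, up to a bounded error, a Birkhoff integral for the geodesic flow $(\phi_t)$ on $T^1S$ with respect to the flow- and flip-invariant probability measure $\hat\mu$, and then invoke Birkhoff's ergodic theorem (valid since $\mu$ is ergodic) together with Egorov's theorem and a finite intersection over $\alpha\in\CA$.

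First fix $\alpha\in\CA$. Since $\lambda$ has no closed leaves, $\alpha$ is not a leaf of $\lambda$ and meets every leaf transversally with angle bounded below by some $\theta_\alpha>0$. Choose $r_\alpha>0$ so that the $r_\alpha$-tubular neighborhood $N_\alpha$ of $\alpha$ is embedded and every visit of a leaf of $\lambda$ to $N_\alpha$ consists of a single transverse crossing of $\alpha$. Fix a nonnegative continuous bump $\psi\colon\BR\to[0,\infty)$ supported in $[-1,1]$ with $\int\psi=1$, and define a continuous function $f_\alpha\colon T^1S\to[0,\infty)$, extended by zero outside $T^1N_\alpha$, by
\[ f_\alpha(v) \;=\; \frac{1}{r_\alpha}\,\psi\!\left(\tfrac{d(\pi(v),\alpha)}{r_\alpha}\right)\cdot\vert\sin\measuredangle(v,T\alpha)\vert. \]
A direct computation in Fermi coordinates around $\alpha$ shows that $\int f_\alpha(\phi_t v)\,dt = 1$ along any geodesic that crosses $\alpha$ transversally once inside $N_\alpha$; summing over the crossings of $I^\lambda_{x,L}$ with $\alpha$ then yields
\[ \left\vert\int_{-L}^{L} f_\alpha(\phi_t v)\,dt \;-\; \vert I^\lambda_{x,L}\cap\alpha\vert\right\vert \;\le\; 2, \]
the error coming from at most two partial crossings at the endpoints of $I^\lambda_{x,L}$. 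Disintegrating $\hat\mu$ against the transversal $\alpha$ as "leafwise Lebesgue times the transverse measure $\mu^\alpha$" then gives the Bonahon-type identity
\[ \int_{T^1S} f_\alpha\,d\hat\mu \;=\; \mu^\alpha(\alpha) \;=\; \iota(\mu,\alpha). \]

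Ergodicity of $\mu$ is equivalent to ergodicity of $(\phi_t,\hat\mu)$, so Birkhoff's ergodic theorem gives $\tfrac{1}{2L}\int_{-L}^{L} f_\alpha(\phi_t v)\,dt \to \iota(\mu,\alpha)$ for $\hat\mu$-a.e.\ $v$. Combined with the bounded-error estimate above and the fact that $\bar\mu=\pi_*\hat\mu$, this yields $\tfrac{1}{2L}\vert I^\lambda_{x,L}\cap\alpha\vert \to \iota(\mu,\alpha)$ for $\bar\mu$-almost every $x\in\lambda$. Egorov's theorem on the probability space $(\lambda,\bar\mu)$ then produces, for each $\alpha\in\CA$, a compact set $Y_\alpha\subset\lambda$ with $\bar\mu(Y_\alpha)\ge 1-\epsilon/\vert\CA\vert$ and a threshold $L_{0,\alpha}$ on which the convergence is uniform within $\delta$. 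Setting $X_{\mu,\CA,\delta,\epsilon}=\bigcap_{\alpha\in\CA}Y_\alpha$ and $L_0=\max_{\alpha\in\CA} L_{0,\alpha}$ gives the lemma.

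The one delicate point is arranging both the bounded-error Birkhoff identity and the exact equality $\int f_\alpha\,d\hat\mu = \iota(\mu,\alpha)$; both reduce to choosing $r_\alpha$ small enough that each excursion of a leaf of $\lambda$ into $N_\alpha$ is a single transverse crossing, after which the first follows from a one-variable change of variables and the second from the definition of transverse measure. Everything else is a textbook application of Birkhoff plus Egorov, and the flip invariance of $\hat\mu$ is immaterial since the time averages $\tfrac{1}{2L}\int_{-L}^L$ do not depend on the direction of the tangent vector.
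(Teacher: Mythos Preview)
Your proposal is correct and follows exactly the route the paper indicates: the paragraph preceding the lemma in the paper simply says that one applies the Birkhoff ergodic theorem to get almost-everywhere convergence of the time averages to $\iota(\mu,\alpha)$, and then Egorov's theorem to upgrade this to uniform convergence off a set of small $\bar\mu$-measure---the lemma is then stated with a bare \qed. You have filled in the one detail the paper leaves implicit, namely the construction of a continuous $f_\alpha$ whose Birkhoff sums track the crossing count and whose $\hat\mu$-integral is $\iota(\mu,\alpha)$, and then carried out the intersection over the finitely many $\alpha\in\CA$ exactly as one expects.

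One minor remark: as written, the quantity $\measuredangle(v,T\alpha)$ is only literally defined for $v$ based on $\alpha$, and in hyperbolic geometry the integral of your $f_\alpha$ over a single crossing is not exactly $1$ but only $1+O(r_\alpha)$. This is harmless---either interpret the angle factor as $\vert\tfrac{d}{dt}d(\pi(\phi_t v),\alpha)\vert$ (which makes the crossing integral exactly $1$ by change of variables) or simply absorb the $O(r_\alpha)$ error into $\delta$ by shrinking $r_\alpha$. Neither point affects the argument, which is the same Birkhoff-plus-Egorov scheme the paper invokes.
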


With the same notation as above choose some $\epsilon>0$, let $\CA$ be a finite collection of closed geodesics and let $\mu,\mu'\in\CM\CL(\lambda)$ be unit length ergodic measured laminations with support $\lambda$. Note that,  as long as the intersection numbers $\iota(\mu,\alpha)$ and $\iota(\mu',\alpha)$ are distinct enough for some $\alpha\in\CA$, we have that the sets $X_{\mu,\CA,\delta,\epsilon}$ and $X_{\mu',\CA,\delta,\epsilon}$ provided by Lemma \ref{lem be} are disjoint. Indeed, we get from Lemma \ref{lem same intersection} and Lemma \ref{lem be} that for every $\delta>0$ with
  $$\delta<\frac 12\cdot d_\CA(\mu,\mu')$$,
there is $L_1>0$ such that for every almost geodesic train track $\tau$ carrying $\lambda$ we have 
\begin{equation}\label{lem now it is motorhead}
 \Phi\big(X_{\mu,\CA,\delta,\epsilon}\cap \BI^\lambda_{L_1}(\tau)\big)\cap \Phi\big(X_{\mu,\CA,\delta,\epsilon}\cap\BI^\lambda_{L_1}(\tau)\big)=\emptyset.
\end{equation}
Recall that $d_\CA(\cdot,\cdot)$ is the distance induced by $\CA$---see \eqref{distance CA}.

Let now $L_2$ be the constant provided by Lemma \ref{lem most of it is good} for our $L_1$ and $\epsilon$, and assume that $m^\lambda(\tau)>L_2$. Then we get from Lemma \ref{lem be} and Lemma \ref{lem most of it is good}
\begin{align*}
\bar\mu(X_{\mu,\CA,\delta,\epsilon}\cap\BI_{L_1}^\lambda(e))&\ge\bar\mu(\Phi^{-1}(e))-2\cdot\epsilon\\
\bar\mu'(X_{\mu',\CA,\delta,\epsilon}\cap\BI_{L_1}^\lambda(e))&\ge\bar\mu'(\Phi^{-1}(e))-2\cdot\epsilon
\end{align*}
for every edge $e\in E(\tau)$. Combining these two bounds with Lemma \ref{lem mass of and edge} we get that 
\begin{align*}
\ell(e\setminus\Phi(X_{\mu,\CA,\delta,\epsilon}\cap\BI_{L_1}^\lambda(e)))&\ge \frac{4\cdot\epsilon}{\omega_\mu(e)}\\
\ell(e\setminus\Phi(X_{\mu',\CA,\delta,\epsilon}\cap\BI_{L_1}^\lambda(e)))&\ge \frac{4\cdot\epsilon}{\omega_{\mu'}(e)}
\end{align*}
From \eqref{lem now it is motorhead} we thus get that at most for one of $\mu$ and $\mu'$ we can have that the right side is larger than $\frac 12\cdot\ell(e)$, meaning that we have the implication
$$\ell(e)\cdot\omega_\mu(e)>8\cdot\epsilon\ \Rightarrow\ \ell(e)\cdot\omega_{\mu'}(e)<8\cdot\epsilon$$
and the same if we revers the roles of $\mu$ and $\mu'$. With $E(\cdot,\cdot,\cdot)$ as in \eqref{eq set of heavy edges}, this reads as
$$E(\tau,\mu,8\cdot\epsilon)\cap E(\tau,\mu',8\cdot\epsilon)=\emptyset.$$
We summarize what we got so far:

\begin{lem}\label{lem almost done}
  Let $\lambda\in\CL(S)$ be a lamination and $\CA$ a finite set of closed geodesics separating measured laminations. Let also $\mu_1$ and $\mu_2\in\CM\CL(\lambda)$ be distinct, unit length, ergodic measured laminations with support $\lambda$. Then for every $\epsilon$ there is $L_2$ such that whenever $\tau$ is an almost geodesic train track carrying $\lambda$ and with $m^{\lambda}(\tau)\ge L_2$ then $E(\tau,\mu_1,\epsilon)\cap E(\tau,\mu_2,\epsilon)=\emptyset$. \qed
\end{lem}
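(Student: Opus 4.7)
The plan is to combine the Birkhoff ergodic theorem with the approximation machinery developed in Lemmas \ref{lem be}, \ref{lem same intersection}, \ref{lem most of it is good}, and \ref{lem mass of and edge}. Since $\CA$ separates measured laminations and $\mu_1\neq\mu_2$, the quantity $d_\CA(\mu_1,\mu_2)$ is strictly positive. The plan is to fix $\delta<\tfrac{1}{2}d_\CA(\mu_1,\mu_2)$ and apply Lemma \ref{lem be} to each $\mu_i$, obtaining compact sets $X_i:=X_{\mu_i,\CA,\delta,\epsilon}$ with $\bar\mu_i(X_i)\geq 1-\epsilon$ and a common cutoff $L_1$ past which the $\alpha$-intersection counts of the segments $I_{x,L_1}^\lambda$ approximate $\iota(\mu_i,\alpha)$ to within $\delta$ for every $\alpha\in\CA$ and every $x\in X_i$. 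By also taking $L_1$ large enough that $2L_1(d_\CA(\mu_1,\mu_2)-2\delta)$ exceeds the constant $c(\CA)$ from Lemma \ref{lem same intersection}, the witnessing $\alpha$ will force any two points $x\in X_1$, $x'\in X_2$ with $\Phi(x)=\Phi(x')$ to violate Lemma \ref{lem same intersection}. Hence the projections $\Phi(X_1\cap\BI_{L_1}^\lambda(\tau))$ and $\Phi(X_2\cap\BI_{L_1}^\lambda(\tau))$ are disjoint subsets of $\tau$, for any almost geodesic $\tau$ carrying $\lambda$.

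Next I would translate this disjointness of $\Phi$-images into the required disjointness of edge sets. Invoke Lemma \ref{lem most of it is good} to pick $L_2$ so that $m^\lambda(\tau)\geq L_2$ forces $\bar\mu_i(\BI_{L_1}^\lambda(\tau))\geq 1-\epsilon$, which in turn gives $\bar\mu_i(X_i\cap\BI_{L_1}^\lambda(e))\geq\bar\mu_i(\Phi^{-1}(e))-2\epsilon$ for every edge $e$. Lemma \ref{lem mass of and edge} converts the missing mass on each edge into a length bound
\[
\ell\bigl(e\setminus\Phi(X_i\cap\BI_{L_1}^\lambda(e))\bigr)\leq\frac{4\epsilon}{\omega_{\mu_i}(e)}.
\]
Because the two $\Phi$-projections are disjoint subsets of $e$, the complementary pieces must together cover at least $\ell(e)$, so at most one of these complements can exceed $\tfrac{1}{2}\ell(e)$. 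Equivalently, $\ell(e)\omega_{\mu_1}(e)>8\epsilon$ forces $\ell(e)\omega_{\mu_2}(e)\leq 8\epsilon$, and symmetrically. Replacing the working $\epsilon$ by $\epsilon/8$ at the outset gives exactly $E(\tau,\mu_1,\epsilon)\cap E(\tau,\mu_2,\epsilon)=\emptyset$.

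The main obstacle is coupling two rather different pictures: the Birkhoff ergodic theorem controls averages \emph{along} $\lambda$ via the segments $I_{x,L}^\lambda$, whereas the conclusion lives on \emph{edges} of $\tau$. The mediator is the set $\BI_{L_1}^\lambda(\tau)$ of starting points whose length-$L_1$ segment remains inside a single edge; only there does $\Phi$ interact cleanly with intersection counts (this is precisely the content of Lemma \ref{lem same intersection}). The quantitative hypothesis $m^\lambda(\tau)\geq L_2$ is what guarantees, via Lemma \ref{lem most of it is good}, that $\BI_{L_1}^\lambda(\tau)$ carries almost all of each $\bar\mu_i$, so that the ergodic-theoretic separation of $\mu_1$ and $\mu_2$ survives the transfer to $\tau$. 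Without uniformity assumptions on $\tau$ one would lose control here, which is precisely why this lemma is later upgraded through Lemma \ref{lem projection} using uniform train tracks to produce the sub-train-track statement of Theorem \ref{prop lm}.
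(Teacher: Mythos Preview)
Your proposal is correct and follows essentially the same route as the paper: apply Lemma~\ref{lem be} with $\delta<\tfrac12 d_\CA(\mu_1,\mu_2)$, use Lemma~\ref{lem same intersection} (with $L_1$ large enough) to get disjointness of the $\Phi$-images of $X_i\cap\BI_{L_1}^\lambda(\tau)$, feed $L_1$ into Lemma~\ref{lem most of it is good} to obtain $L_2$, and then combine with Lemma~\ref{lem mass of and edge} to convert the mass deficit into the edge-length inequality that rules out a common heavy edge. One wording slip: the sentence ``at most one of these complements can exceed $\tfrac12\ell(e)$'' is backwards---what is true is that the two disjoint \emph{projections} together have length at most $\ell(e)$, so at most one projection can exceed $\tfrac12\ell(e)$, equivalently at most one complement can be \emph{below} $\tfrac12\ell(e)$; your subsequent implication $\ell(e)\omega_{\mu_1}(e)>8\epsilon\Rightarrow\ell(e)\omega_{\mu_2}(e)\le 8\epsilon$ is nonetheless the correct conclusion of that computation.
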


We are now ready to conclude the proof of Proposition \ref{prop different colors}.

\subsection{Proof of Proposition \ref{prop different colors}}

  From Lemma \ref{lem almost done} we get that there is some $L_\epsilon$ such that for any almost geodesic train track $\tau$ carrying $\lambda$ and with $m^{\lambda}(\tau)\ge L_\epsilon$ for all $e\in E(\tau)$ we have
    $$E(\tau,\mu_i,\epsilon)\cap E(\tau,\mu_j,\epsilon)=\emptyset$$
  for all $i\neq j\in\{1,\dots,r\}$. This was the first claim of the proposition.
  Suppose now that we have $\nu_1\in\CM\CL(\tau)$ with
\begin{equation}\label{when is this finished??}
 \{e\in E(\tau)\text{ with }\omega_{\nu_i}(e)\neq 0\}\subset E(\tau,\mu_i,\epsilon).
\end{equation}
  For $\alpha\in\CA$ we can then compute
  $$\iota(\nu_1,\alpha)=\frac 1{2L_0}\int\vert I^{\nu_1}_{x,L_0}\cap\alpha\vert\ d\bar\nu_1(x),$$
  where $L_0=L(\mu,\CA,\delta)$ is as in Lemma \ref{lem be}, and where we abused notation slightly by writing $I^{\nu_1}_{x,L_0}$ instead of $I^{\supp(\nu_1)}_{x,L_0}$. Anyways, since $\supp(\nu_i)$ is carried by the almost geodesics train track $\tau$ we get as in the argument leading to Lemma \ref{lem same intersection} that $\vert I^{\nu_1}_{x,L_0}\cap\alpha\vert$ is basically the same as $\vert I^{\lambda}_{y,L_0}\cap\alpha\vert$ for any $y\in\lambda\cap\BI^{\lambda}_{L_0}(\tau)$ with $\Phi(y)=\Phi(x)$: 
$$ \frac 1{2L_0}\left\vert\vert I^{\nu_1}_{x,L_0}\cap\alpha\vert-\vert I^{\lambda}_{y,L_0}\cap\alpha\vert\right\vert<\delta+\frac{c(\CA)}{L_0}$$
with $c(\CA)$ as in Lemma \ref{lem same intersection}. 

Combining assumption \eqref{when is this finished??} and Lemma \ref{lem be} we get that for most  $x\in\supp(\nu_1)$ there is $y_x\in X_{\mu_1,\CA,\delta,\epsilon}$ with $\Phi(x)=\Phi(y_x)$. Now, if the train track $\tau$ is such that $m^{\lambda}(\tau)$ is sufficiently large, we get from Lemma \ref{lem most of it is good} that for most $x$ the corresponding $y_x$ belongs to $\BI^{\lambda}_{L_0}(\tau)$, say for all $x$ outside a set of $\nu_1$-measure $\rho(L)$ where $\lim_{L\to 0}\rho(L)=0$.

  On the other hand there is a constant which just depends on $\CA$ and the geometry of the underlying surface, bounding the number of intersections that a segment of length $L_0$ can have with $\CA$. We get altogether there is some $\text{const}>0$ with
  \begin{align*}
    \vert\iota(\mu_1,\alpha)-\iota(\nu_1,\alpha)\vert
    &\le \frac 1{2L_0}\int\big\vert\vert I^{\nu_1}_{x,L_0}\cap\alpha\vert-\vert I^{\lambda}_{y_x,L_0}\cap\alpha\vert\big\vert\ d\bar{\nu}_1(x)\\
    &\le \left(\delta+\frac{c(\CA)}{L_0}\right)(1-\rho(L))+\text{const}\cdot\rho(L).
  \end{align*}
  This means that, if we wanted to guarantee that $\nu_1$ and $\mu_1$ have basically the same intersection number with $\alpha$, first we should take $L_0=L(\mu_1,\CA,\delta)$ large as in Lemma \ref{lem be} and then we should make $L$ huge and assume that our train track $\tau$ satisfies $m^{\lambda}(\tau)\ge L$. We are done.\qed

\section{Approximating by two-sided curves}\label{sec:two-sided}


In this section we will prove Theorem \ref{main theorem} from the introduction, the central result of this paper. The main tool is Theorem \ref{prop lm} from the previous section: we will show that, when $\lambda$ has no one-sided leaves, each one of the train tracks $\tau_j^{\epsilon}$ given by that theorem carries a two-sided curve. To do so we will show that if a train track does not carry a two-sided closed curve, then the only measured laminations it does carry are one-sided multicurves which are not too long. We then conclude the proof using Lemma \ref{lem quantified Scharlemann} to argue that the measured laminations $\nu_i^{\epsilon}$ given by Theorem \ref{prop lm} are not such multicurves. In any case, we start by investigating which train tracks carry two-sided curves. 

\subsection{Train tracks without two-sided curves} Our goal here is to describe which train tracks fail to carry any two-sided curves. For expedience, and since it will suffice for our purpose, we will only consider one-vertex train tracks. 

\begin{prop}\label{prop one-vertex tt}
Let $\tau$ be a one-vertex train track. Either $\tau$ carries a two-sided simple closed curve or every measured lamination carried by $\tau$ is a one-sided multicurve, each of whose components passes through the vertex at most twice. 
\end{prop}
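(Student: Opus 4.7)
The plan is to suppose $\tau$ does not carry a two-sided simple closed curve and deduce the structure of the carried measured laminations. Let $A$ and $B$ denote the two sides of $v$ and let each edge of $\tau$ have type $AA$, $AB$, or $BB$ according to which sides its two half-edges lie on. The switch condition at $v$ reduces to $\sum_{e \text{ of type } AA}\omega(e)=\sum_{e \text{ of type } BB}\omega(e)$, and the carried measured lamination associated to a given weight vector is determined by the order-preserving ``position-matching'' resolution at the tie $T_v$, combined with the edge pairings of half-edges.

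First I would extract two constraints on $w_1|_{E(\tau)}$, the first Stiefel--Whitney class of $S$ restricted to edges of $\tau$. For each $AB$ edge $e$, the single-edge loop around $e$ is a carried curve visiting $v$ once, so by assumption it is one-sided; hence $w_1(e)\equiv 1\pmod 2$ for every $AB$ edge. For any pair $(e_{AA},e_{BB})$ of an $AA$ edge and a $BB$ edge, putting weight $1$ on each and $0$ on every other edge satisfies the switch condition and yields a single carried cycle visiting $v$ exactly twice, which by hypothesis must be one-sided, so $w_1(e_{AA})+w_1(e_{BB})\equiv 1\pmod 2$; up to swapping $A$ and $B$, one may then take $w_1\equiv 0$ on every $AA$ edge and $w_1\equiv 1$ on every $BB$ edge.

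The heart of the argument is then to show no carried simple closed curve visits $v$ at least three times. Suppose $\gamma$ is a carried simple closed curve whose walk $e_{i_1},\ldots,e_{i_k}$ has length $k\ge 3$, and let $n_{AA},n_{AB},n_{BB}$ be the multiplicities of the three edge types (so $n_{AA}=n_{BB}$ by the switch condition). If $\gamma$ traverses some $AB$ edge $e$, I consider the perturbed weight $\omega_\gamma+\mathbf{1}_e$, which still satisfies the switch condition. Tracing the extra strand on $e$ through the strand graph of the new resolution shows the resulting carried measured lamination is a single cycle of length $k+1$, traversing $e$ twice and every other edge of $\gamma$ once; its orientation sum modulo $2$ equals $w_1(\gamma)-w_1(e)\equiv 1-1=0$, so this is a carried two-sided curve, contradicting the assumption. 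If instead $\gamma$ traverses no $AB$ edge, then $n_{AB}=0$ and $n_{AA}=n_{BB}\ge 2$; picking any $e_{AA},e_{BB}\in\gamma$ and perturbing by $\mathbf{1}_{e_{AA}}+\mathbf{1}_{e_{BB}}$ preserves the switch condition, and the analogous analysis produces a single carried cycle of length $k+2$ whose orientation sum is $w_1(\gamma)-w_1(e_{AA})-w_1(e_{BB})\equiv 1-1=0$, again two-sided and contradictory.

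Lastly, I would show that no carried measured lamination has a non-atomic ergodic component. Any such component $\mu'$ would itself be carried by $\tau$ and approximable in $\CM\CL(\tau)$ by rationally weighted carried simple multicurves $\gamma_n$. By the previous steps each $\gamma_n$ is a one-sided multicurve whose components visit $v$ at most twice, but there are only finitely many carried simple closed curves of length at most two on a one-vertex train track (each one is a loop around an $AB$ edge, a two-edge $AA+BB$ cycle, or a two-edge cycle on a ``crossed'' pair of $AB$ edges). Hence the $\gamma_n$ lie in a finitely generated polyhedral cone, which is closed in $\CM\CL$, forcing $\mu'$ to be a weighted multicurve and contradicting that it is non-atomic. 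The main obstacle is the splicing claim in the previous step: one must verify that adding one extra unit of weight to an $AB$ edge (respectively to one $AA$ and one $BB$ edge simultaneously) in $\omega_\gamma$ genuinely elongates the original cycle by one (respectively by two) rather than splitting a component off. This comes down to a careful analysis of how the shift in cumulative mass on $T_v$ re-routes the position-matching resolution.
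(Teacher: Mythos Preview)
Your setup through the $w_1$ constraints is correct: the single-edge $AB$ loops and the two-edge $AA$--$BB$ cycles are carried, hence essential (this uses the standing train-track hypothesis on complementary regions, which you should make explicit), so the hypothesis forces $w_1\equiv 1$ on $AB$ edges and, up to swapping sides, $w_1\equiv 0$ on $AA$ and $w_1\equiv 1$ on $BB$.

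The genuine gap is the splicing step, and it is not a detail that can be patched by ``careful analysis of the shift in cumulative mass.'' In general, adding $\mathbf 1_e$ to the weight of a connected carried curve does \emph{not} produce a single longer cycle. Concretely: if the inserted $A$-strand and $B$-strand of $e$ land at the same index in the order-preserving matching (which happens, e.g., whenever the cumulative mass to the left of $e^A$ equals the cumulative mass to the left of $e^B$), the new strand closes up into a separate length-$1$ loop around $e$ and the original $\gamma$ is unchanged. The same phenomenon occurs for your $\mathbf 1_{e_{AA}}+\mathbf 1_{e_{BB}}$ perturbation: the extra strands can pair with each other to form a disjoint length-$2$ cycle rather than extending $\gamma$. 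So neither perturbation is guaranteed to produce a connected cycle, and your parity computation of $w_1$ for ``the'' resulting curve has nothing to apply to.

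There is a second, independent gap: even when the perturbed weight \emph{does} resolve to a single orientation-preserving circle, you must still check it is essential in the paper's sense. Orientation-preserving carried circles can bound M\"obius bands---the prototype is weight $2$ on a one-sided $AB$ edge, whose band-complex resolution is the boundary of a M\"obius band, hence not ``two-sided'' here. Your argument never rules this out for the spliced curve.

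For comparison, the paper avoids both issues by working in the opposite direction: it lists seven combinatorial conditions (Lemma~\ref{lem 7 or curve}), exhibits for each failure an explicit two-sided carried curve via pictures in the band model, and then shows directly that a track satisfying all seven decomposes the switch rectangle into blocks in which every carried lamination visibly splits into length-$\le 2$ one-sided pieces. That approach is more hands-on but bypasses any need to control how resolutions change under perturbation.
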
 

Recall that we only apply the adjectives one-sided and two-sided to curves which are essential---this means in particular that orientation preserving curves bounding M\"obius bands do not count as two-sided.
\medskip

Starting with the discussion of the proposition, consider $\tau$ a one-vertex train track. Since it will help to visualize what we describe here,  we will arbitrary choose to denote the outgoing half-edges at $v$ as \emph{top} half-edges and the incoming as \emph{bottom} half-edges.
We will have three types of edges: 
\begin{enumerate}
\item t--b edges, formed by a top half-edge and a bottom half-edge, 
\item t--t edges, formed by two top half-edges, 
\item b--b edges, formed by two bottom half-edges. 
\end{enumerate}

Since $\tau$ only has one vertex, every closed edge $e$ is a loop and hence determines a homotopy class of a simple closed curve, which we call the {\em curve associated to $e$}---note that this curve is carried by $e$ if and only if $e$ is of form t--b. Moreover, the {\em curve associated to a pair} of edges of type t--t and b--b is the homotopy class of the simple closed curve obtained as the concatenation of the two edges. Finally, we say an edge is two-sided (resp. one-sided) if the curve associated to it is two-sided (resp. one-sided). 

%

So far we have been thinking of train tracks as graphs, but it is well known that one can think of them as being made out of bands (see for example \cite{Otalbook}). In our concrete setting we represent our single vertex by a relatively wide and low rectangle that we imagine as a switch board. Accordingly, we take a long and thin band for each edge, looking a bit like a flat cable, and we plug the two short sides of the band somewhere at the top and bottom sides of out switchboard, possibly both ends on the top, or bottom, or one end on the top and the other on the bottom. We do that in such a way that at the end of the day the 2-complex that we obtain can be embedded into the surface as a regular neighborhood of our train track---in the language of \cite{book} what we have gotten is a {\em thickening} of the train track. See the left and center pictures in Figure \ref{fig:bands} for a schematic representation of the train track as a graph and as a band complex. The rightmost image in Figure \ref{fig:bands} is the set of instructions that one needs to follow to recover the train track: plug in the bands into the switchboard as indicated in the instructions manual, the two ends of the band labeled ``a" in the positions labeled by ``a" with the orientation indicated by the arrows. Note in particular that for t--b edges, two arrows in the same direction tell us that the edge associated is two-sided, while two arrows going in different directions tell us that the edge associated is one-sided; for t--t and b--b edges it is the other way around.

\begin{figure}[h]
\centering
\includegraphics[width=1\textwidth]{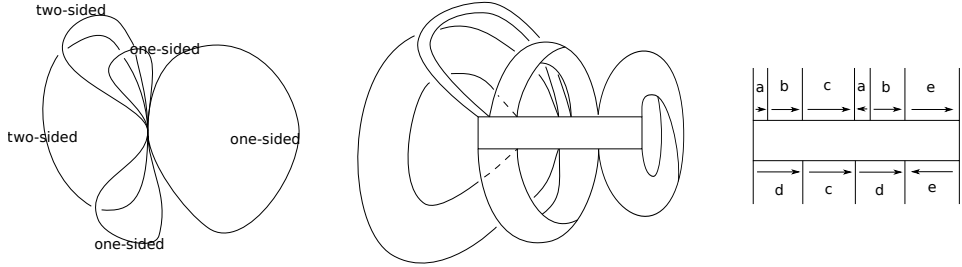}
\caption{On the left, a one-vertex train track as a graph. As a band complex in the middle. And on the right, the instructions manual indicating how the bands have to be plugged into the switchboard to build the band complex.  \label{fig:bands}} 
\end{figure}

Suggesting that the reader keeps Figure \ref{fig:defin} in mind, we will now look at some possible mutual positions of edges. 

\begin{defi}
We say that 
\begin{enumerate}
\item two t--b edges are \emph{crossing} if the two corresponding curves $\gamma$ and $\gamma'$ have intersection number $i(\gamma, \gamma')=1$,
\item a t--b edge $e$ is \emph{separated} from a pair of edges $e_t$ of type t--t and $e_b$ of type b--b if the curve $\gamma$ corresponding to $e$ and the curve $\gamma'$ corresponding to the pair $e_t, e_b$ have $i(\gamma, \gamma')=0$,
\item two edges of type t--t (resp. b--b) are \emph{crossing} if the corresponding curves $\gamma$ and $\gamma'$ have intersection number $i(\gamma, \gamma')=1$,
\item two edges $e$ and $e'$ of type t--t (resp. b--b) which are not crossing, can be either \emph{nested} or \emph{separated} according to whether, for any fixed b--b (resp. t--t) edge $e_0$, the curves $\gamma$ and $\gamma'$ associated to the pairs $e,e_0$ and $e',e_0$ have $i(\gamma, \gamma')=0$ (then they are nested) or $i(\gamma, \gamma')=2$ (then they are separated).
\end{enumerate}

\begin{figure}[h]
\centering
\includegraphics[width=1\textwidth]{}
\caption{Some examples of crossing and non-crossing t--b edges (1), t--b edges separated and non-separated from a t--t, b--b pair (2), crossing, separated and nested t--t edges (3-4). Note that there are no arrows because the definitions are the same whether the edges are one- or two-sided. \label{fig:defin}}
\end{figure}

\end{defi}

We are now ready to characterize those one-vertex train tracks that do not carry any two-sided simple closed curve. 

\begin{lem}\label{lem 7 or curve}
Let $\tau$ be a one-vertex recurrent train track. 
If any of the following statements fails, then $\tau$ carries a two-sided simple closed curve. 
\begin{enumerate}
\item All edges of type t--b are one-sided. 
\item All edges of type t--t are two-sided (resp. one-sided), while all edges of type b--b are one-sided (resp. two-sided). 
\item All t--b edges are non-crossing. 
\item All t--b edges are separated from all pairs of t--t and b--b edges.
\item If there are two or more t--t edges, then they are crossing (resp. nested).
\item If there are two or more b--b edges, then they are nested (resp. crossing). 
\item There is at most one t--t edge or at most one b--b edge. 
\end{enumerate}
\end{lem}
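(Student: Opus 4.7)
The plan is to argue the contrapositive in a case-by-case fashion: for each of the seven conditions, we show that its failure implies $\tau$ carries a two-sided simple closed curve. The two basic sources of carried simple closed curves in a one-vertex train track are (i) a single t--b edge, whose two half-edges lie on opposite sides of the switchboard and hence automatically close up into a legal loop at the vertex, and (ii) a concatenation of a t--t edge with a b--b edge, whose two vertex-turns are each of legal top$\leftrightarrow$bottom type. In both cases, the one-sided versus two-sided character of the resulting curve is read off by combining the twist contributions of the bands traversed, as encoded by the arrow conventions of Figure \ref{fig:bands}.

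Cases (1) and (2) are handled directly from these two constructions. If (1) fails, some t--b edge is itself two-sided and we are done. Assuming then that every t--b edge is one-sided, one verifies by a short parity computation that the concatenation of a t--t edge with a b--b edge is two-sided precisely when exactly one of the two constituent edges is itself two-sided (that is, when their parities differ). Condition (2) is exactly the statement that no such mismatched pair exists, so its failure produces the desired two-sided carried curve.

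For cases (3)--(7), we may assume both (1) and (2) hold. In each case, failure of the condition furnishes two simple closed curves $\gamma_1, \gamma_2$, each associated either to a single t--b edge or to a pair consisting of one t--t and one b--b edge, with $\iota(\gamma_1,\gamma_2)\in\{1,2\}$. The strategy is to consider the regular neighborhood $N$ of $\gamma_1 \cup \gamma_2$ inside the thickening of $\tau$. Since $N$ is a regular neighborhood of a sub-train-track of $\tau$, each component of $\partial N$ is a simple closed curve carried by $\tau$, and each is two-sided because it bounds a subsurface. An Euler characteristic computation gives $\chi(N) = -\iota(\gamma_1,\gamma_2)\in\{-1,-2\}$, and in each of the finitely many combinatorial configurations arising from the failure of (3)--(7), one checks that at least one component of $\partial N$ is essential in $S$, and so provides the sought two-sided carried simple closed curve.

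The main obstacle I anticipate is the orientation bookkeeping in case (2): one must verify carefully that the combinatorial rule for combining band twists under the arrow convention yields the claimed parity for a t--t/b--b concatenation. A secondary difficulty is the essentiality check in cases (3)--(7), where the recurrence hypothesis on $\tau$ is used to rule out the degenerate configurations in which $\partial N$ could consist entirely of components that are null-homotopic, peripheral, or boundaries of M\"obius bands.
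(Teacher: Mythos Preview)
Your handling of case (1) matches the paper. In case (2), however, your parity computation is backwards. Orientation-reversal is a homomorphism $\pi_1(S,v)\to\BZ/2\BZ$, so the concatenation $e_t\cdot e_b$ is two-sided precisely when $e_t$ and $e_b$ have the \emph{same} sidedness, not when they differ. You also invert the meaning of condition (2): it asserts that every t--t/b--b pair is mismatched, so its failure produces a same-parity pair. These two errors happen to cancel, so your conclusion is right, but the reasoning is not.

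The more serious gap is in cases (3)--(7). Your central claim---that the boundary of the regular neighborhood $N$ of $\gamma_1\cup\gamma_2$ is carried by $\tau$---is false. The thickening of a sub-train-track has boundary arcs running along the short sides of the switchboard, and those arcs are tangent to the tie foliation, not transverse to it. Concretely, take case (3): two one-sided crossing t--b edges $e_1,e_2$. Here $N$ is a one-holed Klein bottle and $\partial N$ represents $\gamma_1^2\gamma_2^2$ in $\pi_1(N)$. The only integral weight vector on $\{e_1,e_2\}$ hitting each edge twice is $(2,2)$, but tracing through the switchboard shows that this weight system realizes two parallel copies of the curve $\gamma_1\gamma_2$, not $\gamma_1^2\gamma_2^2$. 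So $\partial N$ is simply not carried, and there is no general mechanism producing it as a carried curve.

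The paper proceeds instead by explicit construction: in each of cases (3)--(7) it exhibits a specific small-weight carried curve (traversing the relevant bands once or twice each) and checks, from the band-twist combinatorics, that the total number of half-twists is even. For instance in case (3) the curve with weights $(1,1)$ traverses two half-twisted bands and is therefore two-sided. Cases (5) and (6) additionally use recurrence to guarantee that a b--b (resp.\ t--t) edge is available to close up the curve. Your regular-neighborhood idea does not shortcut this casework.
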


Recall that a train track is recurrent if it carries a curve which gives positive weight to every edge. 

\begin{proof}
First note that if (1) fails, meaning that there is a t--b edge which is two-sided, then the corresponding curve is a two-sided simple closed curve carried by $\tau$. We can suppose from now on that (1) holds true.

Similarly, if there is a pair of edges of types t--t and b--b which are both one-sided or both two-sided, then the curve corresponding to the pair is a two-sided simple closed curve. 
This means that if (2) fails then $\tau$ carries a two-sided curve and we are done. We can assume from now on that (2) holds true. But not only that. We can assume, up to swapping the roles of top and bottom, that all the t--t edges are two-sided and all the b--b edges are one-sided. 

Now assume that (3) fails, and hence that there is a pair of t--b edges which are crossing. Then one can construct a two-sided simple closed curve running along both of them as shown in Figure \ref{fig:fail123}. Ergo, we can also assume that (3) holds.

\begin{figure}[h]
\centering
\includegraphics[width=0.7\textwidth]{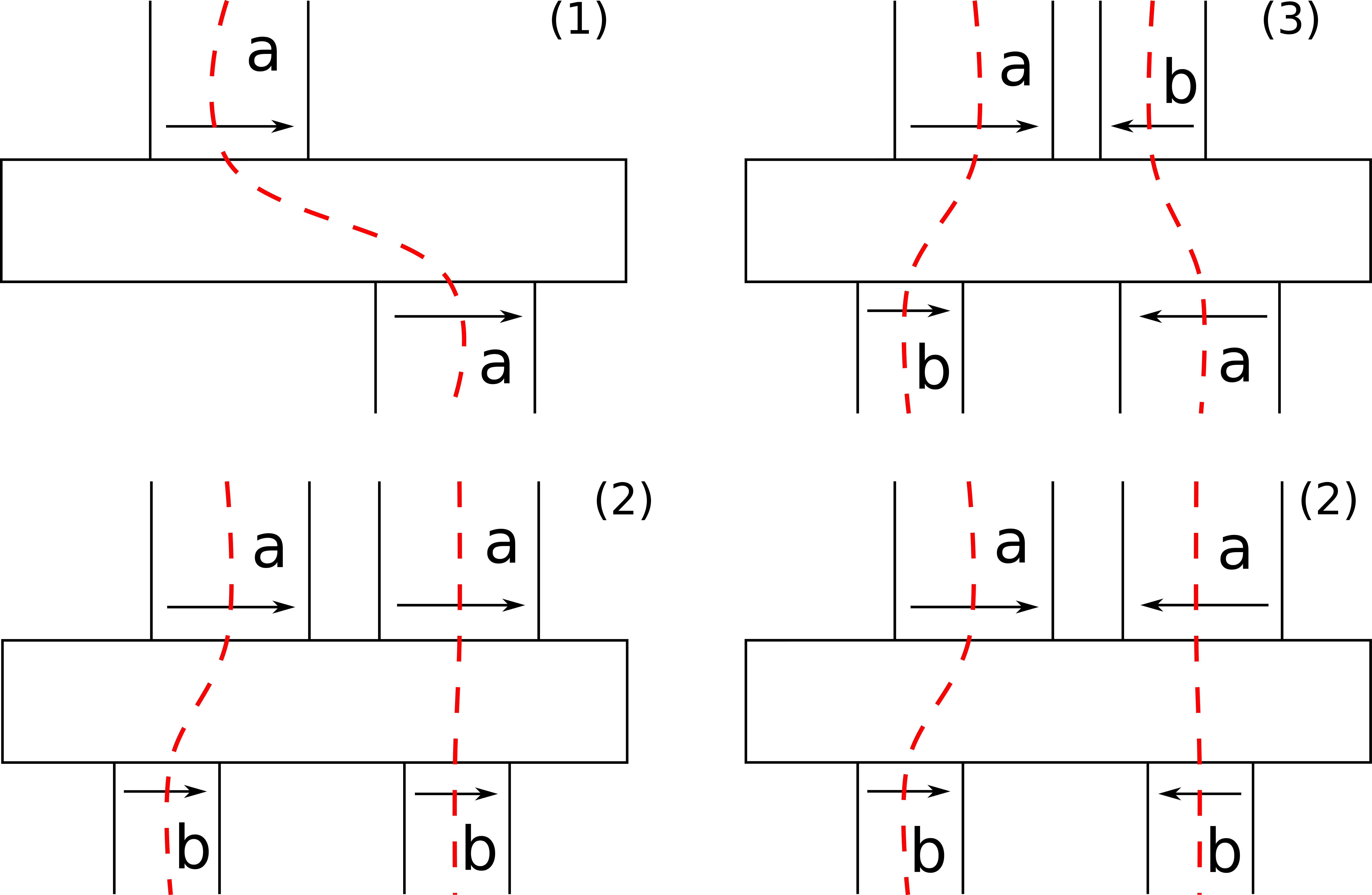}
\caption{The two-sided simple closed curves carried by $\tau$ if (1), (2) or (3) in Lemma \ref{lem 7 or curve} fail. \label{fig:fail123}}
\end{figure}

If there is a t--b edge which is non-separated from a pair of t--t and b--b edges, then one can find a two-sided simple closed curve passing once through each edge. Figure \ref{fig:fail4} shows an example, and all other possible configurations can be obtained from the same figure by flipping the picture horizontally or vertically. We can thus assume that (4) holds true.

\begin{figure}[ht]
\centering
\includegraphics[width=.7\textwidth]{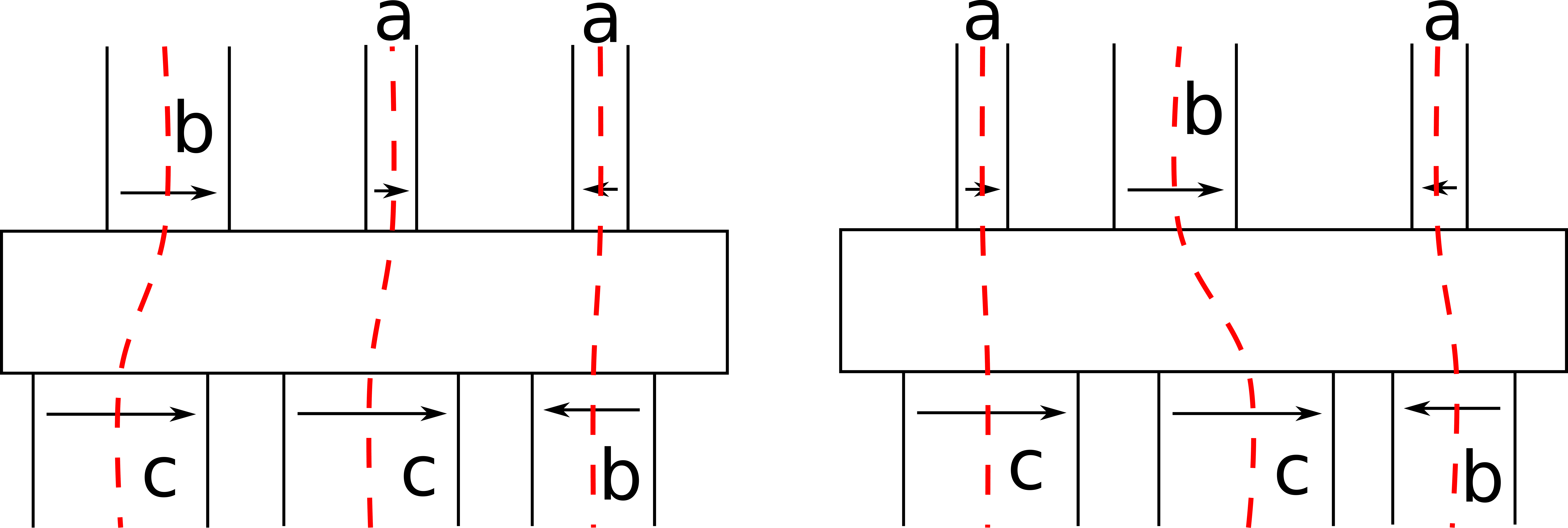}
\caption{The two-sided simple closed curve carried by $\tau$ if (4) in Lemma \ref{lem 7 or curve} fails. \label{fig:fail4}}
\end{figure}

Assume now that there are two (two-sided) t--t edges that are either nested or separated. Recurrence implies that there must also be at least one (one-sided) b--b edge. As indicated by the top half of Figure \ref{fig:fail56}, we get a two-sided simple closed curve which passes once along each t--t edge and twice along the b--b edge. This means that we can suppose that also (5) holds. 

Similarly, if (6) fails we get a simple two-sided curve as in the bottom half of Figure \ref{fig:fail56}. We can thus assume that (6) holds.

\begin{figure}[h]
\centering
\includegraphics[width=.7\textwidth]{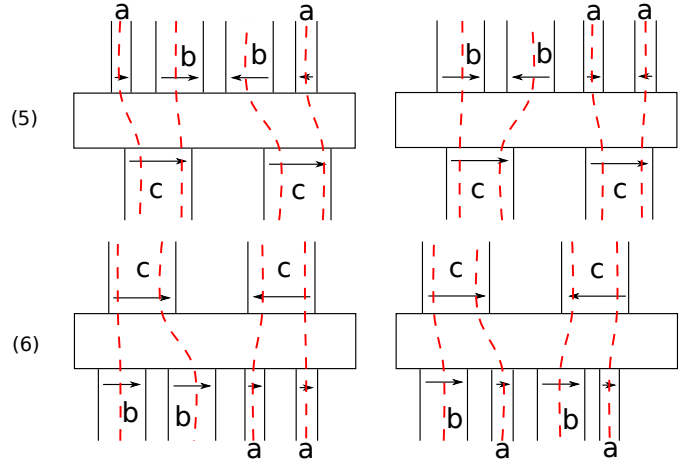}
\caption{The two-sided curve carried by $\tau$ if either (5) or (6) in Lemma \ref{lem 7 or curve} fails.\label{fig:fail56}}
\end{figure}

Finally, assume that (7) fails, that is that we have two t--t edges and two b--b edges and note that by the above we can assume that the t--t are crossing and the b--b are nested. Then we can construct a two-sided simple closed curve passing once through each edge as in Figure \ref{fig:fail7}.

\begin{figure}[ht]
\centering
\includegraphics[width=0.4\textwidth]{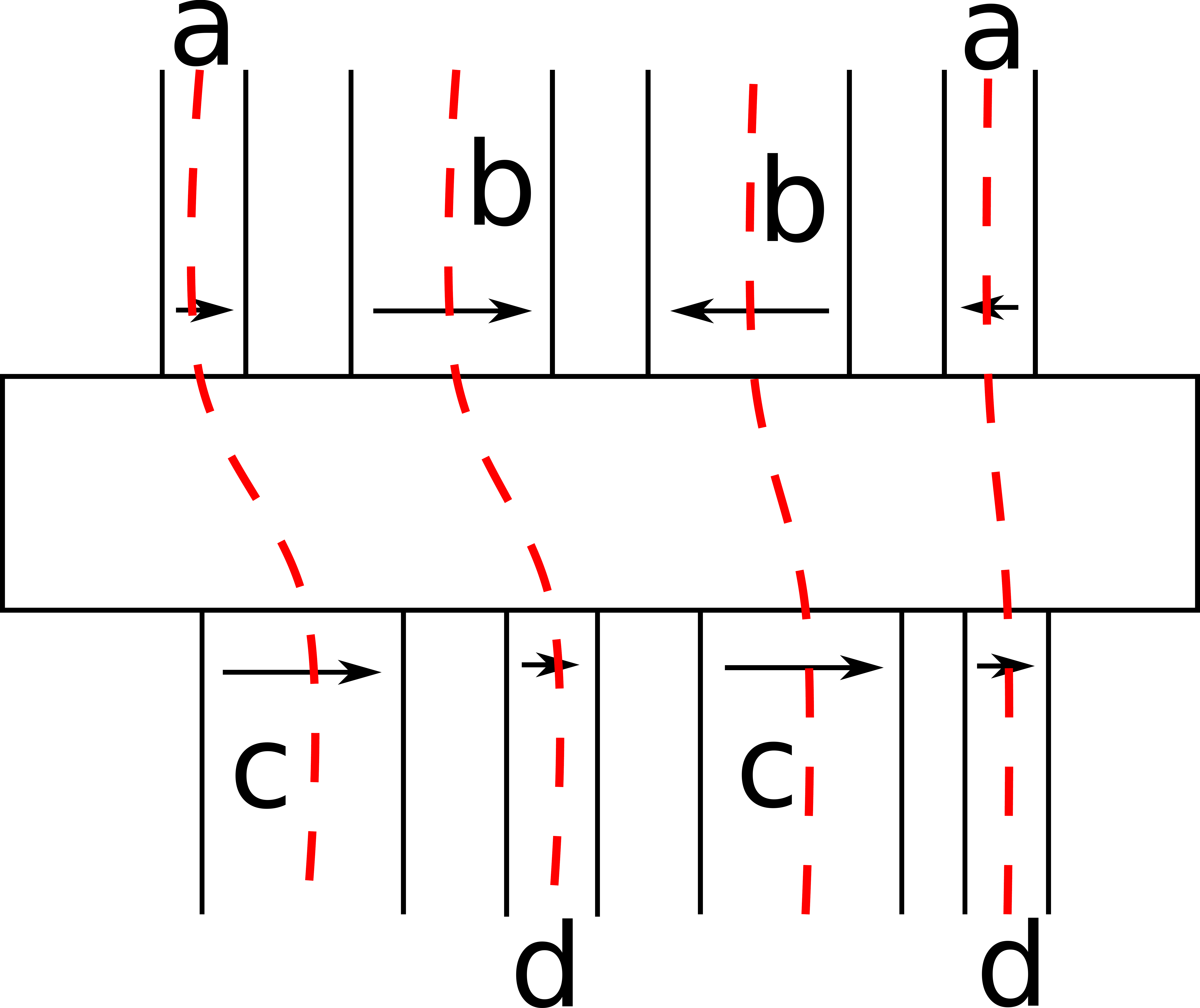}
\caption{The two-sided curve carried by $\tau$ if (7) in Lemma \ref{lem 7 or curve} fails.\label{fig:fail7}}
\end{figure}

As we wanted, we have found our two-sided curve if any one of the conditions (1)--(7) fails to be true.
\end{proof}

Our next goal is to figure out which measured laminations are carried by recurrent train tracks $\tau$ for which conditions (1)--(7) in Lemma \ref{lem 7 or curve} hold true.

Let us start by dissecting such a train track $\tau$. From (2) we know that, up to switching the roles of top and bottom, we can assume that all t--t edges are two-sided and that all b--b edges are one-sided. Moreover, from (7) we know that there can be either at most a single t--t edge or at most a single b--b edge. The discussions of both main cases (there is a single t--t edge or a single b--b edge) are very similar and the case that such edges do not exist is just a degenerate version of the main cases. To avoid repeating ourselves we will thus assume that there is a single b--b edge, leaving the other cases to the reader.

Summing up, we are assuming that there is one and only one b--b edge which is moreover one-sided (see Figure \ref{fig:1to7}, part (a)).  

Now recurrence implies that there is also at least one t--t edge: if no such t--t edge exists then all solutions of the switch equations give $0$-value to our b--b edge. Now, every t--t edge is two-sided by (2), but we might have more that one such edge. However, by (5), each pair of t--t edges must be crossing, so their mutual positions need to be like in Figure \ref{fig:1to7}, part (b).

Now we need to see where the possible t--b edges could be plugged in. First of all, if there are any t--b edges, then they are one-sided by (1).  Now, by (4), they need to be separated from each t--t, b--b pair. This means that they can be in either of three possible positions with respect to the other edges, as in Figure \ref{fig:1to7}, part (c). Finally, there could be more than one t--b edges in any of these three positions, but by (3) they need to be mutually non-crossing. See Figure \ref{fig:1to7} part (d) for the schematic description of our train track. (Figure \ref{fig:1to7} part (e) is a schematic description of the train track if there is instead a single t--t edge but maybe many b--b edges). 


\begin{figure}[t]
\centering
\includegraphics[width=\textwidth]{}
\caption{The instruction manual of a train track satisfying (1)--(7) in Lemma \ref{lem 7 or curve} in the "single b--b edge" model: first plug in the unique b--b edge (a), then plug in the t--t edges making sure that they are crossing (b), and finally start plugging in the t--b edges making sure that are separated from every t--t, b--b pair (c). The final product is represented in (d). Finally, (e) is a representation of the final product if what you have is the "single t--t edge" model instead.\label{fig:1to7}}
\end{figure}

\medskip

Once that we understand how a train track $\tau$ satisfying (1)--(7) in Lemma \ref{lem 7 or curve} looks like, let us figure out what are the measured laminations carried by $\tau$. Given a solution $(\omega(e))_{e\in E(\tau)}$ to the switch equation at the single vertex $v$ we can represent each edge $e$ by a band of thickness $\omega(e)$. The band corresponding to the vertex then has thickness equal to the sum of the weights of all outgoing (top) half-edges, which is by definition the same as the sum of the weights of all the incoming (bottom) half-edges. 

First note that the t--b edges give equal contributions to the incoming and outgoing components.
Then, to satisfy the switch equation, one needs to have that the sum of the weights of the t--t edges must be equal to the weight of the single b--b edge.

\begin{figure}[t]
\centering
\includegraphics[width=.7\textwidth]{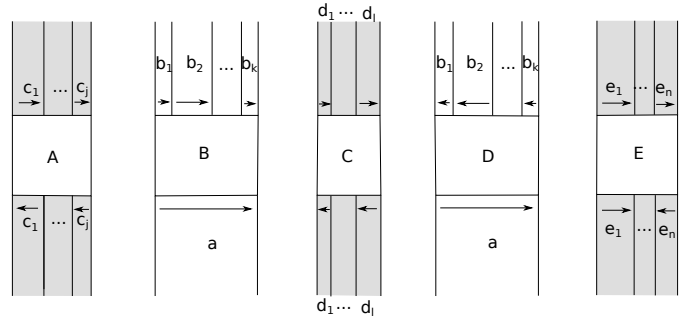}
\caption{Splitting the train track $\tau$ represented by Figure \ref{fig:1to7} part (d) into 4 disjoint train tracks. \label{fig:division}}
\end{figure}

This means that we can subdivide the rectangle representing the vertex into five (possibly degenerate) sub-rectangles (see Figure \ref{fig:division}). 
In each rectangle the sum of the weights of the edges at the top is equal to the sum of the weights of the edges at the bottom. 
The first rectangle (from the left), $A$, has attached to it the top and bottom extremes of non-crossing t--b edges. 
The second rectangle $B$ has attached to it one extreme of the single one-sided edge at the bottom and one extreme of each of the crossing two-sided edges at the top. 
The third rectangle $C$ has the same non-crossing t--b edges attached to both the top and the bottom. 
The fourth rectangle $D$ has the other extremes of the edges attached to $B$. 
The fifth rectangle $E$ has again the same non-crossing t--b edges attached to both the top and the bottom. 

In order to see which measured laminations are carried by $\tau$, inside each sub-rectangle we need to match each interval at the top (resp. bottom) with an interval at the bottom (resp. top) of same length, without intersections. 
Inside $A$, the set of intervals at the top match exactly the ones at the bottom in length. 
This means that we can only connect a t--b band to itself. 
Then a measured lamination $\lambda$ carried by $\tau$ can be written as $\lambda = c_1\gamma_1 + \dots c_a \gamma_a + \lambda'$, where $a$ is the number of t--b bands attached to $A$, $\gamma_i$ is the one-sided curve associated to each one of these bands and $\lambda'$ is carried by the sub-train track obtained removing $A$.
Note that each component $\gamma_i$ passes through the vertex only once.
The exact same thing can be done with $E$. 

Inside $B$, the sum of the weights at the top is equal to the weight of the single band at the bottom. 
So we need to subdivide the bottom interval in $b$ subintervals, where $b$ is the number of crossing two-sided bands at the top of the same length of the ones at the top. 
Then we match the interval in the only possible way to not intersect. 
A similar thing needs to be done in $D$. 
Because of the way the bands attached to $B$ and $D$ are connected, the components of $\lambda$ coming from these rectangles can only be one-sided weighted curves, each of which passes through the vertex exactly twice. 

Finally, inside $C$, the intervals are again matching because there are only t--b edges. 
This means that the components given by $C$ are again weighted one-sided curves passing through the vertex exactly once. 
\medskip

Summing up, we have proved that if a recurrent one-vertex train track $\tau$ satisfies conditions (1)--(7) in Lemma \ref{lem 7 or curve} then every measured lamination carried by $\tau$ is a one-sided weighted multicurve, each of whose components passes through the vertex at most twice. Since by Lemma \ref{lem 7 or curve} any recurrent one-vertex train track for which any one of the conditions (1)--(7) fails carries a two-sided curve, we have proved Proposition \ref{prop one-vertex tt}. \qed

\subsection{Density of two-sided curves in $\CM\CL^+(S)$}

We can now prove Theorem \ref{main theorem} from the introduction, stating that $\CP\CM\CL^+(S)$ is exactly the closure of the set of two-sided curves in $\CP\CM\CL(S)$: 

\begin{named}{Theorem \ref{main theorem}}
  Let $S$ be a connected, possibly non-orientable, non-exceptional hyperbolic surface of finite topological type. The set of two-sided curves is dense in $\CP\CM\CL^+(S)$.
\end{named}

Proposition \ref{prop one-vertex tt} will play a key role in the proof of the above theorem. In particular, in the course of the proof we will have to pass from dealing with a given train track to a one-vertex train track. Let us explain how to do this. Suppose $\tau$ is a train track carrying a measured lamination $\nu$ and let $e$ be an edge with maximal weight. Take a short transversal $I$ to $e$, think of it as a transversal to $\nu$, and consider the first return map. Identify arcs that are homotopic in $(S, I)$ and then collapse $I$ to a point $v$. The result is a refinement $\hat{\tau}$ of $\tau$ with a single vertex $v$.

\begin{proof}
Let $\mu\in\CM\CL^+(S)$ be the measured lamination that we want to approximate by weighted simple two-sided curves. Denote by $\lambda=\supp(\mu)$ the support of $\mu$ and let $\tau$ be a train track carrying $\lambda$ in a filling way. We will approximate $\mu$ by two-sided multicurves carried by $\tau$. This means in particular that we can assume without loss of generality that $\lambda$ is connected. Noting that if $\lambda$ is a closed curve then it is two-sided because we are assuming that $\mu\in\CM\CL^+(S)$, we see that the only case we truly need to consider is that $\lambda$ is connected but is not a closed curve---meaning in particular that $\lambda$ has no compact leaves. This is the setting from now on.

Well, let then $\mu=\sum_{j=1}^n a_j\mu_j$ be the ergodic decomposition of $\mu$, with $\mu_j\in\CM\CL(\lambda)$ normalized in such a way that all of the $\mu_j$ have unit length, and fix also a finite collection $\CA$ of curves separating measured laminations. For $\epsilon$ very small---we will make it tend to $0$---let $\tau^{\epsilon}$, $\nu_j^{\epsilon}$, and $\tau_j^{\epsilon}$, for $j=1,\ldots, n$, be given by Theorem  \ref{prop lm}. Forgetting for now the $\nu_j^\epsilon$'s, recall that there is some uniform $C$ such that $\tau^\epsilon$ is a $(C,\lambda)$-uniform refinement of $\tau$, that $\tau_j^\epsilon$ is a sub-train track of $\tau^\epsilon$ for each $j$, that we have $\tau^\epsilon_i\cap\tau^\epsilon_j=\emptyset$ whenever $i\neq j$, and that we have 
$$d_{\CA}(\mu_j^\epsilon,\mu_j)<\delta(\epsilon)$$
for any unit length $\mu_j^\epsilon\in\CM\CL(\tau_j^\epsilon)$. In the proof of Corollary \ref{kor lm} we argued that all of this implies that if we take any curve $\gamma_j^\epsilon$ carried by $\CM\CL(\tau_j^\epsilon)$ and if we chose weights $a_j^\epsilon$ wisely, then the weighted multicurves $\gamma^\epsilon=\sum_{j=1}^na_j^\epsilon\cdot\gamma_j^\epsilon$ converge to $\mu$. What is different now is that we have to argue that we can choose the curves $\gamma_j^\epsilon$ to be two-sided.

This is where the $\nu_j^\epsilon$ come into the game, but first note that any loopy edge is two-sided---this means that we can assume that our train tracks $\tau_j^\epsilon$ do not have loopy edges. From Theorem \ref{prop lm} we get that $\nu_j^\epsilon$ has unit length, is carried by $\tau^\epsilon_0$, and satisfies 
$$\ell(e)\cdot\vert \omega_{\mu_j}(e) - \omega_{\nu_j^\epsilon}(e)\vert<\delta(\epsilon)$$
where $\delta(\epsilon)\to 0$. The assumption that $\tau_j^\epsilon$ has no loopy edges implies that $\omega_{\nu_i^\epsilon}(e)=0$ for every loopy $e\in E(\tau^\epsilon)$, and hence that Lemma \ref{lem quantified Scharlemann} applies. Since $\mu_j$ is supported by $\lambda$, we know for a fact that it has no atom, and a fortiori no one-sided atom. We get thus from Lemma \ref{lem quantified Scharlemann} that any one-sided atom $c_j^\epsilon\cdot\gamma_j^\epsilon$ of $\nu_j^\epsilon$ has to satisfy that
\begin{equation}\label{eq I am enjoying this}
\lim_{\epsilon\to 0} c_j^\epsilon\cdot\ell(\tau^\epsilon)=0.
\end{equation}
We are going to argue that this implies that $\tau^\epsilon_j$ carries a two-sided curve, as we wanted to know. 

Seeking a contradiction, suppose that this is not the case and let $e$ be an edge of $\tau^{\epsilon}_j$ with maximal possible weight $\omega_{\nu_j}^{\epsilon}(e)$. Since $\nu_j^{\epsilon}$ is unit length, since $\tau_j^{\epsilon}$ is uniform without loopy edges, and since there is a universal bound on how many edges it can have, we have 
\begin{equation}\label{eq talked to pepe}
\omega_{\nu_j^\epsilon}(e)\ge\frac{\const}{m^\lambda(\tau^\epsilon_j)}\ge\frac{\const}{m^\lambda(\tau^\epsilon)}
\end{equation}
where $m^\lambda(\cdot)$ is as in \eqref{eq talked to Hugo2}. Now refine $\tau^{\epsilon}_j$ to a one-vertex train track $\hat{\tau}^{\epsilon}_j$ as explained above, using a transversal to the edge $e$ and the first return map of $\nu_j^{\epsilon}$. Noting that any two-sided curve in $\hat\tau_j^\epsilon$ is also carried by $\tau_j^\epsilon$ we get from our assumption and from Proposition \ref{prop one-vertex tt} that every measured lamination carried by $\hat\tau_j^\epsilon$ is a one-sided multicurve whose components pass through the vertex at most twice. This applies in particular to the largest sublamination $\sum_k c_k^{\epsilon,j}\cdot\gamma_k^{\epsilon,j}$ of $\nu_j^\epsilon$ carried by $\hat\tau_j^\epsilon$ (the whole of $\nu^\epsilon_j$ if $\tau_j^\epsilon$ is connected). Noting that each one of the one-sided curves $\gamma_k^{\epsilon,j}$ pass through the vertex of $\hat\tau_j$ at most twice, we get that $\sum_k c_k^{\epsilon,j}\le 2\cdot \omega_{\nu_j^\epsilon}(e)$. Since the number of components is bounded we have a constant such that there is always some $k$ with $c_k^{\epsilon,j}\ge\const\cdot\omega_{\nu_j^\epsilon}(e)$. Up to replacing the constant by another constant we get from \eqref{eq talked to pepe} that for all $\epsilon$ and $j$ the measured lamination $\nu_j^\epsilon$ has an atom of at least weight $c_k^{\epsilon,j}\ge\frac{\const}{m^\lambda(\tau^\epsilon)}.$ Since $\tau^\epsilon$ is $(C,\lambda)$-uniform we also get, up to again replacing the constant by yet another one, that $\nu_j^\epsilon$ has an atom whose weight satisfies 
$$c_k^{\epsilon,j}\ge\frac{\const}{\ell(\tau^\epsilon)}.$$
This contradicts \eqref{eq I am enjoying this}. 
\end{proof}

Before moving on we point out that Theorem \ref{main theorem}, or rather its proof, yields a bound for the number of mutually singular ergodic measured laminations with given support. 

Indeed, suppose that $\lambda$ is a recurrent lamination in $S$ and let $\mu_1,\dots,\mu_n\in\CM\CL(S)$ be mutually singular ergodic measured laminations with support $\lambda$. By Corollary \ref{kor lm} we can find a sequence of simple multicurves $c^1_i\cdot\gamma^1_i+\dots+c^n_i\cdot\gamma^n_i$ such that 
\begin{equation}\label{eq levitt}
\mu_j=\lim_{i\to\infty}c_i^j\cdot\gamma^j_i.
\end{equation}
It follows directly that $n$ is bounded by the maximal cardinality $c(S)$ of a multicurve in $S$. Now suppose that $\lambda$ has no one-sided leaves. Then, as in the proof of Theorem \ref{main theorem} we get that we can assume that each of the curves $\gamma_i^j$ in \eqref{eq levitt} can be chosen to be two-sided. This means that in this case $n$ is bounded by the maximal cardinality $c^+(S)$ of a two-sided multicurve in $S$. We have proved the following:

\begin{kor}\label{kor number of measures}
Let $c(S)$ and $c^+(S)$ be respectively the maximal number of components of a multicurve and of a two-sided multicurve in $S$. Every lamination $\lambda\in\CL(S)$ supports at most $c(S)$ mutually singular ergodic transverse measures. Moreover, if $\lambda$ has no one-sided leaves then it supports at most $c^+(S)$ mutually singular ergodic transverse measures.\qed
\end{kor}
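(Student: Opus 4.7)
The plan is to deduce Corollary \ref{kor number of measures} directly from Corollary \ref{kor lm} together with the proof of Theorem \ref{main theorem}, since the work of producing the approximating multicurves has already been done. Given mutually singular ergodic measured laminations $\mu_1,\dots,\mu_n\in\CM\CL(S)$ all supported by some $\lambda\in\CL(S)$, I would first handle trivial cases: if $\lambda$ has a closed leaf then $\lambda$ is itself that closed curve and the statement is immediate, and if $\lambda$ is disconnected one applies the argument componentwise. So assume $\lambda$ is connected, recurrent, and has no compact leaves. After rescaling, we can take each $\mu_j$ to be of unit length.

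Next I would apply Corollary \ref{kor lm} to produce a sequence of weighted simple multicurves $c_i^1\cdot\gamma_i^1+\dots+c_i^n\cdot\gamma_i^n$ with $\mu_j=\lim_{i\to\infty}c_i^j\cdot\gamma_i^j$ for each $j$. The point is that simplicity of the multicurve means the curves $\gamma_i^1,\dots,\gamma_i^n$ are pairwise disjoint, and for $i$ large enough they must also be pairwise non-isotopic (otherwise some $\mu_j$ and $\mu_{j'}$ would be proportional and hence not mutually singular, contradicting $\mu_j\neq\mu_{j'}$). Thus $\gamma_i^1\cup\dots\cup\gamma_i^n$ is an honest multicurve with exactly $n$ components, which forces $n\le c(S)$. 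This gives the first assertion.

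For the sharper bound when $\lambda$ has no one-sided leaves, I would revisit the end of the proof of Theorem \ref{main theorem}: the train tracks $\tau_j^\epsilon$ provided by Theorem \ref{prop lm} each carry a two-sided simple closed curve, because the alternative forced by Proposition \ref{prop one-vertex tt} would produce one-sided atoms of the $\nu_j^\epsilon$ in contradiction with Lemma \ref{lem quantified Scharlemann} (using that $\mu_j$, being supported on a lamination with no one-sided leaves, has no one-sided atoms). Hence one can choose the approximating $\gamma_i^j$ in $\CM\CL(\tau_j^\epsilon)$ to be two-sided, and then $\gamma_i^1\cup\dots\cup\gamma_i^n$ is a two-sided multicurve with $n$ components, so $n\le c^+(S)$. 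The only nontrivial input is the one already invested in Theorem \ref{prop lm} and Theorem \ref{main theorem}; once those are in hand, the counting is essentially immediate, and I expect no additional obstacle.
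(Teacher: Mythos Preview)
Your proposal is correct and follows essentially the same route as the paper: apply Corollary \ref{kor lm} to get approximating simple multicurves and read off $n\le c(S)$, then invoke the argument from the proof of Theorem \ref{main theorem} to upgrade the components to two-sided curves when $\lambda$ has no one-sided leaves, yielding $n\le c^+(S)$. Your version is in fact slightly more careful than the paper's, since you explicitly argue that the $\gamma_i^j$ are pairwise non-isotopic for large $i$ (a point the paper leaves implicit when it says ``it follows directly'').
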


As we mentioned in the introduction, this result is due to Levitt \cite{Levitt} in the orientable case.

\section{Orbit closures in $\CM\CL$}\label{sec:orbit closures ML}

In this section we prove Theorem \ref{thm orbit closures ML}. We continue to assume that $S$ is a non-exceptional hyperbolic surface. As in the introduction we associate to a measured lamination $\lambda\in \CM\CL(S)$ a {\em complete pair} $(R_\lambda,\gamma_\lambda)$ as follows. Start by the decomposition $\lambda=\gamma_\lambda + \lambda'$ where $\gamma_\lambda$ is the atomic part of $\lambda$ and where $\lambda'$ has no closed leaves. Let then $R_\lambda$ be the possibly disconnected subsurface $R_{\lambda}$ of $S$ obtained by taking the union of components of $S\setminus\gamma_\gamma$ which contain a non-compact leaf of $\lambda$. We note that none of the components of $R_{\lambda}$ can be a pair of pants, a one-holed M\"obius band, or a one-holed Klein bottle---in other words, no component of $R_\lambda$ is exceptional.

As in the introduction we define
$$\gamma_\lambda+\CM\CL^+(R_\lambda)=\{\gamma_\lambda+\mu\text{ with }\mu\in\CM\CL^+(R_\lambda)\}\subset\CM\CL(S)$$
and 
$$\CG_\lambda= \cup_{\phi\in\Map(S)}\phi\big(\gamma_\lambda+\CM\CL^+(R_\lambda)\big).$$
We can now recall the statement of Theorem \ref{thm orbit closures ML}:

\begin{named}{Theorem \ref{thm orbit closures ML}}
  Let $S$ be a connected, possibly non-orientable, non-exceptional hyperbolic surface of finite topological type. We have $\overline{\Map(S)\cdot\lambda}=\CG_\lambda$ for any measured lamination $\lambda\in\CM\CL(S)$.
\end{named}

As we mentioned in the introduction, this theorem is due to Lindenstrauss-Mirzakhani \cite{Lindenstrauss-Mirzakhani} in the orientable case (see also Hamenst\"adt \cite{hamenstadt:measures}). They obtained it as a corollary of the classification of mapping class group invariant measures on $\CM\CL(S)$, always for $S$ orientable. Given that this measure classification does not exist (for good reasons) in the non-orientable case, we have to give a direct proof of the theorem. The argument bellow works both in the orientable and non-orientable case and uses only elementary facts about measured laminations and Theorem \ref{main theorem}. Theorem \ref{main theorem} is well-known when $S$ is orientable.
\medskip

Let us start with the discussion of the theorem. The set $\CG_\lambda$ is, by its very definition, mapping class group invariant and contains $\lambda$. This means that the inclusion $\overline{\Map(S)\cdot\lambda}\subset\CG_\lambda$ follows once we know that the latter set is closed. 

\begin{lem}\label{l closed}
For any $\lambda\in\CM\CL(S)$ the set $\CG_\lambda$ is closed in $\CM\CL(S)$. 
\end{lem}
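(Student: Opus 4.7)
The plan is to take a convergent sequence $(\eta_n) \subset \CG_\lambda$ with $\eta_n \to \eta$ and show $\eta \in \CG_\lambda$. By definition of $\CG_\lambda$, I would write each term as $\eta_n = \phi_n(\gamma_\lambda) + \phi_n(\mu_n)$ with $\phi_n \in \Map(S)$ and $\mu_n \in \CM\CL^+(R_\lambda)$; this is an honest sum in $\CM\CL(S)$ because $\supp(\mu_n) \subset R_\lambda$ is disjoint from $\gamma_\lambda$.

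The key reduction will be to prove that, after extraction, $\phi_n(\gamma_\lambda) = \gamma'$ and $\phi_n(R_\lambda) = R'$ are both eventually constant. To this end I would fix a finite collection $\CA$ of simple closed curves which is \emph{both} filling (via Lemma \ref{lem filling pair}, after enlarging a separating family) and separates measured laminations. Since $\iota(\eta_n,\alpha)$ converges in $\alpha\in \CA$, it is uniformly bounded; the disjointness of supports yields $\iota(\phi_n(\gamma_\lambda),\alpha) \le \iota(\eta_n,\alpha)$, so the weighted multicurves $\phi_n(\gamma_\lambda)$ have uniformly bounded intersection with $\CA$. Each component of $\phi_n(\gamma_\lambda)$ is an essential simple closed curve of a fixed topological type; the standard fact that cutting $S$ along a filling collection decomposes it into polygonal regions, so that essential curves with bounded intersection against $\CA$ are determined up to finitely many choices by the isotopy classes of their arcs in these polygons, then gives finiteness of the possible homotopy classes. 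Extracting a subsequence, $\phi_n(\gamma_\lambda) = \gamma'$ is constant; since $S\setminus\gamma'$ has only finitely many components, a further extraction makes $\phi_n(R_\lambda) = R'$ constant too.

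With $\gamma'$ and $R'$ fixed, the sequence $\phi_n(\mu_n) = \eta_n - \gamma'$ has intersections with $\CA$ that converge, hence it is $d_\CA$-bounded and (after a further extraction) converges to some $\nu \in \CM\CL(S)$ with $\eta = \gamma' + \nu$. To finish I would verify $\nu \in \CM\CL^+(R')$ by checking two closed conditions at the limit. For support in $R'$: continuity of intersection gives $\iota(\nu,\gamma') = 0$, so $\supp(\nu) \subset S\setminus\gamma'$, and for any essential curve $\alpha$ contained in a component of $S\setminus\gamma'$ other than $R'$ one has $\iota(\phi_n(\mu_n),\alpha)=0$ for every $n$, hence $\iota(\nu,\alpha)=0$, ruling out mass of $\nu$ outside $R'$. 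For the absence of one-sided closed leaves, Scharlemann's theorem says the set of measured laminations with a one-sided atom is open, so $\CM\CL^+$ is closed and $\nu\in\CM\CL^+$. Combining, $\eta = \gamma' + \nu \in \phi_{n_0}(\gamma_\lambda + \CM\CL^+(R_\lambda)) \subset \CG_\lambda$ for any $n_0$.

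The main obstacle will be the first reduction step. The bound on intersection numbers is automatic from convergence, but inferring finiteness of the homotopy classes of $\phi_n(\gamma_\lambda)$ from this bound is the point where one must invoke the polygonal-decomposition argument for curves with bounded intersection against a filling collection. Once this finiteness is in hand, the remainder of the proof is just continuity of intersection together with Scharlemann's closedness statement.
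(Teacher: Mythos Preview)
Your proposal is correct and follows essentially the same strategy as the paper: reduce to showing that the sequence of translated multicurves $\phi_n(\gamma_\lambda)$ takes only finitely many values, then conclude. The paper obtains this finiteness in one line by observing that convergence of $\eta_n$ bounds the hyperbolic length $\ell_S(\eta_n)$, and hence $\ell_S(\phi_n(\gamma_\lambda))\le\ell_S(\eta_n)$ is bounded, so the underlying topological multicurves (with fixed weights) lie in a finite set. Your route via bounded intersection with a filling family and the polygonal--decomposition argument reaches the same conclusion but is longer; the length function is the natural shortcut here.

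After the finiteness step the paper simply invokes the general principle that a convergent sequence contained in a finite union of closed sets has its limit in that union, having first noted that each translate $\phi(\gamma_\lambda+\CM\CL^+(R_\lambda))$ is closed (by Scharlemann together with the closedness of the inclusion $\CM\CL(R_\lambda)\hookrightarrow\CM\CL(S)$). You instead verify by hand that the limit $\nu$ lies in $\CM\CL^+(R')$. This is fine, but your sentence ``$\iota(\nu,\alpha)=0$ for all essential $\alpha$ in components of $S\setminus\gamma'$ other than $R'$, ruling out mass of $\nu$ outside $R'$'' needs a small patch: it is vacuous when such a complementary component is a pair of pants, and it does not by itself exclude $\nu$ acquiring an atom on a component of $\gamma'$. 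The clean fix is to note that $\CM\CL(R')\hookrightarrow\CM\CL(S)$ is a closed embedding (properness via $\ell_S$ plus the fact that curves in $R'$ separate $\CM\CL(R')$), so $\nu=\lim\phi_n(\mu_n)$ automatically lies in $\CM\CL(R')$; then Scharlemann gives $\nu\in\CM\CL^+(R')$.
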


\begin{proof}
First note that we get from Scharlemann's theorem \cite{Scharlemann} that the set $\CM\CL^+(R_\lambda)$ is closed in $\CM\CL(R_\lambda)$. It follows that also $\gamma_\lambda+\CM\CL^+(R_\lambda)$ is closed in $\CM\CL(S)$. This means that $\CG_\lambda$ is the union of closed sets. To prove that such a union is closed it suffices to show that any sequence $(\mu_n)\subset\CG_\lambda$ which converges in the ambient space $\CM\CL(S)$ is contained in finitely many of those closed sets, that is, in finitely many translates of $\gamma_\lambda+\CM\CL^+(R_\lambda)$. Letting thus $(\mu_n)$ be such a sequence and noting that there is a sequence of mapping classes $(\phi_n)\in\Map(S)$ with $\mu_n\in\phi_n(\gamma_\lambda+\CM\CL^+(R_\lambda))$ for all $n$, we get that $\mu_n=\gamma_n+\lambda_n$ where $\gamma_n=\phi_n(\gamma_\lambda)$ and where $\lambda_n\in\CM\CL^+(\phi_n(R_\lambda))$. 

The assumption that $\mu_n$ converges as $n\to\infty$ implies in particular that its length $\ell(\mu_n)$ is bounded. It follows that the length of $\gamma_n$ is also bounded, meaning that the multicurves $\gamma_n$ belong to a finite set. Once we know $\gamma_n$ there are only finitely many choices for $\phi_n(R_{\lambda})$ because it is a union of components of $S\setminus\gamma_n$. It follows that the sequence $(\mu_n)$ belongs to the union of finitely many translates of $\gamma_\lambda+\CM\CL^+(R_\lambda)$, as we needed to show.
\end{proof}

So far we have established that $\overline{\Map(S)\cdot\lambda}\subset\CG_\lambda$. The main step towards proving the opposite inclusion is to prove that it holds true if $\lambda$ has no atoms:

\begin{prop}\label{prop orbit closure ML no atoms}
If $S$ is a connected non-exceptional hyperbolic surface then we have $\CM\CL^+(S)\subset \overline{\Map(S)\cdot\lambda}$ whenever $\lambda\in\CM\CL(S)$ has no atoms.
\end{prop}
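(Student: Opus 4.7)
The strategy is to use Theorem~\ref{main theorem} to reduce the problem to approximating weighted two-sided multicurves, and then to build such approximations by post-composing a well-chosen ``preparation'' mapping class with a product of high powers of Dehn twists along the target multicurve.

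\textbf{Reduction.} By Theorem~\ref{main theorem}, together with Lemma~\ref{lem approximate multicurves by curves} and the continuity of scaling $(c,\alpha)\mapsto c\alpha$, the set of weighted two-sided simple multicurves (including the zero lamination, as a limit with scale tending to $0$) is dense in $\CM\CL^+(S)$. Hence, via a standard diagonal argument, it is enough to prove that every $\nu=\sum_{i=1}^r c_i\gamma_i\in\CM\CL^+(S)$ with $\gamma_1,\dots,\gamma_r$ pairwise disjoint two-sided simple curves and $c_i>0$ lies in $\overline{\Map(S)\cdot\lambda}$.

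\textbf{Approximation via Dehn twists.} For such a fixed $\nu$, I would look for mapping classes of the form
\[
\phi_k \;=\; D_{\gamma_1}^{n_1^k}\circ\cdots\circ D_{\gamma_r}^{n_r^k}\circ\psi_k,
\]
with $\psi_k\in\Map(S)$, integers $n_i^k\to+\infty$, and such that $n_i^k\,\iota(\psi_k(\lambda),\gamma_i)\to c_i$ for each $i$. By Ivanov's Lemma~\ref{lem ivanov} and the limit formula stated immediately after it, we would then obtain, for every simple closed curve $\beta$,
\[
\iota(\phi_k(\lambda),\beta)\;\longrightarrow\;\sum_{i=1}^r c_i\,\iota(\gamma_i,\beta)\;=\;\iota(\nu,\beta),
\]
and convergence on a separating family of curves of the type used to define $d_\CA$ would yield $\phi_k(\lambda)\to\nu$ in $\CM\CL(S)$.

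\textbf{Role of non-atomicity and construction of $\psi_k$.} The task thus reduces to constructing $\psi_k$ realizing prescribed small positive intersections with each $\gamma_i$. The hypothesis that $\lambda$ has no atoms enters in two ways. First, it guarantees that no $\gamma_i$ is a closed leaf of $\lambda$, so the function $\iota(\cdot,\gamma_i)$ can in principle be made arbitrarily small along the $\Map(S)$-orbit of $\lambda$ without vanishing. Second, it places $\lambda$ itself in $\CM\CL^+(S)$, so Theorem~\ref{main theorem} applies to the projective class of $\lambda$ and furnishes a supply of two-sided curves close to $\lambda$ projectively, which can serve as intermediate ``targets'' under which to conjugate. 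Concretely, I would compose pseudo-Anosov mapping classes (available by Lemma~\ref{lem existence PA}, since $S$ is non-exceptional), possibly supported on subsurfaces disjoint from individual $\gamma_j$, in order to shrink and independently adjust the quantities $\iota(\psi_k(\lambda),\gamma_i)$.

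\textbf{Main obstacle.} The hardest step will be the \emph{simultaneous} control of all $r$ intersection numbers with prescribed integer rescalings: one needs a single sequence $\psi_k$ realizing $r$ small positive intersection numbers with prescribed ratios, compatible with integer exponents $n_i^k\to\infty$ whose products $n_i^k\,\iota(\psi_k(\lambda),\gamma_i)$ converge to the prescribed weights $c_i$. Absent the Lindenstrauss--Mirzakhani classification of mapping class group invariant measures on $\CM\CL(S)$ (unavailable in the non-orientable setting, as the authors emphasize in the introduction), the construction will have to proceed by an inductive/diagonal procedure that alternates partial pseudo-Anosov iterations on carefully chosen subsurfaces with the final product of Dehn twists, leveraging non-atomicity at each step to guarantee that the relevant intersection numbers remain positive and can be further shrunk.
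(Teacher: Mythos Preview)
Your overall strategy---prepare $\lambda$ with a mapping class $\psi_k$ and then apply high powers of Dehn twists along the target curve(s)---is exactly the one the paper uses. However, there are two issues, one cosmetic and one substantive.

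\textbf{Unnecessary complication.} You reduce to approximating weighted two-sided \emph{multicurves} $\nu=\sum_{i=1}^r c_i\gamma_i$, and then worry about simultaneously controlling $r$ intersection numbers. But Theorem~\ref{main theorem} already asserts that the set of two-sided \emph{curves} is dense in $\CP\CM\CL^+(S)$, so weighted two-sided curves $c\cdot\gamma_0$ are dense in $\CM\CL^+(S)$. The paper reduces to this case with $r=1$, and your ``main obstacle'' (the simultaneous control of $r$ ratios) evaporates.

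\textbf{The real gap.} Even in the case $r=1$, your proposal does not actually construct the preparation maps $\psi_k$. You need $\psi_k(\lambda)$ to have small but \emph{positive} intersection with $\gamma_0$, and simultaneously small intersection with every curve in a separating family $\CA$ (otherwise the error term $\iota(\psi_k(\lambda),\beta)$ in Ivanov's estimate does not vanish). Your suggestion to ``compose pseudo-Anosov mapping classes, possibly supported on subsurfaces disjoint from individual $\gamma_j$'' does not achieve this: a pseudo-Anosov iteration pushes $\lambda$ projectively toward a fixed lamination, which controls ratios of intersection numbers but does not make them all small. The paper's construction is quite different and is where the hypothesis ``$\lambda$ has no atoms'' is genuinely used: since $\lambda$ has no closed leaves, it is carried by almost geodesic train tracks $\tau_n$ whose edges can be made arbitrarily long. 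By Lemma~\ref{lem finite train tracks} these $\tau_n$ lie in finitely many mapping class group orbits, so after passing to a subsequence there are $\phi_n\in\Map(S)$ with $\phi_n(\tau_n)=\tau_1$. The weights of $\lambda$ on $\tau_n$ are forced to be small (because the edges are long and the total length is fixed), so $\phi_n(\lambda)\in\CM\CL(\tau_1)$ has small length and hence small intersection with every curve in $\CA$. Positivity of $\iota(\phi_n(\lambda),\gamma_0)$ is arranged by first replacing $\lambda$ by a pseudo-Anosov translate so that $\lambda$ and $\gamma_0$ together fill. This train-track argument is the missing idea in your proposal.
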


It is in the proof of Proposition \ref{prop orbit closure ML no atoms} that Theorem \ref{main theorem} plays a key role. 

\begin{proof}
Since the set of weighted two-sided geodesics is dense in $\CM\CL^+(S)$ by Theorem \ref{main theorem}, it is enough to prove that any $c\cdot\gamma_0$ with $c>0$ and with $\gamma_0$ two-sided is a limit of translates of $\lambda$. Our first goal is to show that we can find translates of $\lambda$ which are very close to the trivial measured lamination. To be able to make sense of words like ``close" we fix a finite collection $\CA$ of curves separating measured laminations---we might assume without loss of generality that $\gamma_0\in\CA$---and, as in \eqref{distance CA}, let $d_\CA(\cdot,\cdot)$ be the induced distance on $\CM\CL(S)$.

\begin{claim}
For all $\epsilon>0$ there exists $\lambda_{\epsilon}\in\Map(S)\cdot \lambda$ with $\iota(\lambda_\epsilon, \gamma_0)>0$ and $d_\CA(\lambda_\epsilon,0)<\epsilon$.
\end{claim}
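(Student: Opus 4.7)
\textbf{The plan} is to produce a pseudo-Anosov $\phi\in\Map(S)$ whose unstable lamination $\lambda^+_\phi$ is projectively very close to $\lambda$, and then to apply a suitable power $\phi^{-N}$ that balances the expanding and contracting eigen-directions so that every intersection number $\iota(\phi^{-N}\lambda,\alpha)$ with $\alpha\in\CA$ is simultaneously tiny, while $\iota(\phi^{-N}\lambda,\gamma_0)$ remains positive.

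\textbf{Constructing $\phi$.} Since $\lambda$ has no atoms, $\lambda\in\CM\CL^+(S)$ and Theorem \ref{main theorem} furnishes two-sided simple closed curves projectively arbitrarily close to $[\lambda]$. Using Lemma \ref{lem filling pair} and a mild perturbation, I would choose a filling pair $\beta_1,\beta_2$ of two-sided curves with both $[\beta_i]$ close to $[\lambda]$. For $m$ large the composition $\phi:=D_{\beta_1}^{m}D_{\beta_2}^{m}$ is pseudo-Anosov (by the construction recalled before Lemma \ref{lem existence PA}); let $\lambda^\pm_\phi$ be its stable and unstable laminations, normalised so that $\ell_S(\lambda^\pm_\phi)=\ell_S(\lambda)$, and let $k>1$ be its stretch factor. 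Because $\lambda^+_\phi$ is a projective limit of iterates $\phi^j(\beta_1)$ built from Dehn twists around $\beta_1$ and $\beta_2$, its projective class lies close to the projective span of $\{\beta_1,\beta_2\}$, hence close to $[\lambda]$. Continuity of the intersection form together with $\iota(\lambda,\lambda)=0$ then makes $\delta:=\iota(\lambda,\lambda^+_\phi)$ arbitrarily small as we shrink the approximation.

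\textbf{Balancing the iterates.} Thurston's north–south dynamics on $\CM\CL(S)$ yields, for every $\alpha\in\CA$ and $N$ large,
\[
\iota(\phi^{-N}\lambda,\alpha) \;=\; k^{N}\,\frac{\iota(\lambda,\lambda^+_\phi)\,\iota(\lambda^-_\phi,\alpha)}{\iota(\lambda^+_\phi,\lambda^-_\phi)}\;+\;k^{-N}\,\frac{\iota(\lambda,\lambda^-_\phi)\,\iota(\lambda^+_\phi,\alpha)}{\iota(\lambda^+_\phi,\lambda^-_\phi)}\;+\;o(1).
\]
Setting $M:=\max_{\alpha\in\CA}\bigl(\iota(\lambda^-_\phi,\alpha)+\iota(\lambda,\lambda^-_\phi)\,\iota(\lambda^+_\phi,\alpha)\bigr)/\iota(\lambda^+_\phi,\lambda^-_\phi)$, the right-hand side is bounded by $M\bigl(k^{N}\delta+k^{-N}\bigr)+o(1)$. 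Choosing $N\sim\tfrac{1}{2\log k}\log(1/\delta)$ balances the two exponential terms and gives a bound of order $M\sqrt{\delta}$; taking $\delta$ small enough (depending on $\epsilon$, $M$, and the finitely many numbers $\iota(\lambda^\pm_\phi,\alpha)$) yields $\iota(\phi^{-N}\lambda,\alpha)<\epsilon$ for every $\alpha\in\CA$. The required positivity $\iota(\phi^{-N}\lambda,\gamma_0)>0$ follows at once: the leading term has coefficient $\iota(\lambda,\lambda^+_\phi)>0$ (since we choose $\lambda^+_\phi$ not projectively equal to $\lambda$) and $\lambda^-_\phi$ is filling, so $\iota(\lambda^-_\phi,\gamma_0)>0$. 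Set $\lambda_\epsilon:=\phi^{-N}\lambda$.

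\textbf{Main obstacle.} The delicate step is the continuous dependence in the construction of $\phi$: rigorously justifying that the unstable lamination of $D_{\beta_1}^{m}D_{\beta_2}^{m}$ converges projectively to $[\lambda]$ as $[\beta_i]\to[\lambda]$ (and $m\to\infty$), so that $\iota(\lambda,\lambda^+_\phi)$ can indeed be made as small as we wish. In the orientable setting this is classical, via the Perron–Frobenius analysis of the matrix action on the relevant train-track charts; for non-orientable surfaces one can either work in the orientation double cover and push the estimate down, or argue directly using the uniform train tracks of Proposition \ref{prop-useful traintrack} to track the weight vector of $\phi^j(\beta_1)$ on a train track carrying $\lambda$.
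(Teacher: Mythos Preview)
Your strategy has a genuine gap at the step you call ``north--south dynamics.'' The displayed two-term asymptotic
\[
\iota(\phi^{-N}\lambda,\alpha)=k^{N}\,\frac{\iota(\lambda,\lambda^+_\phi)\,\iota(\lambda^-_\phi,\alpha)}{\iota(\lambda^+_\phi,\lambda^-_\phi)}+k^{-N}\,\frac{\iota(\lambda,\lambda^-_\phi)\,\iota(\lambda^+_\phi,\alpha)}{\iota(\lambda^+_\phi,\lambda^-_\phi)}+o(1)
\]
is not a theorem. What north--south dynamics actually gives is only the first-order statement $k^{-N}\iota(\phi^{-N}\lambda,\alpha)\to\iota(\lambda,\lambda^+_\phi)\,\iota(\lambda^-_\phi,\alpha)/\iota(\lambda^+_\phi,\lambda^-_\phi)$; the error in that convergence is governed by the subdominant eigenvalues of the transition matrix of $\phi$ on a carrying train track, and there is no reason these should equal $k^{-1}$ (they generally do not, except in the once-punctured torus case where $\CM\CL$ is effectively two-dimensional). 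Without control on that error your balancing argument collapses: you need the remainder to be small at the specific finite $N$ you pick, and nothing you have written bounds it. The $o(1)$ as written is also ill-posed, since the asymptotic is for $N\to\infty$ with $\phi$ fixed while your $N$ is a fixed number chosen as a function of $\delta$.

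There is a second gap you yourself flag but do not close: producing a filling two-sided pair $\beta_1,\beta_2$ with both $[\beta_i]$ projectively close to $[\lambda]$, and then showing that the unstable lamination of $D_{\beta_1}^mD_{\beta_2}^m$ tends to $[\lambda]$. If $\lambda$ is not filling---and nothing in the hypotheses of the claim rules this out---curves projectively close to $[\lambda]$ are nearly supported in a proper subsurface and will typically fail to fill $S$ together, so the construction of $\phi$ may not even get off the ground.

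For comparison, the paper's argument is short and avoids all spectral considerations. One first replaces $\lambda$ by a pseudo-Anosov translate so that $\lambda$ and $\gamma_0$ fill. Then one takes almost geodesic train tracks $\tau_n$ carrying $\lambda$ with every edge of length at least $L_n\to\infty$ (possible because $\lambda$ has no closed leaves), invokes Lemma~\ref{lem finite train tracks} to find $\phi_n\in\Map(S)$ with $\phi_n(\tau_n)=\tau_1$, and reads off directly from \eqref{eq combinatorial length} that $\ell(\phi_n(\lambda))\le 2\,\ell(\lambda)\,\ell(\tau_1)/L_n\to 0$. The filling hypothesis guarantees $\iota(\phi_n(\lambda),\gamma_0)>0$ since every lamination carried by $\tau_1$ meets $\gamma_0$.
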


\begin{proof}
Note that there is some $\delta(\epsilon)>0$ such that if $\mu\in\CM\CL(S)$ has length $\ell(\mu)<\delta(\epsilon)$, then
$$d_\CA(\mu,0)=\max_{\eta\in\CA}\iota(\mu,\eta)<\epsilon$$
As we see, it suffices to guarantee is that there is a $\lambda_\epsilon\in\Map(S)\cdot\lambda$ with $\iota(\lambda_\epsilon,\gamma_0)>0$ and $\ell(\lambda_\epsilon)<\delta(\epsilon)$. This is what we will do.

To begin with, note that, up to replacing $\lambda$ by a translate by a pseudo-Anosov mapping class, we can assume that $\lambda$ and $\gamma_0$ together fill the surface. Now, take a sequence $L_n\to\infty$ and for each $n$ an almost geodesic train track $\tau_n$ carrying $\lambda$ and whose edges have all at least length $L_n$---such a train track exists because $\lambda$ has no closed leaves. Since $\lambda$ and $\gamma_0$ fill, we can forget some of the members of our sequence $(\tau_n)$ and ensure that $\iota(\gamma_0,\mu)>0$ for every $\mu\in\CM\CL(\tau_n)$ and every $n$. Also, since there are only finitely many mapping class group orbits of train tracks by Lemma \ref{lem finite train tracks}, we might pass to a subsequence such that for all $n$ there is $\phi_n\in\Map(S)$ with $\phi_n(\tau_n)=\tau_1$.

Set $\lambda_n=\phi_n(\lambda)\in\CM\CL(\tau_1)$. Since $\tau_n$ is almost geodesic we have
$$L_n\cdot\omega_\lambda(e)\le 2\cdot\ell(\lambda)$$
for each edge $e\in E(\tau_n)$. This implies in turn that
\begin{align*}
\ell(\lambda_n)
&\le\sum_{e\in E(\tau_1)}\omega_{\lambda_n}(e)\cdot\ell(e)=\sum_{e\in E(\tau_1)}\omega_{\lambda}(\phi^{-1}(e))\cdot\ell(e)\\
&\le\left(\frac{2\cdot\ell(\lambda)}{L_n}\right)\cdot\ell(\tau_1)
\end{align*}.
We just have to choose $n$ with $L_n=\delta(\epsilon)^{-1}\cdot 2\cdot\ell(\lambda)\cdot\ell(\tau_1)$.
\end{proof}

Now, let $D=D_{\gamma_0}$ be a Dehn twist along $\gamma_0$. For each $\epsilon>0$ we will prove that for there exists $n_\epsilon$ so that $D^{n_\epsilon}(\lambda_\epsilon)$ converges to $c\cdot\gamma_0$ as $\epsilon$ tends to $0$, where $\lambda_\epsilon\in\Map(S)\cdot\lambda$ is provided by the claim.  Since $\CA$ is a collection of curves separating measured laminations we get that in order to make sure that $D^{n_\epsilon}(\lambda_\epsilon)$ converges to $c\cdot\gamma_0$ we just have to make sure that
\begin{equation}\label{eq I want my computer}
\lim_{\epsilon\to 0}\iota(D^{n_\epsilon}\lambda_\epsilon,\eta)=\iota(c\cdot\gamma_0,\eta)
\end{equation}
for all $\eta\in\CA$. From Lemma \ref{lem ivanov} we get that for any such $\eta\in\CA$
\begin{align*}
\iota(D^{n_\epsilon}\lambda_\epsilon,\eta)&\le n_\epsilon\cdot\iota(\lambda_\epsilon,\gamma_0)\cdot\iota(\gamma_0,\eta)+\iota(\lambda_\epsilon,\eta)\\
\iota(D^{n_\epsilon}\lambda_\epsilon,\eta)&\ge (n_\epsilon-2)\cdot\iota(\lambda_\epsilon,\gamma_0)\cdot\iota(\gamma_0,\eta)-\iota(\lambda_\epsilon,\eta)
\end{align*}.
Since $\iota(\lambda_\epsilon,\gamma_0)\neq0$ tends to $0$ when $\epsilon$ goes to $0$, we can choose $n_\epsilon\to\infty$ with 
$$\lim_{\epsilon\to 0}n_\epsilon\cdot\iota(\lambda_\epsilon,\gamma_0)=1.$$
Combining the two inequalities above, and taking into account that $\iota(\lambda_\epsilon,\eta)<\epsilon$, we get \eqref{eq I want my computer}. This concludes the proof of the proposition.
\end{proof}

We are now ready to conclude the proof of the theorem:

\begin{proof}[Proof of Theorem \ref{thm orbit closures ML}]
Since $\Map(S)\cdot\lambda\subset \CG_\lambda$, and since the latter is closed by Lemma \ref{l closed}, we have $\overline{\Map(S)\cdot\lambda} \subset \CG_\lambda$. 

For the reverse inclusion, let $\lambda=\gamma_\lambda + \lambda'$ be the unique decomposition of $\lambda$, where $\lambda'\in\CM\CL(S)$ has no atoms. Let $R_i$ be one of the connected components of $R_{\lambda}$, and let $\lambda_i'$ be a component of $\lambda'$ supported in $R_i$. As we pointed out earlier, $R_i$ is non-exceptional. By Proposition \ref{prop orbit closure ML no atoms} we have that $\CM\CL^+(R_i)\subset\overline{\Map(R_i)\cdot\lambda_i'}$. It follows thus that $\gamma_\lambda+\mu\in \overline{\Map(S)\cdot\lambda}$ for any $\mu\in\CM\CL^+(R_\lambda)$. In other words we have 
$$\gamma_\lambda+\CM\CL^+(R_\lambda)\subset\overline{\Map(S)\cdot\lambda}.$$
Since the right side is mapping class group invariant it follows that 
$$\CG_\lambda=\Map(S)\cdot(\gamma_\lambda+\CM\CL^+(R_\lambda))\subset\overline{\Map(S)\cdot\lambda},$$
as we needed to prove. 
\end{proof}

\section{Orbit closures in $\CP\CM\CL(S)$}\label{sec:orbit closures PML} 

We come now to the proofs of Theorem \ref{thm minimal set} and Theorem \ref{thm orbit closures PML}. We start by the latter:

\begin{named}{Theorem \ref{thm orbit closures PML}}
    Let $S$ be a connected, possibly non-orientable, non-exceptional hyperbolic surface of finite topological type. We have $\overline{\Map(S)\cdot\lambda}=\CP\CG_\lambda\cup\CP\CM\CL^+(S)$ for any projective measured lamination $\lambda\in\CP\CM\CL(S)$.
\end{named}

Theorem \ref{thm orbit closures PML} will follow easily once we know that the set on the right is closed. This is what we prove next:

\begin{lem}\label{lem really bad notation}
The set $\CP\CG_\lambda\cup\CP\CM\CL^+(S)$ is closed in $\CP\CM\CL(S)$ for any $\lambda\in\CP\CM\CL(S)$.
\end{lem}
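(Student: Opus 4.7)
The plan is to mimic the proof of Lemma \ref{l closed} with one extra ingredient: because we projectivize, the representatives in $\CG_\lambda$ need to be rescaled, and one has to track whether the atomic and non-atomic parts survive in the projective limit. Let $[\mu_n]\to[\mu]$ with $[\mu_n]\in\CP\CG_\lambda\cup\CP\CM\CL^+(S)$. Scharlemann's theorem gives that $\CM\CL^+(S)$ is closed in $\CM\CL(S)$, hence $\CP\CM\CL^+(S)$ is closed in $\CP\CM\CL(S)$, so I may pass to a subsequence and assume $[\mu_n]\in\CP\CG_\lambda$ for all $n$. Pick representatives $\hat\mu_n=\phi_n(\gamma_\lambda)+\nu_n\in\CG_\lambda$ with $\nu_n\in\CM\CL^+(\phi_n(R_\lambda))$, rescale them to unit length, and extract a subsequential limit $\mu$ of the unit-length representatives $\mu_n=c_n\hat\mu_n$, where $c_n=\ell_S(\hat\mu_n)^{-1}$. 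I then decompose
$$\mu_n=\alpha_n+\beta_n,\quad \alpha_n:=c_n\phi_n(\gamma_\lambda),\ \beta_n:=c_n\nu_n\in\CM\CL^+(\phi_n(R_\lambda)),$$
and pass to a further subsequence so that $\alpha_n\to\alpha$ and $\beta_n\to\beta$ with $\mu=\alpha+\beta$.

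From here I would split into two cases. If $\alpha=0$, then $\mu=\lim\beta_n$ is a limit of elements of the closed set $\CM\CL^+(S)$, so $[\mu]\in\CP\CM\CL^+(S)$ and we are done. If $\alpha\neq 0$, the positive-systole lower bound on $\ell_S(\phi_n(\gamma_\lambda))$ combined with $\ell_S(\alpha_n)\to\ell_S(\alpha)>0$ forces the scalars $c_n$ to be bounded above; the question is whether $c_n$ can still collapse to $0$. If $c_n$ stays bounded below as well, then $\phi_n(\gamma_\lambda)$ has bounded length, hence lies in a finite set of multicurves; passing to subsequences I may assume $\phi_n(\gamma_\lambda)=\gamma$ is constant and, since $\phi_n(R_\lambda)$ is then a union of components of $S\setminus\gamma$ (of which there are only finitely many), that $\phi_n(R_\lambda)=R$ is constant too. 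Closedness of $\CM\CL^+(R)$ gives $\beta\in\CM\CL^+(R)$, and $c_n\to c>0$, so
$$\mu=c\gamma+\beta=c(\gamma+\beta/c),$$
and $[\mu]=[\gamma+\beta/c]\in\CP\CG_\lambda$ because $\gamma+\beta/c\in\phi_{n_0}(\gamma_\lambda+\CM\CL^+(R_\lambda))$ for any fixed $n_0$.

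The delicate remaining subcase is $c_n\to 0$: here the atomic part $\alpha_n$ still survives projectively while its individual weights degenerate and the underlying multicurves get longer and longer. I plan to handle this by invoking Scharlemann's theorem a second time, now using the \emph{stability} of one-sided atoms and the continuity of their weights (the remark following Scharlemann's theorem). A one-sided atom of $\mu$ would have to arise as a one-sided atom of $\mu_n$ for all sufficiently large $n$; since $\beta_n\in\CM\CL^+$ contributes none, such an atom would have to be a fixed one-sided curve that is a component of $\phi_n(\gamma_\lambda)$ for large $n$, with weight $c_nc_i$, and continuity would force this weight to tend to a positive limit---contradicting $c_n\to0$. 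Hence $\mu$ has no one-sided atoms, $\mu\in\CM\CL^+(S)$, and $[\mu]\in\CP\CM\CL^+(S)$.

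The main obstacle is precisely this last subcase: projective limits where the atomic data persists in the projective class but its absolute weights disappear. All the geometric work is being done by Scharlemann's theorem, both in the form used at the outset (closedness of $\CM\CL^+(S)$) and in the sharper form just invoked (continuous dependence of the weight of a one-sided atom), which together prevent the limit from escaping $\CP\CG_\lambda\cup\CP\CM\CL^+(S)$.
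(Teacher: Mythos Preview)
Your argument is correct and follows essentially the same route as the paper: the paper phrases the dichotomy as ``$(\mu_n)$ converges in $\CM\CL$'' (invoking Lemma~\ref{l closed} directly) versus ``only converges after rescaling by $\epsilon_n\to 0$'' (invoking Scharlemann's stability of one-sided atoms), which matches your split into $c_n$ bounded below versus $c_n\to 0$. One small slip: your stated reason that $c_n$ is bounded above (``$\ell_S(\alpha_n)\to\ell_S(\alpha)>0$'' together with the systole bound) gives a \emph{lower} bound on $\ell_S(\alpha_n)$, not the needed upper bound---but the conclusion is immediate anyway from $\ell_S(\alpha_n)\le\ell_S(\mu_n)=1$.
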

\begin{proof}
Using for once square brackets to indicate projective classes, suppose that we have a sequence $([\mu_n])_n$ in $\CP\CG_\lambda\cup\CP\CM\CL^+(S)$ which converges to some $[\mu]\in\CP\CM\CL(S)$. We claim that $[\mu]$ also belongs to $\CP\CG_\lambda\cup\CP\CM\CL^+(S)$. Since $\CP\CM\CL^+(S)$ is closed, we might assume without loss of generality that our sequence is actually contained in $\CP\CG_\lambda$. This means that it is represented by a sequence $(\mu_n)\subset\CG_\lambda\subset\CM\CL(S)$. If the sequence $(\mu_n)$ converges in $\CM\CL(S)$ then we are done because $\CG_\lambda$ is closed by Lemma \ref{l closed}. We might thus assume that $\mu_n$ only converges projectively to some representant $\mu\in\CM\CL(S)$ of the class $[\mu]$, meaning that there is a sequence $\epsilon_n\to 0$ with 
$$\mu=\lim_n\epsilon_n\cdot\mu_n$$.
We claim that $\mu\in\CM\CL^+(S)$. Otherwise $\mu$ would have a one-sided atom $c\cdot\gamma$. We would then get from Scharlemann's theorem that, for all large $i$, the geodesic $\gamma$ is the support of an atom of $\epsilon_\cdot\mu_n$. In fact, the weight $c_n$ in $\epsilon_n\cdot\mu_n$ of this atom converges to $c$ and hence stays bounded from below. Since there are $\phi_n\in\Map(S)$ and $\nu_n\in\CM\CL^+(\phi_n(R_\lambda))$ with 
$$\epsilon_n\cdot\mu_n=\epsilon_n\cdot\phi_n(\gamma_\lambda)+\nu_n$$
we get that our atom $c_n\cdot\gamma$, having relatively large weight, must be an atom of $\nu_n$. This contradicts the assumption that $\nu_n$ has no one-sided leaves. We are done.
\end{proof}

We are now ready to prove the theorem:

\begin{proof}[Proof of Theorem \ref{thm orbit closures PML}]
Since $\Map(S)\cdot\lambda$ is contained in $\CP\CG_\lambda$ we get from Lemma \ref{lem really bad notation} the inclusion $\overline{\Map(S)\cdot\lambda}\subset\CG_\lambda\cup\CP\CM\CL^+(S)$. To prove the opposite inclusion note that we get from Theorem \ref{thm orbit closures ML} that $\CP\CG_{\lambda}\subset \overline{\Map(S)\cdot\lambda}$. Also, the argument used in the proof of Proposition \ref{prop orbit closure ML no atoms} implies that the projective class of every two-sided curve belongs to $\overline{\Map(S)\cdot\lambda}$. Since two-sided curves are dense in $\CP\CM\CL^+(S)$ by Theorem \ref{main theorem} we get that $\CP\CG_\lambda\cup\CP\CM\CL^+(S)\subset\overline{\Map(S)\cdot\lambda}$. We are done.
\end{proof}

Note that Theorem \ref{thm orbit closures PML} implies that $\CP\CM\CL^+(S)$ is contained in the closure of any orbit of the action $\Map(S)\actson\CP\CM\CL(S)$. Theorem \ref{thm minimal set} follows immediately:

\begin{named}{Theorem \ref{thm minimal set}}
  Let $S$ be a connected, possibly non-orientable, non-exceptional hyperbolic surface of finite topological type. The set $\CP\CM\CL^+(S)$ is the unique non-empty closed subset of $\CP\CM\CL(S)$ which is invariant and minimal under the action of $\Map(S)$.\qed
\end{named}

On the other hand, we note that $\CP\CM\CL(S)$, for $S$ non-orientable, has a dense orbit if and only if $S$ is of genus 1. Recall that the {\em genus} of the surface is defined to be the maximum number of disjoint simple curves one can cut along without disconnecting the surface.

\begin{kor}
Suppose $S$ is non-orientable. If $S$ has genus $k=1$, then $\CP\CM\CL(S)$ has a dense $\Map(S)$-orbit. If $S$ has genus $k>1$ then $\CP\CM\CL(S)$ is {\em not} the closure of any countable union of $\Map(S)$-orbits. 
\end{kor}

\begin{proof}
For $k=1$, let $\gamma$ be a one-sided curve and set $\lambda = \gamma +\lambda'$ for some $\lambda'\in\CM\CL(S\setminus\gamma)$. Note that every one-sided curve on $S$ is in the $\Map(S)$-orbit of $\gamma$, that $S\setminus\gamma$ is orientable, and that $R_{\lambda} = S\setminus\gamma$. It follows that $\CP\CG_{\lambda} = \CP\CM\CL(S)\setminus\CP\CM\CL^+(S)$ and hence that 
$$\overline{\Map(S)\cdot[\lambda]} = \CP\CG^{\lambda}\cup\CP\CM\CL^+(S) = \CP\CM\CL(S).$$

For the second statement, note that when the genus is at least $2$ then there are uncountably many projective classes of orbits of one-sided multicurves: for any 2 disjoint one-sided curves $\alpha, \beta$, the classes $[\alpha+t\beta]$ are distinct and not in the same orbit for distinct $t>0$. On the other hand, any countable union $\cup_{n\in\BN}\CP\CG^{\lambda_n}$ only contains countably many orbits of one-sided multicurves.
\end{proof}

\section{Orbit closures in Teichm\"uller space}\label{sec:orbit closures Tecih}

There are many reasons why measured laminations and projective measured laminations play a key role when studying hyperbolic surfaces, but if one were to have to choose one then one would probably mention Thurston's compactification of Teichm\"uller space \cite{Thurston-bull}. Indeed, Thurston defined a mapping class group invariant topology on
$$\overline\CT(S)=\CT(S)\cup\CP\CM\CL(S)$$
with respect to which the inclusions $\CT(S)\hookrightarrow\overline\CT(S)$ and $\CP\CM\CL(S)\hookrightarrow\overline\CT(S)$ are homeomorphisms onto their images, and where a sequence $(X_i)\subset\CT(S)$ converges to a $[\lambda]\in\CP\CM\CL(S)$ if for some, and hence any representative $\lambda\in\CM\CL(S)$ of the class $[\lambda]$, there is a sequence $(\epsilon_i)$ of positive real numbers with
$$\lim_{i\to\infty}\epsilon_i\cdot\ell_{X_i}(\gamma)=\iota(\lambda,\gamma)$$
for every essential simple closed curve $\gamma$. The space $\overline\CT(S)$ is compact when endowed with this topology---it is in fact homeomorphic to a ball, but we will not need that fact. We refer to \cite{FLP} for a discussion of Thurston's compactification in the orientable case, to \cite{Papa-Penner} for one in the non-orientable case, and to \cite{Bonahon88} for one that works either way. 

In this section we study where do orbits of the action $\Map(S)\actson\CT(S)$ accumulate in the Thurston boundary $\D\CT(S)=\CP\CM\CL(S)$.

\begin{named}{Theorem \ref{thm limit set}}
  Let $S$ be a connected, possibly non-orientable, non-exceptional hyperbolic surface of finite topological type. We have $\overline{\Map(S)\cdot X}\cap\D\CT(S)=\CP\CM\CL^+(S)$ for any point $X$ in Teichm\"uller space $\CT(S)$.
\end{named}

In the course of the proof of the theorem we will need the following fact:

\begin{lem}\label{lemma support}
Suppose $\lambda, \lambda'\in\CM\CL(S)$ are such that $\iota(\lambda, \alpha)\leq\iota(\lambda', \alpha)$ for all simple closed curves $\alpha$. Then $\supp(\lambda)\subset \supp(\lambda')$.
\end{lem}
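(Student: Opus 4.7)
The plan is to argue by contrapositive: supposing $\supp(\lambda)\not\subset\supp(\lambda')$, I would produce a simple closed curve $\alpha$ with $\iota(\lambda,\alpha)>\iota(\lambda',\alpha)$.

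First, I would promote the hypothesis from simple closed curves to all measured laminations: since weighted simple closed curves are dense in $\CM\CL(S)$ and the intersection form is continuous, the inequality $\iota(\lambda,\mu)\le\iota(\lambda',\mu)$ holds for every $\mu\in\CM\CL(S)$. Taking $\mu=\lambda'$ and using that measured laminations have vanishing self-intersection, this gives $\iota(\lambda,\lambda')=0$. Hence $\supp(\lambda)$ and $\supp(\lambda')$ have no transverse crossings, so $\supp(\lambda)\cup\supp(\lambda')$ is itself a geodesic lamination and every leaf of $\supp(\lambda)$ is either contained in $\supp(\lambda')$ or disjoint from it. Consequently there exists a minimal sublamination $\mu\subset\supp(\lambda)$ entirely disjoint from $\supp(\lambda')$. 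Let $V$ denote the connected component of $S\setminus\supp(\lambda')$ containing $\mu$. My goal becomes to find an essential simple closed curve $\alpha\subset V$ with $\iota(\alpha,\mu)>0$; such an $\alpha$ gives $\iota(\alpha,\lambda')=0$ while $\iota(\alpha,\lambda)\ge c\cdot\iota(\alpha,\mu)>0$, where $c$ is the mass that $\lambda$ assigns to $\mu$, yielding the desired contradiction.

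When $\mu$ is a simple closed geodesic $\gamma$, I would show that $V$ cannot be a regular neighborhood of $\gamma$. An annular neighborhood of $\gamma$ would have boundary components in $\supp(\lambda')$ that are simple closed geodesics isotopic to $\gamma$, hence equal to $\gamma$ as geodesics, contradicting $\gamma\notin\supp(\lambda')$; a M\"obius neighborhood of $\gamma$ would have boundary freely homotopic to $\gamma^2$, whose geodesic realisation is not simple and therefore cannot be a leaf of the geodesic lamination $\supp(\lambda')$. With $V$ strictly larger than any such regular neighborhood, $\gamma$ is essential and non-boundary-parallel in $V$, and standard surface topology then yields an essential simple closed curve $\alpha\subset V$ with $\iota(\alpha,\gamma)>0$. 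When instead $\mu$ is an uncountable minimal lamination, I would let $V_0\subset V$ denote the subsurface filled by $\mu$; since $\mu$ has no closed leaves, $V_0$ has negative Euler characteristic and contains an essential simple closed curve $\alpha$, and by definition of filling any such $\alpha$ satisfies $\iota(\alpha,\mu)>0$.

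The main obstacle I anticipate is the regular-neighborhood analysis in the closed-leaf case, since it is there that the assumption that leaves of $\supp(\lambda')$ are honest simple closed geodesics (not merely essential curves) really bites; once this is in place, the rest follows from classical facts about geodesic laminations and the density of weighted simple curves.
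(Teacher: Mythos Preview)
Your opening move---promoting the inequality to all of $\CM\CL(S)$ by density and continuity and deducing $\iota(\lambda,\lambda')=0$---is correct and is essentially what the paper does. The treatment of the case where the minimal component $\mu$ has no closed leaves is also fine.

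The gap is in the closed-leaf case. Your assertion that if $V$ is an annulus then its boundary circles are simple closed geodesics (hence equal to $\gamma$) is false: the frontier of a complementary region of $\supp(\lambda')$ need not consist of closed leaves. When the adjacent components of $\lambda'$ are minimal without closed leaves, each boundary circle of $V$ is a \emph{crown}---a topological circle made of arcs in non-compact leaves of $\lambda'$ meeting at cusps. For a concrete instance, take $\lambda'$ to be the union of two minimal laminations filling the two once-holed tori on either side of a separating curve $\gamma$ in a genus-two surface; the complementary region of $\supp(\lambda')$ containing $\gamma$ is then an annulus with crowned ends and $\gamma$ is its core. In this situation every essential simple closed curve $\alpha\subset V$ is isotopic to $\gamma$, so $\iota(\alpha,\gamma)=0$ and your construction produces nothing. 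The same obstruction arises whenever $\gamma$ is parallel to a crowned boundary component of $V$, even if $V$ is larger than an annulus; so your inference ``$V$ strictly larger $\Rightarrow$ $\gamma$ non-boundary-parallel in $V$'' also fails.

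This is exactly the case the paper flags as ``the most interesting and hardest,'' and its resolution abandons the search for a curve inside $V$. One instead lets the test curve enter a thin neighborhood $U$ of $\supp(\lambda')$: an arc crossing $\gamma$ once is closed up by two simple curves running inside $\overline U$, each contributing intersection less than $1$ with $\lambda'$, giving a curve with $\iota(\cdot,\lambda)=2$ but $\iota(\cdot,\lambda')<2$.
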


\begin{proof}
First note that we might assume without loss of generality that $\lambda$ has connected support, meaning that there are a priori three options: either $\iota(\lambda,\lambda')>0$, or $\supp(\lambda)\cap\supp(\lambda')=\emptyset$, or $\supp(\lambda)\subset \supp(\lambda')$. To rule the first one out, let $(\gamma_i)$ be a sequence of weighted curves which converges to $\lambda'$ in $\CM\CL(S)$. By the continuity of the intersection form we have that $\lim_i\iota(\lambda',\gamma_i)=0$ while $\lim_i\iota(\lambda,\gamma_i)=\iota(\lambda,\lambda')>0$. This contradicts our assumption.

The discussion of the second case is less clean. Suppose first that the support of $\lambda$ is not a simple curve. This implies that every open neighborhood $U$ of $\supp(\lambda)$ contains a simple curve $\gamma$ with $\iota(\lambda,\gamma)>0$. If $\supp(\lambda)\cap\supp(\lambda')=\emptyset$ then we can choose the neighborhood $U$ disjoint from $\lambda'$, meaning that $\iota(\lambda',\gamma)=0$. This contradicts our assumption. It remains to consider the case when the support of $\lambda$ is a simple curve---for the sake of concreteness we assume that the weight is one. Since it is the most interesting and hardest case we suppose that the curve $\lambda$ is contained in a complementary region of $\supp(\lambda')$ homeomorphic to an annulus with cusps, leaving the other cases to the reader. Let $U$ be an open small neighborhood of $\lambda'$ and $\overline U$ its closure. Let also $[x,y]\subset S\setminus U$ be a simple arc which intersects $\supp(\lambda)$ once, and with $x,y\in\overline U$. We can now find two disjoint simple curves $\gamma_x,\gamma_y\subset\overline U$ with $x\in\gamma_x$ and $y\in\gamma_y$, and with $\iota(\lambda,\gamma_x)<1$ and $\iota(\lambda,\gamma_y)<1$. Let $\gamma$ be the simple curve obtained, up to homotopy, as the juxtaposition of 2 copies of $[x,y]$ and the curves $\gamma_x$ and $\gamma_y$. We have $\iota(\gamma,\lambda)=2$ while $\iota(\gamma,\lambda')<2$. This contradicts once again our assumption.

Having ruled out the first and second possibilities we get that $\supp(\lambda)\subset \supp(\lambda')$, as we needed to prove.
\end{proof}

We are now ready to prove the theorem.

\begin{proof}[Proof of Theorem \ref{thm limit set}]
Note that  $\CP\CM\CL^+(S)\subset \overline{\Map(S)\cdot X}\cap\D\CT(S)$ by minimality of $\CP\CM\CL^+(S)$, Theorem \ref{thm minimal set}. 

Suppose now that $[\lambda]\in\CP\CM\CL(S)$ is an accumulation point of $\Map(S)\cdot X$, say $X_n=\phi_n(X)$ with $\phi_n$ pairwise distinct. Let $\alpha$ and $\beta$ be two-sided simple closed curves which together fill $S$. Consider the function 
$$I: \CM\CL(S)\to\mathbb{R}$$
defined by 
$$\lambda \mapsto \frac{\ell_{X}(\lambda)}{\iota(\alpha, \lambda)+\iota(\beta, \lambda)}.$$
Note that $I$ is continuous, non-zero, and descends to a function on the compact space $\CP\CM\CL(S)$ and is hence bounded from above and below by positive constants. In particular, there exists $K>1$ such that 
$$\frac{1}{K}\leq \frac{\ell_{X}(\gamma)}{\iota(\alpha, \gamma)+\iota(\beta, \gamma)}\leq K$$
for all simple closed curves $\gamma$. It follows that 
\begin{equation}\label{ratio}
\frac{1}{K}\leq \frac{\ell_{\phi_n(X)}(\gamma)}{\iota(\phi_n(\alpha), \gamma)+\iota(\phi_n(\beta), \gamma)}\leq K
\end{equation}
for all $n$ and all simple closed curves $\gamma$. 

Note now that, up to reversing the roles of $\alpha$ and $\beta$ and passing to a subsequence, we can assume that $\ell_X(\phi_n(\alpha))\ge\ell_X(\phi_n(\beta))$ for all $n$. This means that if we set $\epsilon_n=\frac 1{\ell_X(\phi_n(\alpha))}$ then we have that both the sequences $\epsilon_n\cdot\phi_n(\alpha)$ and $\epsilon_n\cdot\phi_n(\beta)$ are bounded in $\CM\CL(S)$. This means that, up to possibly passing to a further subsequence, we can assume that the limits
$$\lim_{n\to\infty} \epsilon_n\cdot\phi_n(\alpha)=\mu\text{ and }\lim_{n\to\infty} \epsilon_n\cdot\phi_n(\beta)=\mu'$$
exist in $\CM\CL(S)$. Note that $\mu$ has unit length and hence is not $0$. Note also that $\mu,\mu'\in\CM\CL^+(S)$ because $\alpha$ and $\beta$ are two-sided and because $\CM\CL^+(S)$ is closed by Scharlemann's theorem.

Note that, since the mapping classes $\phi_n$ do not repeat, there is some $\gamma_0$ with $\ell_{\phi_n(X)}(\gamma_0)\to\infty$. Equation \eqref{ratio} implies then that also 
\begin{equation}\label{eq12345}
\iota(\phi_n(\alpha), \gamma_0)+\iota(\phi_n(\beta), \gamma_0)\to\infty
\end{equation}.
On the other hand 
\begin{equation}\label{eq56788}
\lim_n\epsilon_n\cdot\big(\iota(\phi_n(\alpha), \gamma_0)+\iota(\phi_n(\beta), \gamma_0)\big)=\iota(\mu,\gamma_0)+\iota(\mu',\gamma_0)<\infty
\end{equation}.
Hence equations \eqref{eq12345} and \eqref{eq56788} together imply that $\epsilon_n\to 0$.

Once we know that $\epsilon_n$ tends to $0$ we have that 
$$\iota(\mu,\mu')=\lim_n\iota(\epsilon_n\cdot\phi_n(\alpha),\epsilon_n\cdot\phi_n(\beta))=\lim\epsilon_n^2\cdot\iota(\alpha,\beta)=0$$
and hence that $\lambda'=\mu+\mu'$ is a perfectly sound measured lamination with 
\begin{equation}\label{eq tired}
\lim_n\epsilon_n\cdot\big(\iota(\phi_n(\alpha), \gamma)+\iota(\phi_n(\beta), \gamma)\big)=\iota(\lambda,\gamma)
\end{equation}
for every curve $\gamma$. Moreover, since the support of $\lambda'$ is the union of the supports of $\mu$ and $\mu'$ we have $\lambda'\in\CM\CL^+(S)$. 

Recall now that $[\lambda]\in\CP\CM\CL(S)$ is an accumulation point of the sequence $(\phi_n(X))$. From the choice of the sequence $(\epsilon_n)$ and from \eqref{ratio} we get that for all $\gamma$ the sequence $(\epsilon_n\cdot\ell_{\phi_n(X)}(\gamma))_n$ is bounded. This means that, once again up to passing to a subsequence, we might assume that the projective class $[\lambda]=\lim_n\phi_n(X)\in\overline\CT(S)$ has a representative $\lambda\in\CM\CL(S)$ with 
\begin{equation}\label{eq tired2}
\lim_n\epsilon_n\cdot\ell_{\phi_n(X)}(\gamma)=\iota(\lambda,\gamma)
\end{equation}
Combining \eqref{eq tired} and \eqref{eq tired2} we get from \eqref{ratio} that
$$\iota(\lambda,\gamma)\le \iota(K\cdot\lambda',\gamma)$$
for every curve $\gamma$. Lemma \ref{lemma support} implies thus that
$$\supp(\lambda)\subset\supp(\lambda').$$
Since $\lambda'\in\CM\CL^+(S)$ we get that also $\lambda\in\CM\CL^+(S)$, and hence that $[\lambda]=\lim_n\phi_n(X)\in\CP\CM\CL^+(S)$, as we needed to prove. We are done.
\end{proof}

\begin{appendix}

\section{Existence of uniform train tracks}\label{appendix}

In this appendix we prove Proposition \ref{prop-useful traintrack}, which states that any train track carrying a lamination without closed leaves can be refined to be arbitrarily long while also uniform. We recall the precise statement: 

\begin{named}{Proposition \ref{prop-useful traintrack}}
  Let $S$ be a hyperbolic surface. There is a constant $C>0$ such that for any geodesic lamination $\lambda\subset S$ without closed leaves, any train track $\tau_0$ carrying $\lambda$, and any $L$ large enough, there is a refinement $\tau$ of $\tau_0$ which is $\lambda$-generic, $(C,\lambda)$-uniform and satisfies $\ell^{\lambda}(\tau)\ge L$. 
  \end{named}

The reader is referred to Section \ref{sec:uniform train tracks} for the terminology and notation used in this appendix. In particular we remind the reader that $\tau$ is $(C, \lambda)$-uniform if it is almost geodesic, carries $\lambda$ in a filling way, and the $\lambda$-lengths of all edges are comparable: $\ell^{\lambda}(\tau)\leq C\cdot m^{\lambda}(\tau)$.

Note that if $\tau_0$ is any train track carrying the lamination $\lambda$, we can refine it to an almost geodesic (and as long as we want) train track $\tau_0'$, still carrying $\lambda$ (using, for example, the Thurston's construction mentioned in the remark in Section  \ref{sec uniform}). Since a refinement of a refinement of $\tau_0$ is still a refinement of $\tau_0$, we can without loss of generality assume that $\tau_0$ was already almost geodesic to begin with.  

The idea of the proof of the proposition is to modify the train track $\tau_0$ according to a series of procedures which increase the length of a short edge while controlling the total increase of length. Since the general idea is quite simple, but making it precise is tedious, we will first give an intuitive idea of the process. For simplicity assume $\tau_0$ is trivalent and has a unique shortest edge $e$. We want to refine $\tau_0$ such that this short edge disappears by getting concatenated with other edges, while only increasing the total length of the train track by a prescribed amount. There are two cases. If $e$ is the only incoming edge at vertex $v$, then there is a cusp at $v$ bounded by the two outgoing edges. We ``split" this cusp by zipping open $e$ until its other vertex: see the top part of Figure \ref{fig-first example of split}. In the resulting train track the short edge has been absorbed into the adjacent ones and the total length has increased by $\ell^{\lambda}(e)=m^{\lambda}(\tau_0)$. In the other case $e$ is one of two incoming edges at $v$. We again want to split the cusp at $v$ but this now means unzipping the outgoing edge $e'$ which could be very long. Hence, in order to control how much total length we add, if $e'$ is very long we do not split the cusp all the way to the next vertex but to some predetermined distance (say twice the minimal edge length). Formally we will do this by adding a bivalent vertex $v'$ on $e'$ at that distance and split until $v'$. See bottom part of Figure \ref{fig-first example of split}. Note that in both situations, in the local picture around $v$, we have not only gotten rid of the short edge $e$, but the two new edges have lengths greater than $2\ell^{\lambda}(e)$. One can then repeat this process finitely many times obtaining a refinement where the shortest edge length has doubled, while the length has only increased by a comparatively small amount. \\

\begin{figure}[h]
\leavevmode \SetLabels
\L(.29*.37) $v$\\%
\L(.26*.52) $e$\\%
\L(.48*.48) $\to$\\%
\endSetLabels
\begin{center}
\AffixLabels{\centerline{\includegraphics[width=0.7\textwidth]{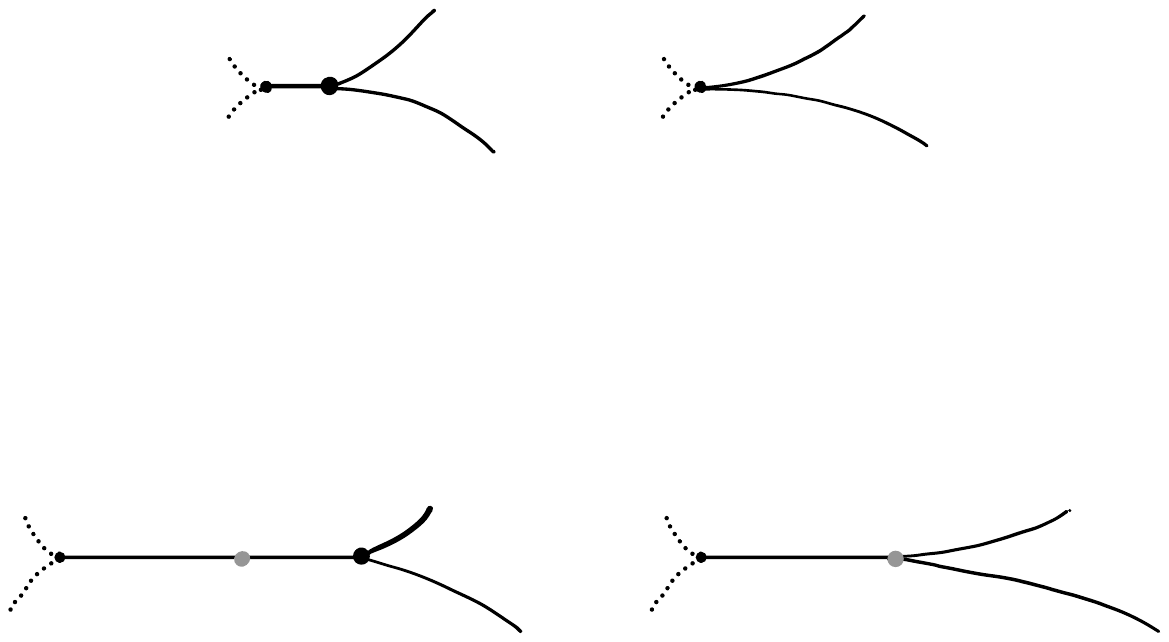}}}
\vspace{-24pt}
\end{center}
\leavevmode \SetLabels
\L(.365*.43) $v$\\%
\L(.39*.68) $e$\\%
\L(.48*.48) $\to$\\%
\L(.3*.4) $v'$\\%
\L(.25*.6) $e'$\\%
\endSetLabels
\begin{center}
\AffixLabels{\centerline{\includegraphics[width=0.7\textwidth]{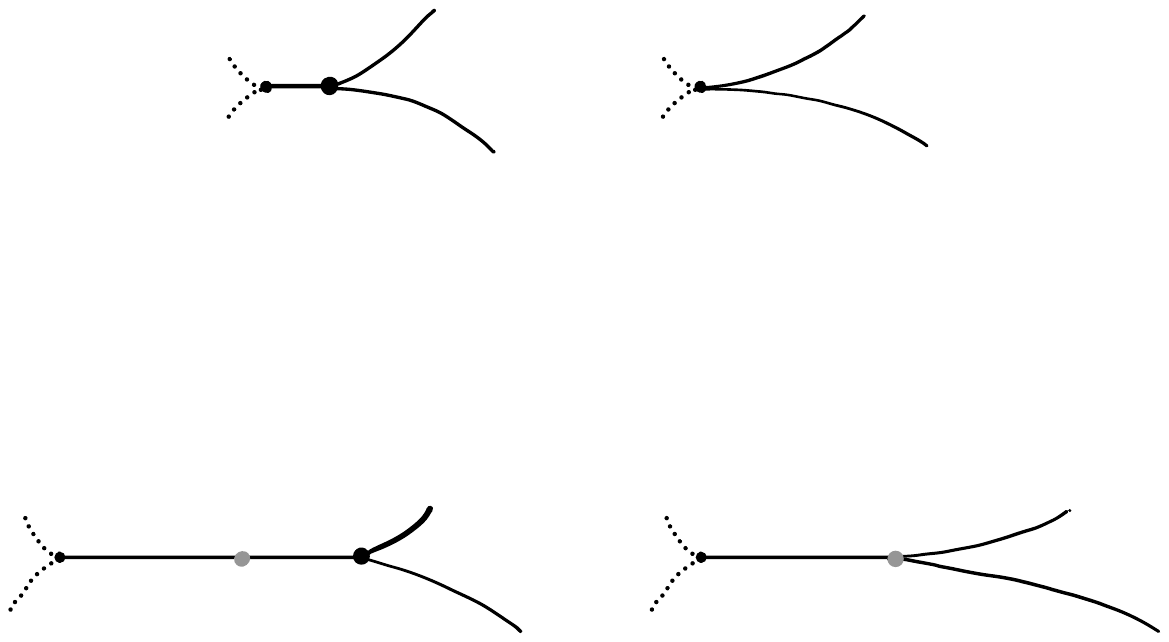}}}
\vspace{-24pt}
\end{center}
\caption{Splitting a cusp at $v$ to remove the short edge $e$.}
\label{fig-first example of split}
\end{figure}

Now we will make all of this precise---the basic issue being that the process we described above works well if the involved train tracks are trivalent and that this property might get lost after even the first step. Anyways, recall from Section \ref{sec uniform2} the definition of a {\em refinement}. In this section, to give a refinement of a train track $\tau$, we will give a smooth graph $\tau'$ and a smooth immersion $\tau'\looparrowright\tau$ which can be perturbed into an embedding $\tau'\hookrightarrow S$. To be fair, we are just going to give the graph $\tau'$ and a map $\Phi=\Phi^{\tau',\tau}: \tau'\to \tau$. It will be evident that a smooth structure exists on $\tau'$ which makes $\Phi$ a smooth immersion---note that if such a structure exists, it is unique. Finally, since we are thinking of $\tau'$ in terms of an immersion into $\tau$, we identify the carrying maps of $\lambda$ into $\tau$ and $\tau'$ respectively. Accordingly, we use lengths in $\tau$ to measure the length and $\lambda$-length of edges of $\tau'$. All of this will not pose a problem because all relevant inequalities here will be strict, meaning that if they are satisfied for length defined in this way, they also hold after perturbing $\Phi$ into an embedding. Along the same lines, the condition for being almost geodesic is open and this means that it makes sense to say that $\tau'$ is almost geodesic. 


A final piece of terminology: we say that $\tau'$ is a {\em simplicial refinement} if the map $\Phi$ is a simplicial map, that is, maps vertices to vertices and edges to edges.

\medskip 

Now we define more rigorously what we mean by the process of splitting a cusp alluded to above, and in such a way that it also works for vertices of higher valence. 
Let $e$ be a (half-)edge which is not the only incoming one at a vertex $v$. Then there is a (either one or two) complementary region $F$ of $S\setminus\tau$ having a cusp at $v$ and such that $e$ is contained in a side of $F$; we call each such cusp an {\em adjacent cusp at} $v$ (with respect to $e$). Moreover, if $e'$ is an outgoing half-edge at $v$ and $\{e, e'\}$ is a $\lambda$-legal turn we say that $e'$ is a {\em $\lambda$-child} of $e$. With this terminology and with the above intuitive idea in mind, we define what we mean by splitting a cusp:

\medskip

{\bf Splitting a cusp.}  Let $F$ be a complementary region of $S\setminus\tau$ with vertex $v_0$ a cusp of $F$ and $e_0, e_1$ the two half-edges of $\tau$ incident to $v_0$ for which $v_0$ is an adjacent cusp. If $e_0$ and $e_1$ have a common $\lambda$-child denote it by $\hat{e}$. {\em Splitting the cusp $v_0$} is the process which produces the simplicial refinement $\tau'$ (possibly with bivalent vertices) of $\tau$ such that 
$$\vert\Phi^{-1}(v)\vert = 1 \text{ for all } v\neq v_0 \text{ and } \vert\Phi^{-1}(v_0)\vert = 2$$
and 
$$\vert\Phi^{-1}(e)\vert = 1 \text{ for all } e\neq \hat{e} \text{ and } \vert\Phi^{-1}(\hat{e})\vert = 2 \text{ (if $\hat{e}$ exists).}$$
Note that for every cusp of $\tau$ we split, we add the length of a common child to the total length of the resulting train track. That is, if $\tau'$ is a refinement resulting from splitting one cusp then we have
$$\ell^{\lambda}(\tau') =  \ell^{\lambda}(\tau) + \ell^{\lambda}(\hat{e}).$$  
See Figure \ref{pic splitting cusp} for some examples of splitting cusps. \\

\begin{figure}[h]
\includegraphics[width=1\textwidth]{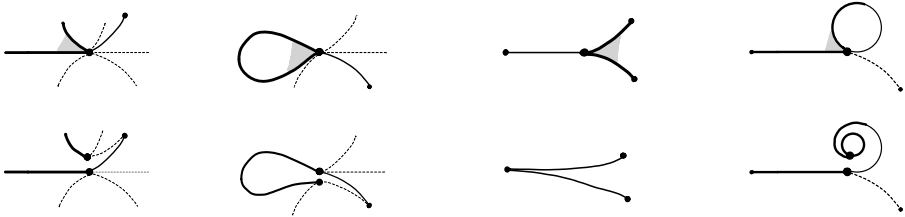}
\caption{Examples of the process of splitting a cusp. The top row are local pictures of a train track with a cusp shaded, the corresponding two half-edges in bold, and the half-edges which are not a common child dashed. The bottom row is the result of splitting the cusp.}
\label{pic splitting cusp}
\end{figure}
\medskip

\noindent Using this tool we define a few processes resulting in refinements. Recall that we say an edge $e$ is {\em $\lambda$-loopy} if it has a single vertex and $(e^+, e^-)$ is a $\lambda$-legal turn, where $e^+, e^-$ are the two half-edges of $e$. If $e$ is an edge with only one vertex and such that one half-edge is incoming and the other outgoing at that vertex but which is not $\lambda$-loopy (i.e. no leaf of $\lambda$ takes that turn) we call it a {\em fake loopy} edge. A {\em non-loopy} edge is one that is neither loopy nor fake loopy (that is, a non-loopy edge is one which is not an embedded circle).
\medskip

{\bf Combing a half-edge.} Let $e_0$ be a half-edge which is part of a non-loopy edge, and let $v_0$ be the vertex adjacent to $e_0$. We {\em comb} $e_0$ through the following steps: 
\begin{enumerate}
\item Split any adjacent cusps and set $e_0'=(\Phi^{\hat\tau,\tau})^{-1}(e_0)$ where $\hat\tau$ is the resulting simplicial refinement, and let $v_0'\in (\Phi^{\hat\tau,\tau})^{-1}(v_0)$ be the vertex adjacent to $e_0'$
\item Split all cusps at  $v_0'$.  
\item Remove any bivalent vertices.
\end{enumerate}

Letting $\tau'$ be the resulting refinement of $\tau$, note that $\Phi(e)=\Phi^{\tau',\tau}(e)$ is a concatenation of edges of $\tau$ for every edge $e$ of $\tau'$. In particular we have that any edge $e$ of $\tau'$ such that $\Phi(e)$ intersects the interior of $e_0$ satisfies 
$$\ell^{\lambda}(e) = \ell^{\lambda}(e_0)+\ell^{\lambda}(e')$$
for some child $e'$ of $e_0$. See Figure \ref{pic combing} for an example of combing an edge. 

\begin{figure}[h]
\includegraphics[width=1\textwidth]{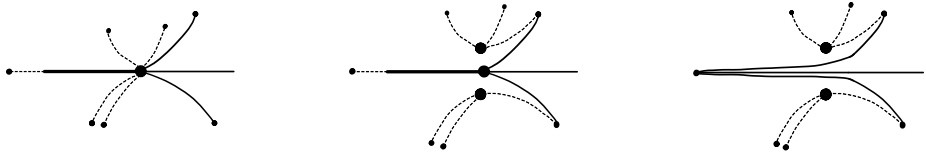}
\caption{Examples of combing a half-edge. Left: local picture with the half-edge to be combed in bold and its children represented by the solid lines. Middle: result after splitting the adjacent cusps. Right: End result, after splitting the remaining cusps and removing bivalent vertices.}
\label{pic combing}
\end{figure}

\medskip 

{\bf Unmasking a fake loopy edge.} Suppose $e_0$ is a fake loopy edge with vertex $v_0$ in $\tau$. There are two cases: either there is a $\lambda$-legal turn $\{e_1, e_2\}$ which crosses $e_0$ or there is not. In the second case, we simply replace $v_0$ with two vertices $v_0'$, $v_0''$ and replace $e_0$ with an edge connecting $v_0'$ and $v_0''$ such that the resulting train track still carries $\lambda$; see top row in Figure \ref{pic winding0}. In the second case, we do the same but connect $v_0'$ and $v_0''$ with an additional edge $e$ carrying segments that took the $\{e_1, e_2\}$ turn and then we comb $e$ (and finally we delete any bivalent vertices). The bottom row of Figure \ref{pic winding0} for an example. Note that the length $\ell^{\lambda}(\tau')$ of resulting refinement $\tau'$ either is the same as that of $\tau$ or 
$$\ell^{\lambda}(\tau') = \ell^{\lambda}(\tau)+\ell^{\lambda}(e_1)+\ell^{\lambda}(e_2)$$
where $e_1, e_2$ are two edges adjacent to $v_0$. 

\bigskip

Finally we consider the actual loopy edges. If $e$ is a $\lambda$-loopy edge and $e^+, e^-$ are its two half-edges, for every leaf $l$ of $\lambda$ for which $\Phi^{\lambda, \tau}(l)$ traverses $e$, $\Phi^{\lambda, \tau}(l)$ must take the turn $\{e^-, e^+\}$ a number of times. We define the {\em $\lambda$-winding number} of $e$ to be $w=k-1$ where $k$ is the largest integer such that there is a leaf of $\lambda$ which takes the turn $\{e^-,e^+\}$ $k$ times. Note that if $e$ has $\lambda$--winding number $w$, then every leaf $l$ of $\lambda$ such that $\Phi^{\lambda, \tau}(l)$ transverses $e$ takes the turn $\{e^-,e^+\}$ either $w$ or $w+1$ times. 

\begin{figure}[h]
\includegraphics[width=.8\textwidth]{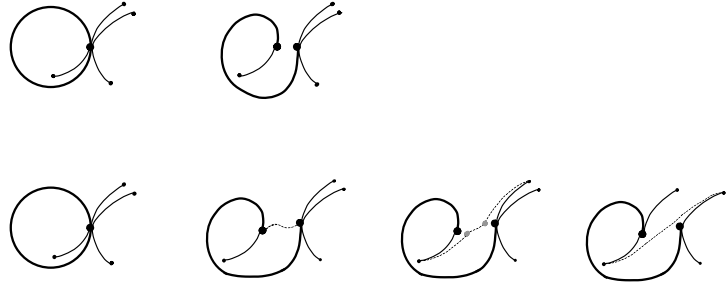}
\caption{The two possibilities of unmasking a fake loopy edge.}
\label{pic winding0}
\end{figure}

If the winding number $w$ is not too large, it will be helpful below to replace a loopy edge $e_0$ with a concatenation of $w$ edges  by {\em unlooping} it according to the following process: \\


{\bf Unlooping a loopy edge.} Suppose $e_0$ is a loopy edge with vertex $v_0$. We will construct a refinement $\tau'$ of $\tau$ such that $\Phi^{-1}(e_0)$ consists of (one or two) non-loopy edges.  Choose an incoming half-edge $e_0^+$ of $e_0$, and denote the other half-edge by $e_0^-$. There is an edge $e$ incoming at $v_0$ and adjacent to $e_0^+$ such that $(e, e^-_0)$ is a $\lambda$-legal turn. Split the cusp $v_0$ adjacent to $e_0^+$ and $e$. This results in a simplicial refinement $\tau'$ of $\tau$ with
$$\ell^{\lambda,\tau'}(e_0)=\ell^{\lambda,\tau}(e_0)-\ell(e_0).$$
Note that at this point we have not added any $\lambda$-length to the train track. Now, delete any bivalent vertices and continue inductively until we arrive at a refinement where the edge corresponding to $e_0$ is a fake loopy edge. We complete the process by unmasking it and deleting any bivalent vertices. The length of the refined track track is related to the length of $\tau$ just like in the case of unmasking fake loopy edges. 

Intuitively, the reader should picture the loopy edge $e$ having been unwrapped into a non-loopy edge of length approximately $\ell^{\tau}(e)$ and generically also a second non-loopy edge obtained through the unmasking in the last step.  See Figure \ref{pic windingk}.


\begin{figure}[h]
\includegraphics[width=.5\textwidth]{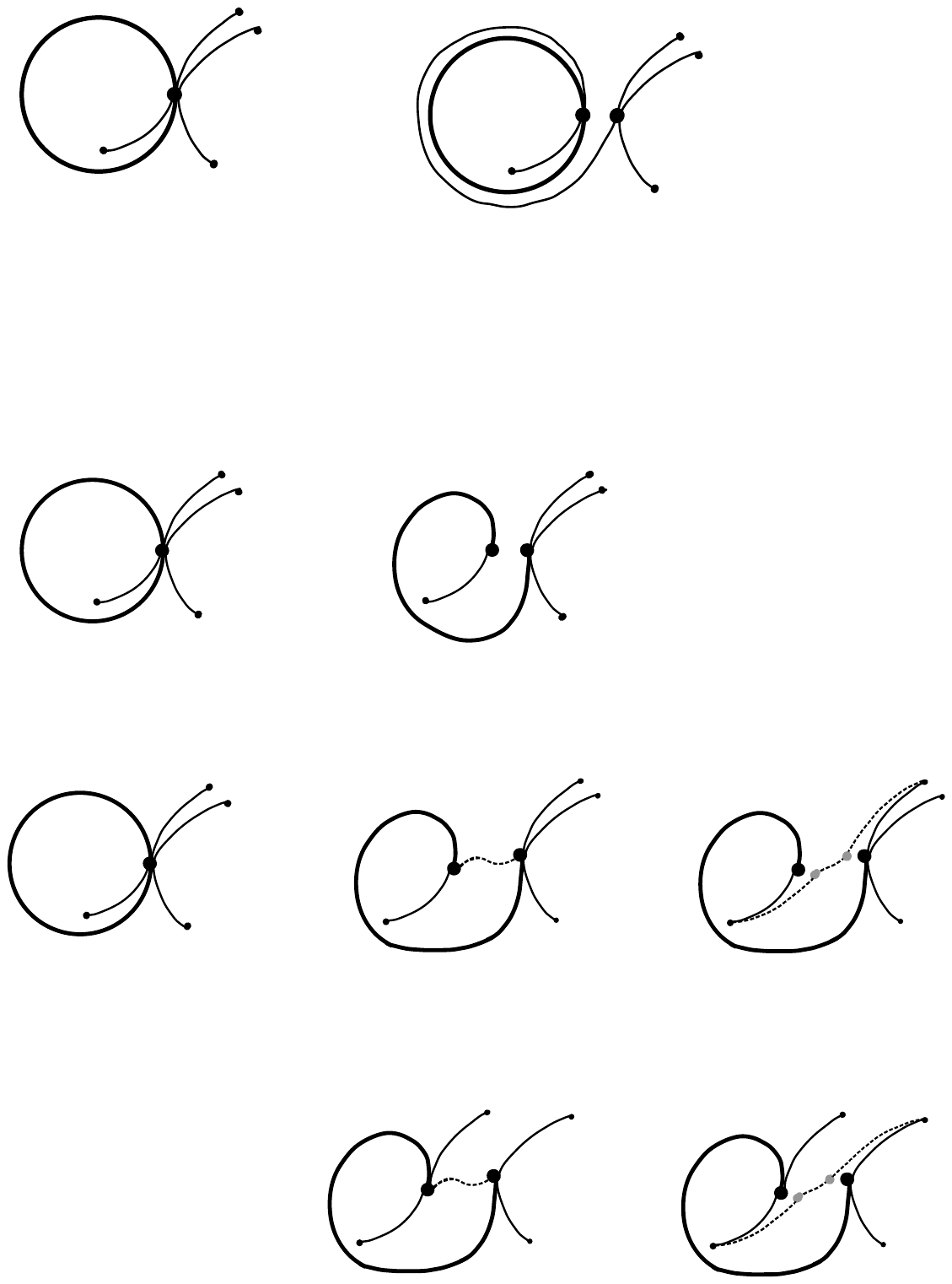}
\caption{The first step in unlooping a loopy edge.}
\label{pic windingk}
\end{figure}

\medskip

We will apply the above processes to define our main procedure. As outlined in the beginning of this appendix, the idea is to modify a given train track trading any short edges for longer ones while controlling the total length of the resulting train track.

\medskip

{\bf Main Procedure.} Let $\tau_0$ be any (almost geodesic) train track carrying $\lambda$ and recall that $\ell^{\lambda}(e)=\ell^{\lambda,\tau}(e)$, $\ell^{\lambda}(\tau)$ and $m^\lambda(\tau)$ were defined in \eqref{eq talked to Hugo1} and \eqref{eq talked to Hugo2}.

We start by adding bivalent vertices to any non-loopy edge $e$ with $\ell(e)>4m^{\lambda}(\tau_0)$ such that it gets subdivided into a number of edges each having length $2m^{\lambda}(\tau_0)$ except possibly one which has length at least $2m^{\lambda}(\tau_0)$ but less than $4m^{\lambda}(\tau_0)$. In the resulting train track unmask all fake loopy edges, and if this results in an edge of length greater than $4m^{\lambda}(\tau_0)$, then add bivalent vertices to subdivide it as above.  Finally, unloop any loopy edge $e$ with $\ell^{\lambda}(e)\leq 4m^{\lambda}(\tau_0)$. Denote the resulting refinement by $\hat{\tau}_0$. Note that $m^{\lambda}(\hat{\tau}_0) = m^{\lambda}(\tau_0)$, that $\hat{\tau}_0$ has the same number of edges of length less than $2m^{\lambda}(\tau_0)$ as $\tau_0$ does, and that for each (fake) loopy edge we have (unmasked or) unlooped we have increased the total length by at most $8m^{\lambda}(\tau_0)$ and hence  $\ell^{\lambda}(\hat{\tau}_0)\leq\ell^{\lambda}(\tau_0)+9\vert\chi(S)\vert\cdot8m^{\lambda}(\tau_0)$ where the factor $9\vert\chi(S)\vert$ is a (very non-optimal) bound on how many loopy edges $\tau_0$ can have.

Now let $e_0$ be an edge realizing $m^{\lambda}(\tau_0)$ in $\hat{\tau}_0$ and note that it is non-loopy and any adjacent edge has length at most $4m^{\lambda}(\tau_0)$ and at least $m^{\lambda}(\tau_0)$. Let $e^+_0$ be a half-edge of $e_0$ and $v_0$ its vertex. Combing $e^+_0$ (and then removing all remaining bivalent vertices) we obtain a refinement $\tau_0^1$ of $\tau_0$ carrying $\lambda$. Note that the number of edges $e$ in $\tau_0^1$ with $\ell^{\lambda}(e)=m^{\lambda}(\tau_0)$ is strictly less than the number of edges $e$ in $\tau_0$ with $\ell^{\lambda}(e)=m^{\lambda}(\tau_0)$, that the number of edges $e$ in $\tau_0^1$ with $\ell^{\lambda}(e)<2m^{\lambda}(\tau_0)$ is strictly less than the number of edges $e$ in $\tau_0$ with $\ell^{\lambda}(e)<2m^{\lambda}(\tau_0)$, and that $\ell^{\lambda}(\tau_0^1)\geq\ell^{\lambda}(\tau_0)$. Intuitively, the reader can think of any edge in $\tau_0^1$ as corresponding to a concatenation of edges in $\hat{\tau}_0$ and any edge whose image contains $e_0$ is the concatenation of at least two and hence has length at least $2m^{\lambda}(\tau_0)$. 
Now, since combing an edge means splitting cusps and there are at most $6\vert\chi(S)\vert$ cusps in $S\setminus \hat{\tau}_0$ we have $\ell^{\lambda}(\tau_0^1)\leq\ell^{\lambda}(\hat{\tau}_0)+6\vert\chi(S)\vert\cdot4m^{\lambda}(\tau_0)\leq \ell^{\lambda}(\tau_0)+96\vert\chi(S)\vert\cdot m^{\lambda}(\tau_0)$. 

Now, if $m^{\lambda}(\tau_0^1)<2m^{\lambda}(\tau_0)$ we repeat the above process, replacing $\tau_0$ with $\tau_0^1$. We continue inductively as long as $m^{\lambda}(\tau_0^n)<2m^{\lambda}(\tau_0)$ where $\tau_0^n$ denotes the refinement resulting from the $n^{th}$ step. Note that in each step the number of edges of length less than $2m^{\lambda}(\tau_0)$ decreases and hence since there are finitely many edges, the procedure ends in finitely many steps. That is, there is a $k\leq9\vert\chi(S)\vert$ such that $m^{\lambda}(\tau_0^k)\geq2m^{\lambda}(\tau_0)$; we set $\tau_0^k:=\tau_1$.  Note that we have $\ell^{\lambda}(\tau_0)<\ell^{\lambda}(\tau_1)< \ell^{\lambda}(\tau_0)+1000\vert\chi(S)\vert\cdot m^{\lambda}(\tau_0)$.\\ 
\medskip

\noindent We are finally ready to prove the proposition: 

\begin{proof}[Proof of Proposition \ref{prop-useful traintrack}]
Let $\lambda$ be a geodesic lamination without closed leaves and $\tau_0$ any train track carrying it. Let $L\geq\ell^{\lambda}(\tau_0)$. As already mentioned after the statement of the proposition, up to replacing $\tau_0$ by a refinement, we can assume that $\tau_0$ is an almost geodesic train track and $\ell^{\lambda}(\tau_0)\geq L$. 

Apply the Main Procedure to $\tau_0$ resulting in the refinement $\tau_1$ where $m^{\lambda}(\tau_1)\geq2m^{\lambda}(\tau_0)$ and with $\ell^{\lambda}(\tau_1)< \ell^{\lambda}(\tau_0) + 1000\vert\chi(S)\vert\cdot m^{\lambda}(\tau_0)$. Continuing inductively, we obtain the refinement $t_k$ at the $k^{th}$ step satisfying 
$$m^{\lambda}(\tau_k)\geq 2^km^{\lambda}(\tau_0)$$
and 
\begin{align*}
L<\ell^{\lambda}(\tau_k)&< \ell^{\lambda}(\tau_0) + 1000\vert\chi(S)\vert\cdot\sum_{i=0}^{k-1}m^{\lambda}(\tau_i)\\
&< \ell^{\lambda}(\tau_0) + 1000\vert\chi(S)\vert\cdot m^{\lambda}(\tau_k)\cdot\sum_{i=1}^{k}2^{-i}\\
&< \ell^{\lambda}(\tau_0) + 1000\vert\chi(S)\vert\cdot m^{\lambda}(\tau_k).
\end{align*} 
It follows that for any $C>1000\vert\chi(S)\vert$ there exists a $k$ such that $\tau_k$ is $C$-uniform. 

It remains to show that there is a {\em generic} such train track. Note that by choosing $k$ large enough above, we can assume $\tau_k$ is uniform and all its edges having enormous $\lambda$-lengths since $m^{\lambda}(\tau_k)$ grows exponentially with $k$. The idea is then to add more vertices along these very long edges and spread out the edges so that every vertex is trivalent at the non-loopy edges, and to partly unwrap any loopy edge so that its vertex is 4-valent. We sketch this process and leave the precise details to the reader; in particular, for simplicity we assume there are no loopy edges. Note that there are at most $E=18\cdot\vert\chi(S)\vert$ half-edges in any train track and hence any vertex can be at most $E$-valent. Choose $k$ large enough so that every edge in the $C$-uniform train track $\tau_k$ has length at least $2^E\cdot L$. Let $v$ be a vertex which has maximal valence in $\tau_k$, which we assume is greater than 3. Add a bivalent vertex at the midpoint of every edge incident at $v$, then split a (any) cusp at $v$, and finally delete any remaining bivalent vertex. The maximal valency of a vertex in the resulting refinement is either strictly less than in $\tau_k$ or it remains the same but has one less vertex of this valency. Moreover, every edge has length at least $2^{E-1}\cdot L$ and the total length has increased by at most $\ell^{\lambda}(\tau_k)/2$. We repeat the process until (in finitely many steps $n\leq E$) every vertex is trivalent. The resulting refinement $\tau$ satisfy $m^{\lambda}(\tau) \geq m^{\lambda}(\tau_k)/2^n$ and $\ell^{\lambda}(\tau)\leq \ell^{\lambda}(\tau_k) + \ell^{\lambda}(\tau_k)\cdot\sum_{i=1}^n 2^{-i}< 2\ell^{\lambda}(\tau_k)$. It follows that $\tau$ is generic and $C'$-uniform for $C'=2^{E+1}\cdot C$.  

\end{proof}
  
\end{appendix}

\bibliographystyle{plain}
\bibliography{ref-nonorientable}

\end{document}